\def\thanks#1{\protected@xdef\@thanks{\@thanks
        \protect\footnotetext{#1}}}
\numberwithin{equation}{section}
\newtheorem{thm}{Theorem}[section]
\newtheorem{lem}{Lemma}[section]
\newtheorem{rem}{Remark}[section]
\begin{document}
\title{\bf A priori error estimates for stable generalized finite element discretization of parabolic interface optimal control problems}
\author{Xindan Zhang$^{1}$, Jianping Zhao$^{1,2,*}$, Yanren Hou$^{1,3}$\\
\small \emph{$^{1}$ College of Mathematics and System Science, Xinjiang University, Urumqi 830046, China}\\
\small \emph{$^{2}$ Institute of Mathematics and Physics, Xinjiang University, Urumqi 830046, China}\\
%\small $^{2}$ LSEC, Institute of Computational Mathematics, Academy of Mathematics
% and Systems\\
%\small Science, Chinese Academy of Sciences \& School of Mathematical Sciences, University of\\
%\small Chinese Academy of Sciences, Beijing 100190, China\\
\small \emph{$^{3}$ School of Mathematics and Statistics, Xi'an Jiaotong University, Xi'an 710049, China}\\}
\thanks{$^{*}$ Corresponding author. E-mail: zhaojianping@126.com}
\date{}
\maketitle \baselineskip 18pt
\begin{minipage}{150mm}{\small{\bf Abstract}
In this paper, we investigate optimal control problems governed by the parabolic interface equation, in which the control acts on the interface. The solution to this problem exhibits low global regularity due to the jump of the coefficient across the interface and the control acting on the interface. Consequently, the traditional finite element method fails to achieve optimal convergence rates when using a uniform mesh. To discretize the problem, we use fully discrete approximations based on the stable generalized finite element method for spatial discretization and the backward Euler scheme for temporal discretization, as well as variational discretization for the control variable. We prove a priori error estimates for the control, state, and adjoint state. Numerical examples are provided to support the theoretical findings.

{\bf Key words:} optimal control problem, parabolic interface equation, variational discretization, stable generalized finite element method}
\end{minipage}
\maketitle
\section{\bf Introduction}
In this paper, we consider the following optimal control problem:
\begin{equation}\label{s11}
\min\limits_{u\in U_{ad}} J(y,u)=\frac{1}{2}\int_{0}^{T}\int_{\Omega} (y-y_{d})^{2}dxdt+\frac{\alpha}{2}\int_{0}^{T}\int_{\Gamma} u^{2}ds dt
\end{equation}
subject to
\begin{equation}\label{s12}
\left\{\begin{array}{ll}
 ~y_{t}-\nabla\cdot(\beta\nabla y)  = f, & ~\text{in}~ \Omega\setminus\Gamma\times(0,T),  \\
 ~[y]_\Gamma=0,~[\beta\partial_{\mathbf{n}}y]_{\Gamma}=g+u, &~\text{on}~\Gamma\times (0,T),\\
 ~y=0,&~\text{on}~ \partial\Omega\times(0,T),\\
 ~y(0)=y_{0},& ~\text{in}~ \Omega,
\end{array}\right.
\end{equation}
where  $\Omega \subset \mathbb{R}^2$ is a bounded, convex open domain with Lipschitz continuous boundary separated by an $\mathcal{C}^2$ interface $\Gamma$. We assume that the interface $\Gamma$ divides the domain $\Omega$ into two subdomains $\Omega^{+}$ and $\Omega^{-}$, and $\Omega^{-}$ lies strictly inside $\Omega$ (see Fig. 1). The symbol $[v]_{\Gamma}=v^{-}|_{\Gamma}-v^{+}|_{\Gamma}$ denotes the jump of the function $v$ across the interface $\Gamma$ and the operator $\partial_{\mathbf{n}}$ denotes the normal derivative on $\Gamma $, i.e., $\partial_{\mathbf{n}}y=\mathbf{n} \cdot\nabla y$, where $\mathbf{n}$ is the unit normal direction of  $\Gamma$ pointing to $\Omega^{+}$. Let $\beta$ be a piecewise positive constant function given by
\begin{equation}\label{s13}
\beta =
\left\{\begin{array}{ll}
\beta^{+}, \qquad \text{in } \Omega^{+} ,\\
\beta^{-}, \qquad \text{in } \Omega^{-} .
\end{array}\right.
\end{equation}

The admissible controls set is given by
$$ U_{ad}=\{u\in L^{2}(0,T;L^{2}(\Gamma)):~u_{a}\leq u(x,t)\leq u_{b}, ~a.e. ~\text{on}~\Gamma\times(0,T)\},$$
where $u_{a} \leq u_{b}$. %~a.e. ~\text{on}~\Gamma\times(0,T)$.
\begin{figure}[H]
\centering
	\begin{subfigure}{0.5\linewidth}
		\centering
		\includegraphics[width=2.5in,height=1.8in]{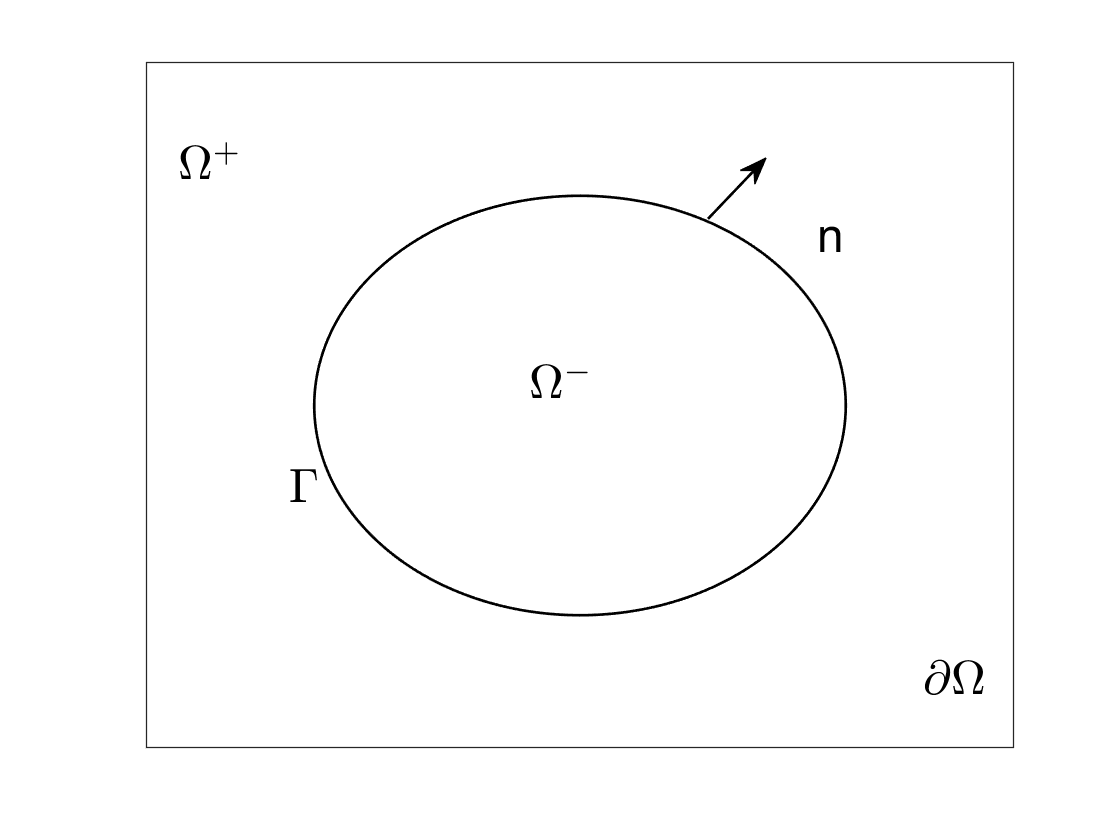}
	\end{subfigure}
 \caption*{Fig. 1. A geometry shape for the interface problem.}
   \label{interface}
\end{figure}
The optimal control of partial differential equations (PDEs) with interfaces plays a crucial role in various applications, including composite materials\cite{zhang2020}, crystal growth\cite{Meyer2006,Meyer2009}, and tumor growth\cite{Sprekels2021}. There are already many studies on numerical methods for elliptic interface optimal control problems, including the immersed finite element method\cite{Zhang2015,Gong2020,Wang2022}, the Nitsche-eXtended finite element method \cite{Wang2019}. In addition, the immersed finite element method was applied to parabolic optimal control problems with interfaces in \cite{Zhang20201}.

However, all of the work mentioned above has focused on distributed control, with few published results on the topic of interface control. This is because interface control is more complex than the case of distributed control both in theoretical analysis and numerical approximation. The authors of \cite{Wachsmuth2016} investigated $hp$-finite elements for elliptic interface optimal control problems with the control acting on the interface. The error estimates of order $O(h^{\frac{3}{2}})$ and $O(h)$ under different regularity assumptions for such problems were derived in \cite{Yang2018}. The authors in \cite{Su2023} employed the immersed finite element method to solve this type of problem and conducted a large number of numerical experiments to verify the effectiveness of this numerical method. In \cite{Lai2025}, the authors proposed the hard-constraint PINNs method for solving optimal control problems subject to PDEs with interfaces and control constraints, and performed extensive tests on various elliptic and parabolic interface optimal control problems to verify the effectiveness of the proposed methods. However, to the best of our knowledge, these references mainly focus on elliptic problems and seem to have made no contribution to the parabolic problems.

The main goal of this paper is to analyze the stable generalized finite element approximation of interface parabolic optimal control problems with the control acting on the interface. The main difficulty is the low regularity of the state variable on the whole domain caused by the fact that the jump of the coefficient across the interface and the control act on the interface. We use the stable generalized finite element for the space discretization of the state, while the backward Euler scheme is used for time discretization. For the discretization of the control variable, we use the variational discretization approach (see \cite{Hinze2005}). We derive a priori error estimates for the control, state, and adjoint state and then use numerical experiments to support our theoretical results.

The rest of the paper is organized as follows. In Section 2 we discuss the optimality conditions for the control problem and the corresponding regularity results. In Section 3 we present the discretization of the optimal control problem based on the variational discretization approach and the stable generalized finite element method. In Section 4, we derive an a priori error estimates for the control, state, and adjoint state. Finally, in Section 5 we provide some numerical examples to support our theoretical results.
\section{\bf Optimality system}
For $m\geq0$ and $1\leq q\leq \infty$, we denote the usual Sobolev space by $W^{m,q}(\Omega)$ with norm $\parallel \cdot\parallel_{m,q,\Omega} $ and semi-norm $\mid\cdot\mid_{m,q,\Omega}$. In particular, for $q=2$ we denote $H^{m}(\Omega)=W^{m,2}(\Omega)$ and $\|\cdot\|_{m,\Omega}=\|\cdot\|_{m,2,\Omega}$. Note that $L^{2}(\Omega)=H^{0}(\Omega)$ and $H_{0}^{m}(\Omega)=\{v\in H^{1}(\Omega): v=0 ~\text{on}~ \partial \Omega\}$.
\par
We denote by $L^{r}(0,T;W^{m,q}(\Omega))$ the Banach space of all $L^{r}$ integrable functions from $(0,T)$ to $W^{m,q}(\Omega)$ with the norm
$$\| v\|_{L^{r}(0,T;W^{m,q}(\Omega))}=\Big(\int_{0}^{T} \| v\|_{m,q,\Omega}^{r}dt\Big)^{\frac{1}{r}}\quad \mbox{for} \quad 1\leq r<\infty,$$
and standard modification for $r=\infty$. We denote inner products of the $L^{2}(\Omega)$ and $L^{2}(\Gamma)$ by
$$(v,w)=\int_{\Omega}vwdx\quad\forall~v,w\in L^{2}(\Omega)$$
and
$$\langle v,w\rangle_{\Gamma}=\int_{\Gamma}vwds\quad\forall~v,w\in L^{2}(\Gamma),$$
respectively.
For the subsequent analysis, we also need to define the following spaces:
$$W^{m,q}(\Omega^{+}\cup \Omega^{-})=\{v\in L^{2}(\Omega)|~v|_{\Omega^{+}}\in W^{m,q}(\Omega^{+}), v|_{\Omega^{-}}\in W^{m,q}(\Omega^{-}) \},$$
equipped with the norm
$$\|v\|_{m,q,\Omega^{+}\cup \Omega^{-}}=\|v\|_{m,q,\Omega^{-}}+\|v\|_{m,q,\Omega^{+}}. $$
%For the subsequent analysis, we also need to define the following spaces:
%$$X=H^{1}(\Omega)\cap H^{2}(\Omega^{-})\cap H^{2}(\Omega^{+})
%~\text{and}~
%Y=L^{2}(\Omega)\cap H^{1}(\Omega^{-})\cap H^{1}(\Omega^{+}) $$
%equipped with the norms
%$$\|v\|_{X}=\|v\|_{1,\Omega}+\|v\|_{2,\Omega^{-}}+\|v\|_{2,\Omega^{+}} $$
%and
%$$\|v\|_{Y}=\|v\|_{0,\Omega}+\|v\|_{1,\Omega^{-}}+\|v\|_{1,\Omega^{+}},$$
%respectively.

The following regularity result for the interface problem (\ref{s12}) can be found in, e.g. \cite{Chen1998,Sinha2007,Sinha2005,Huang2002}.
\begin{lem} %Assume that $f\in L^{2}(0,T;L^{2}(\Omega))$, $y_{0}\in H_{0}^{1}(\Omega)$ and $g+u\in L^{2}(0,T;L^{2}(\Gamma))$. Then there exists a unique solution $$y \in L^{2}(0,T;H^{\frac{3}{2}}(\Omega^{+})\cap H^{\frac{3}{2}}(\Omega^{-})\cap H^{1}(\Omega)).$$
%If additionally
Assume that $f\in H^{1}(0,T;L^{2}(\Omega))$, $y_{0}\in H_{0}^{1}(\Omega)$ and $g+u\in L^{2}(0,T;H^{\frac{1}{2}}(\Gamma))$. Then there exists a unique solution $$y \in L^{2}(0,T;H^{2}(\Omega^{+}\cup\Omega^{-}))\cap H^{1}(0,T;H^{1}(\Omega^{+}\cup\Omega^{-})).$$
\end{lem}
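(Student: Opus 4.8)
The plan is to obtain the weak solution by the Faedo--Galerkin method and then to bootstrap its regularity by combining parabolic energy estimates with the elliptic interface regularity theory of \cite{Chen1998,Huang2002} (see also \cite{Sinha2005,Sinha2007}). Writing $a(w,v)=(\beta\nabla w,\nabla v)$, which is bounded on $H_0^1(\Omega)$ and coercive there because $\beta\ge\min\{\beta^+,\beta^-\}>0$, the weak form of \eqref{s12} is to find $y$ with $y(0)=y_0$ and
\begin{equation*}
(y_t,v)+a(y,v)=(f,v)+\langle g+u,v\rangle_\Gamma\qquad\forall\,v\in H_0^1(\Omega),\ \text{a.e. }t\in(0,T).
\end{equation*}
Since $g+u\in L^2(0,T;H^{1/2}(\Gamma))\hookrightarrow L^2(0,T;L^2(\Gamma))$ and the trace operator $H_0^1(\Omega)\to L^2(\Gamma)$ is bounded, the right-hand side lies in $L^2(0,T;H^{-1}(\Omega))$; together with $y_0\in H_0^1(\Omega)\subset L^2(\Omega)$, the standard Galerkin construction (a priori bounds from testing the finite-dimensional system with $y_m$, then passage to the limit) yields a weak solution $y\in L^2(0,T;H_0^1(\Omega))\cap C([0,T];L^2(\Omega))$ with $y_t\in L^2(0,T;H^{-1}(\Omega))$, and uniqueness follows from coercivity of $a(\cdot,\cdot)$.

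To reach the asserted regularity I would first remove the nonhomogeneous co-normal jump by a lifting. For a.e.\ $t$ let $\phi(t)\in H_0^1(\Omega)$ solve the stationary interface problem $-\nabla\cdot(\beta\nabla\phi)=0$ in $\Omega\setminus\Gamma$, $[\phi]_\Gamma=0$, $[\beta\partial_{\mathbf n}\phi]_\Gamma=g+u$, $\phi=0$ on $\partial\Omega$; by the elliptic interface estimate of \cite{Chen1998,Huang2002} we have $\phi(t)\in H^2(\Omega^+\cup\Omega^-)\cap H_0^1(\Omega)$ with $\|\phi(t)\|_{2,\Omega^+\cup\Omega^-}\le C\|(g+u)(t)\|_{1/2,\Gamma}$, hence $\phi\in L^2(0,T;H^2(\Omega^+\cup\Omega^-)\cap H_0^1(\Omega))$. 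Using $a(\phi,v)=\langle g+u,v\rangle_\Gamma$, the difference $z=y-\phi$ solves the homogeneous-interface problem $(z_t,v)+a(z,v)=(f-\phi_t,v)$ with $z(0)=y_0-\phi(0)\in H_0^1(\Omega)$. The operator $A=-\nabla\cdot(\beta\nabla\cdot)$ with domain $D(A)=\{v\in H_0^1(\Omega)\cap H^2(\Omega^+\cup\Omega^-):[\beta\partial_{\mathbf n}v]_\Gamma=0\}$ is self-adjoint and positive on $L^2(\Omega)$ with $D(A^{1/2})=H_0^1(\Omega)$, so the classical maximal parabolic regularity estimate gives $z\in L^2(0,T;D(A))\cap H^1(0,T;L^2(\Omega))\cap C([0,T];H_0^1(\Omega))$; since moreover $f\in H^1(0,T;L^2(\Omega))$, differentiating the equation in time and repeating the estimate upgrades this to $z_t\in L^2(0,T;H_0^1(\Omega))$.

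It then remains to read off the spatial regularity of $y=z+\phi$ slice by slice: for a.e.\ $t$, $z(t)\in H_0^1(\Omega)$ solves the homogeneous-interface elliptic problem with right-hand side $f(t)-z_t(t)\in L^2(\Omega)$, so the elliptic interface regularity gives $\|z(t)\|_{2,\Omega^+\cup\Omega^-}\le C(\|f(t)\|_{0,\Omega}+\|z_t(t)\|_{0,\Omega})$; squaring, integrating over $(0,T)$, and adding the bound already available for $\phi$ gives $y\in L^2(0,T;H^2(\Omega^+\cup\Omega^-))$, while the time regularity of $z$ and $\phi$ — together with the fact that $[y]_\Gamma=0$ forces $[y_t]_\Gamma=0$, so that for $y_t$ the broken space $H^1(\Omega^+\cup\Omega^-)$ coincides with $H^1(\Omega)$ — gives $y\in H^1(0,T;H^1(\Omega^+\cup\Omega^-))$. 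I expect the crux of the argument to be the interface source term $\langle g+u,v\rangle_\Gamma$: in a direct energy estimate for $\partial_t y$ it cannot be absorbed, because a time derivative would have to fall on the trace of $y_t$, which is not controlled a priori; the lift $\phi$ circumvents this, but its effectiveness rests entirely on the \emph{piecewise} $H^2$ elliptic interface estimate of \cite{Chen1998,Huang2002} (classical elliptic regularity would only give $H^1$ on all of $\Omega$), and the remaining work is the standard parabolic bootstrap carried out consistently in the broken Sobolev spaces $H^s(\Omega^+\cup\Omega^-)$.
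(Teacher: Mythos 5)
The paper gives no proof of this lemma at all: it is quoted as a known regularity result with a pointer to \cite{Chen1998,Sinha2007,Sinha2005,Huang2002}, so the only question is whether your sketch is a correct self-contained argument under the stated hypotheses. Your strategy (Galerkin existence and uniqueness, a time-slice elliptic lifting $\phi(t)$ of the flux jump, then a parabolic bootstrap for $z=y-\phi$ using the piecewise $H^{2}$ elliptic interface estimate) is indeed the standard route taken in that literature, but as written it does not close with the data assumed in the lemma.

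The gap is exactly at the point you flag and then claim to have circumvented: the lifting merely moves the interface term into $\phi_t$. The equation for $z$ has right-hand side $f-\phi_t$, and your maximal-regularity step needs $f-\phi_t\in L^{2}(0,T;L^{2}(\Omega))$, while the subsequent time-differentiation needs control of $\phi_{tt}$ as well. Since $\phi(t)$ depends linearly and boundedly on $(g+u)(t)$, bounding $\phi_t$ requires $\partial_t(g+u)$ in some (negative-order) trace space; the lemma assumes only $g+u\in L^{2}(0,T;H^{\frac12}(\Gamma))$, with no time regularity, so $\phi_t$ is not controlled and the bootstrap fails at its first step. A direct energy estimate cannot rescue this, for the reason you yourself give. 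Moreover, even granting time regularity of $g+u$, the step ``differentiate the equation in time and repeat the estimate'' to get $z_t\in L^{2}(0,T;H_0^{1}(\Omega))$ needs the initial value $z_t(0)=f(0)-\phi_t(0)+\nabla\cdot(\beta\nabla z(0))$ to lie in $L^{2}(\Omega)$, which does not follow from $y_0\in H_0^{1}(\Omega)$ alone (one needs piecewise $H^{2}$ initial data with a compatible flux jump, or one only obtains weighted-in-time estimates that do not give $H^{1}(0,T;H^{1})$ up to $t=0$). The references cited by the paper prove the stated conclusion under correspondingly stronger assumptions (time differentiability of the interface datum and compatibility of $y_0$), which is precisely what your argument would also need; so, measured against the hypotheses exactly as stated, the proposal has a genuine gap rather than a complete proof.
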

Moreover, there holds the following maximal regularity result for the solution of this equation (see, e.g., \cite{Amann2021,Pruss2016}).
\begin{lem} Let $1<q<\infty$ with $q \notin \{3/2, 3\}$. Assume that $\Omega$ is a bounded domain with smooth boundary, let $f\in L^{q}(0,T;L^{q}(\Omega))$, $y_{0}\in B^{2-\frac{2}{q}}_{q,q}(\Omega^{+}\cup\Omega^{-})$ and $g+u\in L^{q}(0,T;W^{1-\frac{1}{q},q}(\Gamma)) \cap W^{\frac{1}{2}-\frac{1}{2q},q}(0,T;L^{q}(\Gamma))$. Suppose that the following compatibility conditions are satisfied:
\begin{equation*}
\left\{\begin{array}{ll}
~[\beta\partial_{\mathbf{n}}y_{0}]_{\Gamma}=g(0)+u(0), &~\text{if} ~q> 3,\\
~[y_{0}]_\Gamma=0,~y_{0}|_{\partial \Omega}=0, &~\text{if} ~q> 3/2.
\end{array}\right.
\end{equation*}
Then there exists a unique solution $$y \in L^{q}(0,T;W^{2,q}(\Omega^{+}\cup\Omega^{-}))\cap W^{1,q}(0,T;L^{q}(\Omega)).$$
\end{lem}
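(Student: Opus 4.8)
The plan is to treat the parabolic interface problem as an abstract evolution equation on the space $X = L^q(\Omega)$ and invoke the theory of maximal $L^q$-regularity for operators with bounded imaginary powers (or, equivalently, $\mathcal{R}$-sectoriality), following the framework in Prüss--Simonett \cite{Pruss2016} and Amann \cite{Amann2021}. First I would reformulate \eqref{s12}: define the realization $A$ of $-\nabla\cdot(\beta\nabla\,\cdot)$ on $X$ with domain consisting of functions $v\in W^{2,q}(\Omega^+\cup\Omega^-)$ satisfying the homogeneous transmission conditions $[v]_\Gamma=0$, $[\beta\partial_{\mathbf n}v]_\Gamma=0$ and $v|_{\partial\Omega}=0$, so that the boundary/interface data $g+u$ enter as an inhomogeneous jump condition. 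The standard procedure is to split $y = y_1 + y_2$, where $y_2$ solves a stationary (or parameter-dependent elliptic) transmission problem lifting the inhomogeneous interface data $g+u$ into the right regularity class, and $y_1$ solves the problem with homogeneous transmission conditions and a modified right-hand side in $L^q(0,T;L^q(\Omega))$; one then applies the abstract maximal-regularity theorem to $y_1$.

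The key steps, in order, are: (i) verify that $A$ generates an analytic semigroup on $X=L^q(\Omega)$ and is $\mathcal R$-sectorial of angle $<\pi/2$ — this is where the piecewise-constant coefficient $\beta$ and the $\mathcal C^2$ regularity of $\Gamma$ are used, via localization (flattening the interface), perturbation, and the known resolvent estimates for transmission problems; (ii) identify the real-interpolation trace space $(X, D(A))_{1-1/q,q} = B^{2-2/q}_{q,q}(\Omega^+\cup\Omega^-)$ (with the transmission conditions built in when $q$ is large enough to make them meaningful, which is exactly why the exceptional values $q\in\{3/2,3\}$ and the compatibility conditions appear); (iii) construct the lifting $y_2$ of the inhomogeneous jump data: this requires that $g+u\in L^q(0,T;W^{1-1/q,q}(\Gamma))\cap W^{1/2-1/(2q),q}(0,T;L^q(\Gamma))$, which is precisely the optimal trace space for the normal flux of a function in $L^q(0,T;W^{2,q})\cap W^{1,q}(0,T;L^q)$, so that $y_2$ lies in the desired class and $f - \partial_t y_2 + \nabla\cdot(\beta\nabla y_2)\in L^q(0,T;L^q(\Omega))$; (iv) apply the abstract result (Prüss--Simonett, Theorem on maximal $L^p$-regularity, or the Da Prato--Grisvard / Dore--Venni theorem) to the homogeneous problem for $y_1$ with initial datum $y_0 - y_2(0)\in B^{2-2/q}_{q,q}$, obtaining $y_1\in L^q(0,T;W^{2,q})\cap W^{1,q}(0,T;X)$; (v) set $y=y_1+y_2$, check that the compatibility conditions guarantee $y_0-y_2(0)$ lies in the trace space (for $q>3/2$ the Dirichlet and continuity conditions must already hold for $y_0$; for $q>3$ additionally the flux-jump compatibility at $t=0$), and conclude uniqueness from the uniqueness in the abstract theorem together with uniqueness of the elliptic lifting.

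The main obstacle is step (i): establishing $\mathcal R$-sectoriality (equivalently, the maximal-regularity property) of the transmission operator $A$ on $L^q(\Omega)$ with a discontinuous diffusion coefficient. This is not elementary — it requires localizing near the interface, reducing to a model problem on $\mathbb R^2$ with the interface replaced by a hyperplane and $\beta$ by a two-valued step function, solving that model transmission problem explicitly in Fourier variables to get resolvent and $\mathcal R$-bounds, and then patching via a perturbation argument that absorbs the curvature of $\Gamma$ and lower-order terms. Fortunately this is exactly the content of the cited references \cite{Amann2021,Pruss2016} (and the transmission-problem literature they build on), so in the write-up I would state it as a citation rather than reprove it; the remaining steps (ii)--(v) are then essentially bookkeeping with trace and interpolation theorems. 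A secondary technical point is the careful handling of the exceptional exponents $q\in\{3/2,3\}$ and the case distinctions in the compatibility conditions, which stem from when the trace operators $v\mapsto v|_{\partial\Omega}$, $v\mapsto[v]_\Gamma$, $v\mapsto[\beta\partial_{\mathbf n}v]_\Gamma$ become bounded and surjective onto the relevant boundary Besov spaces at the level of the interpolation space $B^{2-2/q}_{q,q}$.
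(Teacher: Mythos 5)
The paper does not actually prove this lemma: it is stated as a quoted maximal-regularity result for parabolic transmission problems, with the proof deferred entirely to \cite{Amann2021,Pruss2016}. Your outline is a faithful reconstruction of the machinery contained in those references ($\mathcal R$-sectoriality of the transmission operator via localization to a flat-interface model problem, identification of the real-interpolation trace space $B^{2-2/q}_{q,q}(\Omega^+\cup\Omega^-)$, the anisotropic trace space $L^q(0,T;W^{1-1/q,q}(\Gamma))\cap W^{1/2-1/(2q),q}(0,T;L^q(\Gamma))$ for the flux jump, and the role of the exceptional exponents $q\in\{3/2,3\}$ and the compatibility conditions), and you appropriately cite rather than reprove the hard core, which is exactly what the paper itself does. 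One caveat on your step (iii): a lifting $y_2$ obtained by solving an elliptic transmission problem pointwise in time would not, in general, have $\partial_t y_2\in L^q(0,T;L^q(\Omega))$, because the jump datum only has fractional temporal regularity $W^{1/2-1/(2q),q}$; the lifting must be constructed anisotropically (via the parabolic model problem in Fourier--Laplace variables, or a Weidemaier-type right inverse of the space-time trace map as in \cite{Weidemaier2002}), which is how the cited works proceed. Since you defer to the same references for the substantive analysis, this does not invalidate your proposal, but the phrase ``stationary lifting'' should be replaced by the parabolic/anisotropic extension if you were to write the argument out in full.
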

\noindent Here and after, $B^{2-\frac{2}{p}}_{q,p}(\Omega)$ stands for the Besov space defined by
$$B^{2-\frac{2}{p}}_{q,p}(\Omega)=[L^{q}(\Omega),W^{2,q}(\Omega)]_{1-1/p,p}.$$

%\re{\begin{lem} Let $1<q<\infty$ with $q \notin \{3/2, 3\}$. Assume that $\Omega$ is a bounded domain with $\mathcal{C}^2$-boundary, let $f\in L^{q}(0,T;L^{q}(\Omega))$, $y_{0}\in B^{2-\frac{2}{q}}_{q,q}(\Omega^{+})\cap B^{2-\frac{2}{q}}_{q,q}(\Omega^{-})$ and $g+u\in L^{q}(0,T;W^{1-\frac{1}{q},q}(\Gamma)) \cap W^{\frac{1}{2}-\frac{1}{2q},q}(0,T;L^{q}(\Gamma))$. Suppose that the following compatibility conditions are satisfied:
%\begin{equation*}
%\left\{\begin{array}{ll}
%~[\beta\partial_{\mathbf{n}}y_{0}]_{\Gamma}=g(0)+u(0), &~\text{if} ~\frac{3}{2q}+\frac{1}{2}< 1,\\
%~[y_{0}]_\Gamma=0,~y_{0}|_{\partial \Omega}=0, &~\text{if} ~\frac{3}{2q}< 1.
%\end{array}\right.
%\end{equation*}
%Then there exists a unique solution $$y \in L^{q}(0,T;W^{2,q}(\Omega^{+})\cap W^{2,q}( \Omega^{-}))\cap W^{1,q}(0,T;L^{q}(\Omega^{+})\cap L^{q}(\Omega^{-})).$$
%\end{lem}}
%\begin{rem}
%Let $B^{2-\frac{2}{p}}_{q,p}(\Omega)$ denote the Besov space defined by
%$$B^{2-\frac{2}{p}}_{q,p}(\Omega)=[L^{q}(\Omega),W^{2,q}(\Omega)]_{1-1/p,p}.$$
%\end{rem}
To introduce the weak formulation of the equation (\ref{s12}), we define the bilinear form $ a(\cdot,\cdot): H^{1}(\Omega)\times H^{1}(\Omega)\rightarrow \mathbb{R} $ by
$$a(v,w)=\int_{\Omega}\beta\nabla v\cdot \nabla w dx \quad \forall v,w\in H^{1}(\Omega).$$
Let $\varepsilon (\Omega)$ be the energy space given by
$$\varepsilon(\Omega):=\{v\in H^{1}(\Omega):\|v\|^{2}_{\varepsilon,\Omega}:=a(v,v)<\infty\}.$$
The standard weak formulation of the state equation (\ref{s12}) is then defined as follows: Find a state $y(u) \in H^{1}_{0}(\Omega)$ satisfing
\begin{equation}\label{s21}
(y_{t}(u),w)+a(y(u),w)=(f,w)+\langle u+g, w\rangle_{\Gamma} \quad \forall  w\in H^{1}_{0}(\Omega), ~t\in (0,T),\\
\end{equation}
with $y(u)(0)=y_{0}$.
\par
For any given $u \in L^{2}(I;L^{2}(\Gamma))$, we can obtain that the state equation (\ref{s21}) admits a unique solution $ y(u)$. Therefore, we denote the control-to-state mapping of the state equation by $y:= Su$. The optimal control problem (\ref{s11}) can then be equivalently reformulated as
\begin{equation}\label{s22}
\min\limits_{u\in U_{ad}}J(u)= \frac{1}{2}\int_{0}^{T}\int_{\Omega} (Su-y_{d})^{2}dxdt+\frac{\alpha}{2}\int_{0}^{T}\int_{\Gamma} u^{2}ds dt.
\end{equation}
By standard arguments (see, e.g., \cite{Lions1971}), we can prove that the problem (\ref{s22}) admits a unique solution $\overline{u}\in U_{ad}$ with the corresponding state $\overline{y}=S\overline{u}$. Moreover, we have the following first-order optimality condition.
\begin{lem} Assume that $\overline{u}\in L^{2}(I;L^{2}(\Gamma))$ is the unique solution of problem \textup{(\ref{s22})} and let $\overline{y}$ be the associated state, there exists a unique adjoint state $\overline{p} \in L^{2}(I; H^{1}_{0}(\Omega))\cap H^{1}(I;L^{2}(\Omega))$ satisfying the adjoint equation
\begin{equation}\label{s23}
\left\{\begin{array}{ll}
 ~-\overline{p}_{t}-\nabla\cdot(\beta\nabla \overline{p})  = \overline{y}-y_{d}, & ~\text{in}~ \Omega\setminus\Gamma\times(0,T),  \\
 ~[\overline{p}]_\Gamma=0,~[\beta\partial_{\mathbf{n}}\overline{p}]_{\Gamma}=0,& ~\text{on}~ \Gamma\times(0,T), \\
 ~\overline{p}=0,&~\text{on}~ \partial\Omega\times(0,T),\\
 ~\overline{p}(T)=0,& ~\text{in}~\Omega,
\end{array}\right.
\end{equation}
and the variational inequality
\begin{equation}\label{s24}
\int_{0}^{T}\int_{\Gamma}(\alpha \overline{u}+\overline{p})(v-\overline{u})dsdt\geq 0,\quad \forall v\in U_{ad}.
\end{equation}
Moreover, the variational inequality is equivalent to
\begin{equation}\label{s25}
\overline{u}=P_{U_{ad}}\Big(-\frac{1}{\alpha}\overline{p}\Big|_{\Gamma}\Big),
\end{equation}
where $P_{U_{ad}}$ denotes the projection onto $U_{ad}$.
\end{lem}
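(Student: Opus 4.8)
The plan is to follow the standard adjoint-based argument for linear--quadratic control, paying attention to the interface transmission conditions where they enter the weak formulations. Since the state equation (\ref{s21}) is linear in $u$, the solution operator $S$ is affine and the reduced functional $J$ in (\ref{s22}) is strictly convex and Fr\'echet differentiable on $L^{2}(I;L^{2}(\Gamma))$, so the first-order variational inequality will be both necessary and sufficient. Writing $S=S_{0}+\hat{y}$ with $S_{0}$ linear and bounded, the first step is to record that the directional derivative $z:=S_{0}v=S'(\overline{u})v$ is the unique weak solution of the linearized equation
\begin{equation*}
(z_{t},w)+a(z,w)=\langle v,w\rangle_{\Gamma}\quad\forall\, w\in H^{1}_{0}(\Omega),\ t\in(0,T),\qquad z(0)=0,
\end{equation*}
and that by the chain rule $J'(\overline{u})v=\int_{0}^{T}(\overline{y}-y_{d},S_{0}v)\,dt+\alpha\int_{0}^{T}\langle\overline{u},v\rangle_{\Gamma}\,dt$.

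Next I would introduce the adjoint state: define $\overline{p}$ as the weak solution of (\ref{s23}), i.e.\ $\overline{p}(T)=0$ together with
\begin{equation*}
-(\overline{p}_{t},w)+a(\overline{p},w)=(\overline{y}-y_{d},w)\quad\forall\, w\in H^{1}_{0}(\Omega),\ t\in(0,T).
\end{equation*}
Because $\overline{y}-y_{d}\in L^{2}(I;L^{2}(\Omega))$, the terminal datum vanishes, and the jumps in (\ref{s23}) are homogeneous so that $\overline{p}$ solves an interface problem with no surface source, the change of variable $t\mapsto T-t$ turns this into a forward parabolic interface problem covered by the standard regularity theory (cf.\ the references preceding Lemma 2.1); this gives existence, uniqueness, and the claimed regularity $\overline{p}\in L^{2}(I;H^{1}_{0}(\Omega))\cap H^{1}(I;L^{2}(\Omega))$ (in fact also $\overline{p}\in L^{2}(I;H^{2}(\Omega^{+}\cup\Omega^{-}))$, though that is not needed here).

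The core step is the duality identity. Testing the adjoint equation with $w=z(t)$ and the linearized state equation with $w=\overline{p}(t)$, integrating over $(0,T)$, and subtracting, the symmetry of $a(\cdot,\cdot)$ cancels the bilinear terms, while the temporal terms combine to $\int_{0}^{T}\tfrac{d}{dt}(z,\overline{p})\,dt=(z(T),\overline{p}(T))-(z(0),\overline{p}(0))=0$, leaving
\begin{equation*}
\int_{0}^{T}(\overline{y}-y_{d},S_{0}v)\,dt=\int_{0}^{T}\langle\overline{p},v\rangle_{\Gamma}\,dt .
\end{equation*}
Substituting this into the expression for $J'(\overline{u})v$ yields $J'(\overline{u})v=\int_{0}^{T}\langle\alpha\overline{u}+\overline{p},v\rangle_{\Gamma}\,dt$, and then optimality of $\overline{u}$ over the nonempty closed convex set $U_{ad}$ gives $J'(\overline{u})(v-\overline{u})\ge 0$ for all $v\in U_{ad}$, which is precisely (\ref{s24}). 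The equivalence with (\ref{s25}) is the usual pointwise argument: since $U_{ad}$ is defined by the box constraints $u_{a}\le u\le u_{b}$, inserting localized test variations in (\ref{s24}) shows $(\alpha\overline{u}(x,t)+\overline{p}(x,t))(w-\overline{u}(x,t))\ge 0$ for all $w\in[u_{a},u_{b}]$ and a.e.\ $(x,t)\in\Gamma\times(0,T)$, which characterizes $\overline{u}=P_{U_{ad}}\big(-\tfrac{1}{\alpha}\overline{p}|_{\Gamma}\big)$; here $\overline{p}|_{\Gamma}$ is meaningful because $\overline{p}\in L^{2}(I;H^{1}_{0}(\Omega))$ has a trace in $L^{2}(I;H^{1/2}(\Gamma))\subset L^{2}(I;L^{2}(\Gamma))$.

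The main obstacle I expect is the rigorous justification of the duality step, namely that $t\mapsto(z(t),\overline{p}(t))$ is absolutely continuous with $\tfrac{d}{dt}(z,\overline{p})=\langle z_{t},\overline{p}\rangle+(\overline{p}_{t},z)$; this rests on the regularity $z\in L^{2}(I;H^{1}_{0}(\Omega))$, $z_{t}\in L^{2}(I;H^{-1}(\Omega))$ (note that $w\mapsto\langle v,w\rangle_{\Gamma}$ lies in $H^{-1}(\Omega)$ by the trace theorem) paired with $\overline{p}\in L^{2}(I;H^{1}_{0}(\Omega))\cap H^{1}(I;L^{2}(\Omega))$, together with a standard density/integration-by-parts lemma for such pairs. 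The interface itself requires no separate handling beyond checking that in each weak form $\langle v,w\rangle_{\Gamma}$ is the only surface contribution, since both $y(u)$ and $\overline{p}$ are sought in $H^{1}_{0}(\Omega)$ and hence automatically satisfy the continuity condition across $\Gamma$.
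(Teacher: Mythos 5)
Your proposal is correct and follows essentially the same route as the paper: introduce the adjoint problem, test the linearized state equation with $\overline{p}$ and the adjoint equation with the linearized state, use symmetry of $a(\cdot,\cdot)$ and the initial/terminal conditions to obtain the duality identity, and conclude the variational inequality from convex optimality. You additionally spell out points the paper leaves implicit (well-posedness of $\overline{p}$ by time reversal, the integration-by-parts justification, and the pointwise projection equivalence), which only strengthens the argument.
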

\begin{proof}
 Since the optimal control problem is quadratic and convex, by the standard method as in \cite{Lions1971,Hinze2009}, the optimality condition reads
 $$J'(\overline{u})(v-\overline{u})=\int_{0}^{T}\int_{\Omega}\widetilde{y}(\overline{y}-y_{d})dxdt+\int_{0}^{T}\int_{\Gamma}\alpha \overline{u}(v-\overline{u})dsdt\geq 0\quad \forall v\in U_{ad},$$
 where $\widetilde{y}=S'(\overline{u})(v-\overline{u})$ is the solution of the equation
 %\begin{eqnarray}\label{s1211}
%(\widetilde{y}_{t},w)+a(\widetilde{y}, w)= <v-\overline{u},w>, \quad \forall w \in H^{1}_{0}(\Omega),\\
%\end{eqnarray}
\begin{equation}\label{s26}
\left\{\begin{array}{ll}
 ~\widetilde{y}_{t}-\nabla\cdot(\beta\nabla \widetilde{y})  = 0, & ~\text{in}~ \Omega\setminus\Gamma\times(0,T),  \\
 ~[\widetilde{y}]_\Gamma=0,~[\beta\partial_{\mathbf{n}}\widetilde{y}]_{\Gamma}=v-\overline{u}, & ~\text{on}~ \Gamma\times(0,T),\\
 ~\widetilde{y}=0,&~\text{on}~ \partial\Omega\times(0,T),\\
 ~\widetilde{y}(0)=0,& ~\text{in} ~\Omega.
\end{array}\right.
\end{equation}
To further interpret the above condition, we introduce the following adjoint state equation:
\begin{equation}\label{s27}
\left\{\begin{array}{ll}
 ~-\overline{p}_{t}-\nabla\cdot(\beta\nabla \overline{p})  = \overline{y}-y_{d}, & ~\text{in}~ \Omega\setminus\Gamma\times(0,T),  \\
 ~[\overline{p}]_\Gamma=0,~[\beta\partial_{\mathbf{n}}\overline{p}]_{\Gamma}=0,& ~\text{on}~ \Gamma\times(0,T), \\
 ~\overline{p}=0,&~\text{on}~ \partial\Omega\times(0,T),\\
 ~\overline{p}(T)=0,& ~\text{in}~\Omega.
\end{array}\right.
\end{equation}
 %Letting $w=\overline{p}$ in (\ref{s1211}) and integrate from $0$ to $T$, we obtain
Choosing $\overline{p}$ as a test function in the weak formulation of the
equation (\ref{s26}) and integrating from $0$ to $T$, we obtain
\begin{eqnarray*}
-\int_{0}^{T}(\widetilde{y},\overline{p}_{t})dt+\int_{0}^{T}a( \widetilde{y},\overline{p})dt= \int_{0}^{T}\langle v-\overline{u},\overline{p}\rangle_{\Gamma} dt.
\end{eqnarray*}
On the other hand, we multiply both sides of (\ref{s27}) identically by $\widetilde{y}$ and integrate over the region:
\begin{eqnarray*}
-\int_{0}^{T}(\overline{p}_{t},\widetilde{y})dt+\int_{0}^{T}a(\overline{p}, \widetilde{y})dt=\int_{0}^{T}(\overline{y}-y_{d},\widetilde{y})dt.
\end{eqnarray*}
 With the above two formulas, we can get $\int_{0}^{T}\int_{\Omega}(\overline{y}-y_{d})\widetilde{y}dxdt= \int_{0}^{T}\int_{\Gamma} (v-\overline{u})\overline{p}ds dt$. The variational inequality reads
\begin{equation*}
J'(\overline{u})(v-\overline{u})=\int_{0}^{T}\int_{\Gamma}(\alpha \overline{u}+\overline{p}) (v-\overline{u})ds dt\geq 0,\quad \forall v\in U_{ad}.
\end{equation*}
%and a standard pointwise discussion leads to the projection formula
%\begin{equation*}
%\overline{u}=P_{U_{ad}}\Big(-\frac{1}{\alpha}\overline{p}\Big|_{\Gamma}\Big),
%\end{equation*}
%By the variational inequality for the optimal solution $\overline{u}$ of the optimal control problem, we obtain\cite[Lemma 2.26]{Trltzsch2010}
%\begin{equation*}
%\overline{u}=\left\{\begin{array}{ll}
%u_{a} & \text{if}\quad \alpha \overline{u}+\overline{p}>0,  \\
%u^*,u^*\in [u_a,u_b]& \text{if}\quad \alpha \overline{u}+\overline{p}=0,  \\
%u_{b} & \text{if}\quad \alpha \overline{u}+\overline{p}<0.  \\
%\end{array}
%\right.
%\end{equation*}
\end{proof}
Using the optimality condition (\ref{s25}), we obtain the following regularity result.
\begin{lem} Let $(\overline{y},\overline{u},\overline{p})$ be the solution of the optimal control problem \textup{(\ref{s22})-(\ref{s24})}. If $y_{d}, f \in H^{1}(0,T;L^{2}(\Omega))$, $y_{0}\in H^{1}_{0}(\Omega)$ and $g\in L^{2}(0,T;H^{\frac{1}{2}}(\Gamma))$, we have
$$\overline{u}\in L^{2}(0,T;H^{\frac{1}{2}}(\Gamma))\cap H^{\frac{1}{4}}(0,T;L^{2}(\Gamma)) ,$$
$$\overline{y},\overline{p} \in L^{2}(0,T;H^{2}(\Omega^{+}\cup\Omega^{-}))\cap H^{1}(0,T;H^{1}(\Omega^{+}\cup\Omega^{-})).$$
\end{lem}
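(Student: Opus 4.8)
The plan is to prove this by a two-pass bootstrap that couples the control-to-state solution operator $S$ with the projection identity (\ref{s25}) and with the regularity results of Lemma 2.1 and Lemma 2.2. As a crude starting point I would use the box constraints: $u_a\le\overline u\le u_b$ a.e.\ forces $\overline u\in L^\infty(0,T;L^\infty(\Gamma))$, hence $g+\overline u\in L^2(0,T;L^2(\Gamma))$; by the trace theorem the right-hand side of (\ref{s21}) then defines an element of $L^2(0,T;H^{-1}(\Omega))$, and the standard parabolic energy estimate gives $\overline y=S\overline u\in L^2(0,T;H^1_0(\Omega))\cap H^1(0,T;H^{-1}(\Omega))\hookrightarrow C([0,T];L^2(\Omega))$. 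In particular the adjoint source satisfies $\overline y-y_d\in L^2(0,T;L^2(\Omega))$.

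First pass on the adjoint. Since the transmission data in (\ref{s23}) vanish, after the time reversal $t\mapsto T-t$ the adjoint problem is a standard parabolic interface problem with right-hand side in $L^2(0,T;L^2(\Omega))$, homogeneous Dirichlet datum and zero initial value. Invoking maximal $L^2$-parabolic regularity for the self-adjoint interface operator $-\nabla\cdot(\beta\nabla\cdot)$, whose domain on the convex domain $\Omega$ is $H^1_0(\Omega)\cap H^2(\Omega^+\cup\Omega^-)$ (this is the case $q=2$ of Lemma 2.2, whose compatibility conditions are then void), I obtain
$$\overline p\in L^2(0,T;H^2(\Omega^+\cup\Omega^-))\cap H^1(0,T;L^2(\Omega)).$$
Next I would read off the lateral regularity of $\overline p$ by taking traces on $\Gamma$ (legitimate since $[\overline p]_\Gamma=0$): from $\overline p\in L^2(0,T;H^2(\Omega^+\cup\Omega^-))$ one has $\overline p|_\Gamma\in L^2(0,T;H^{3/2}(\Gamma))$, while standard space-time interpolation gives $\overline p\in H^{1/4}(0,T;H^{3/2}(\Omega^+\cup\Omega^-))$, whose trace lies in $H^{1/4}(0,T;H^{1}(\Gamma))$. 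Hence $\overline p|_\Gamma\in L^2(0,T;H^{1/2}(\Gamma))\cap H^{1/4}(0,T;L^2(\Gamma))$.

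Control regularity and second pass. The projection $P_{U_{ad}}$ acts pointwise as $v\mapsto\min\{u_b,\max\{u_a,v\}\}$, a $1$-Lipschitz superposition operator, and Lipschitz superposition operators are bounded on $W^{r,q}$ for $0\le r\le1$ --- the Gagliardo seminorm cannot increase under a Lipschitz map, and the same holds for the Hilbert-space-valued seminorm in $t$. Applying this with $r=\tfrac12$ on $\Gamma$ and $r=\tfrac14$ in $t$ to (\ref{s25}) gives
$$\overline u=P_{U_{ad}}\!\Big(-\tfrac1\alpha\,\overline p|_\Gamma\Big)\in L^2(0,T;H^{1/2}(\Gamma))\cap H^{1/4}(0,T;L^2(\Gamma)),$$
which is the first assertion. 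Then $g+\overline u\in L^2(0,T;H^{1/2}(\Gamma))$, so together with $f\in H^1(0,T;L^2(\Omega))$ and $y_0\in H^1_0(\Omega)$, Lemma 2.1 applied to (\ref{s21}) yields $\overline y\in L^2(0,T;H^2(\Omega^+\cup\Omega^-))\cap H^1(0,T;H^1(\Omega^+\cup\Omega^-))$. In particular $\overline y-y_d\in H^1(0,T;L^2(\Omega))$, and a second application of Lemma 2.1 to the time-reversed adjoint equation --- now with $H^1(0,T;L^2(\Omega))$ source, zero interface data and terminal value $0\in H^1_0(\Omega)$ --- upgrades $\overline p$ to $L^2(0,T;H^2(\Omega^+\cup\Omega^-))\cap H^1(0,T;H^1(\Omega^+\cup\Omega^-))$, finishing the proof.

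The hard part will be the control step: one has to extract from the mixed space-time bound on $\overline p$ a trace on $\Gamma\times(0,T)$ with the sharp fractional orders $\tfrac12$ in space and $\tfrac14$ in time, and then check that the obstacle-type projection $P_{U_{ad}}$ preserves precisely this fractional regularity. The latter works because the projection is only ever applied in Sobolev spaces of order $\le1$, which is exactly the threshold above which pointwise truncation fails to be bounded on Sobolev spaces; this is also why the control cannot be expected to be smoother than $H^{1/2}$ in space, the additional smoothness of $\overline p|_\Gamma$ being destroyed by the kinks of $\overline u$ along the interface between the active and inactive sets. Everything else --- the energy estimate, the maximal $L^2$-regularity of the interface operator, and the two invocations of Lemma 2.1 --- is routine.
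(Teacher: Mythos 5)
Your proposal is correct and follows essentially the same bootstrap as the paper: from $\overline{y}-y_{d}\in L^{2}(0,T;L^{2}(\Omega))$ one first gets the adjoint regularity $\overline{p}\in L^{2}(0,T;H^{2}(\Omega^{+}\cup\Omega^{-}))\cap H^{1}(0,T;L^{2}(\Omega))$ (the paper cites Sinha--Deka for this step rather than the $q=2$ case of Lemma 2.2, whose smooth-boundary hypothesis does not quite match the convex-domain setting), then the trace on $\Gamma$ and the projection formula (\ref{s25}) give the control regularity, and two applications of Lemma 2.1 yield the stated regularity of $\overline{y}$ and $\overline{p}$. The only difference is that you make explicit the space-time interpolation/trace argument and the Lipschitz stability of $P_{U_{ad}}$ on the fractional spaces, which the paper asserts without detail.
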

\begin{proof}
Note that $\overline{u} \in L^{2}(0,T;L^{2}(\Gamma))$ implies that $\overline{y} \in L^{2}(0,T;L^{2}(\Omega))$.  Since $\overline{y}-y_{d} \in L^{2}(0,T;L^{2}(\Omega))$, we conclude that $\overline{p} \in L^{2}(0,T;H^{2}(\Omega^{+}\cup\Omega^{-}))\cap H^{1}(0,T;L^{2}(\Omega))\cap L^{2}(0,T;H^{1}(\Omega))$ (see \cite{Sinha2005}),  hence $\overline{p}\mid_{\Gamma} \in L^{2}(0,T;H^{\frac{1}{2}}(\Gamma))\cap H^{\frac{1}{4}}(0,T;L^{2}(\Gamma))$. From (\ref{s25}) we obtain that  $\overline{u} \in L^{2}(0,T;H^{\frac{1}{2}}(\Gamma)) \cap H^{\frac{1}{4}}(0,T;L^{2}(\Gamma))$. Then applying Lemma 2.1, we have $\overline{y} \in L^{2}(0,T;H^{2}(\Omega^{+}\cup\Omega^{-}))\cap H^{1}(0,T;H^{1}(\Omega^{+}\cup\Omega^{-}))$.  This implies that $\overline{y}-y_{d} \in H^{1}(0,T;L^{2}(\Omega))$, so by Lemma 2.1 we have that $\overline{p} \in L^{2}(0,T;H^{2}(\Omega^{+}\cup\Omega^{-}))\cap H^{1}(0,T;H^{1}(\Omega^{+}\cup\Omega^{-}))$.
\end{proof}
%\begin{lem} Let $(\overline{y},\overline{u},\overline{p})$ be the solution of optimal control problem \textup{(\ref{s22})}. If $y_{d}, f \in L^{2}(0,T;L^{q}(\Omega))$ and $g\in L^{2}(0,T;W^{1-\frac{1}{q},q}(\Gamma))\cap W^{\frac{1}{2}-\frac{1}{2q},2}(0,T;L^{q}(\Gamma))$ for $q>2$, we have
%$$u\in L^{2}(0,T;W^{1-\frac{1}{q},q}(\Gamma))\cap W^{\frac{1}{2}-\frac{1}{2q},2}(0,T;L^{q}(\Gamma)),$$
%$$y,p \in L^{2}(0,T;W^{2,q}(\Omega^{+})\cap W^{2,q}(\Omega^{-}))\cap W^{1,2}(0,T;L^{q}(\Omega^{+})\cap L^{q}(\Omega^{-})).$$
%\end{lem}
%\begin{proof}
% For $\overline{u} \in L^{2}(I;L^{2}(\Gamma))$ implies that $y \in L^{2}(0,T; H^{1}(\Omega))$. In particular, this implies $\overline{y} \in L^{2}(I;L^{q}(\Omega))$. Since $\overline{y}-y_{d} \in L^{2}(I;L^{q}(\Omega))$, we conclude from Lemma 2.3 that $\overline{p} \in L^{2}(0,T;W^{2,q}(\Omega^{+})\cap W^{2,q}(\Omega^{-}))\cap W^{1,2}(0,T;L^{q}(\Omega^{+})\cap L^{q}(\Omega^{-}))$, hence $\overline{p} \in L^{2}(0,T;W^{2-\frac{1}{q},q}(\Gamma))\cap W^{1-\frac{1}{2q},2}(0,T;L^{q}(\Gamma))$. From (\ref{s25}) we conclude that  $\overline{u} \in L^{2}(0,T;W^{1-\frac{1}{q},q}(\Gamma))\cap W^{\frac{1}{2}-\frac{1}{2q},2}(0,T;L^{q}(\Gamma))$ (see ).
%Then applying Lemma 2.3, we obtain $\overline{y} \in L^{2}(0,T;W^{2,q}(\Omega^{+})\cap W^{2,q}(\Omega^{-}))\cap W^{1,2}(0,T;L^{q}(\Omega^{+})\cap L^{q}(\Omega^{-}))$.
%\end{proof}
We note that for optimal control problems posed on the domain $\Omega$ with smooth boundary we can get higher regularity.
\begin{lem} Let $(\overline{y},\overline{u},\overline{p})$ be the solution of the optimal control problem \textup{(\ref{s22})-(\ref{s24})}. If $y_{d}, f \in L^{q}(0,T;L^{q}(\Omega)) \cap H^{1}(0,T;L^{2}(\Omega))$, $g\in L^{q}(0,T;W^{1-\frac{1}{q},q}(\Gamma))\cap W^{\frac{1}{2}-\frac{1}{2q},2}(0,T;L^{q}(\Gamma)) \cap L^{2}(0,T;H^{\frac{1}{2}}(\Gamma))$, $y_{0}\in B^{2-\frac{2}{q}}_{q,q}(\Omega^{+}\cup\Omega^{-})$ for $2< q< 3$ and the given data  satisfy required compatibility condition, we have
$$\overline{u}\in L^{q}(0,T;W^{1-\frac{1}{q},q}(\Gamma))\cap W^{\frac{1}{2}-\frac{1}{2q},q}(0,T;L^{q}(\Gamma)),$$
$$\overline{y},\overline{p} \in L^{q}(0,T;W^{2,q}(\Omega^{+}\cup \Omega^{-}))\cap W^{1,q}(0,T;L^{q}(\Omega)).$$
\end{lem}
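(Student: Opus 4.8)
\emph{Plan.} The proof will be a bootstrapping argument that takes the $L^2$‑based regularity of Lemma 2.5 as its starting point and uses the maximal $L^q$‑regularity of Lemma 2.2 as the driving tool (the hypothesis that $\Omega$ has smooth boundary is exactly what makes Lemma 2.2 available here). One cycles through the three coupled equations in the order: adjoint equation $\to$ projection formula (\ref{s25}) $\to$ control $\to$ state equation. Only a single cycle is needed, since the right‑hand side $\overline y-y_d$ of the adjoint problem already has enough integrability after Lemma 2.5. Throughout we use $2<q<3$, so in Lemma 2.2 the only compatibility conditions in force are $[\,\cdot\,]_\Gamma=0$ and vanishing on $\partial\Omega$ (the flux compatibility, required only for $q>3$, never enters, and no time trace of the data at $t=0$ is needed).

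\emph{Step 1 (upgrade of $\overline p$).} By Lemma 2.5, $\overline y\in H^{1}(0,T;H^{1}(\Omega^{+}\cup\Omega^{-}))$. Using $H^{1}(0,T;X)\hookrightarrow C([0,T];X)$ and, in two dimensions, $H^{1}(\Omega^{\pm})\hookrightarrow L^{q}(\Omega^{\pm})$ for every finite $q$, we get $\overline y\in L^{\infty}(0,T;L^{q}(\Omega))\subset L^{q}(0,T;L^{q}(\Omega))$; combined with $y_{d}\in L^{q}(0,T;L^{q}(\Omega))$ this yields $\overline y-y_{d}\in L^{q}(0,T;L^{q}(\Omega))$. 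Reversing time $t\mapsto T-t$ turns (\ref{s23}) into a forward parabolic interface problem with right‑hand side in $L^{q}(0,T;L^{q}(\Omega))$, homogeneous interface data $[\overline p]_{\Gamma}=[\beta\partial_{\mathbf{n}}\overline p]_{\Gamma}=0$, and homogeneous initial datum $\overline p(T)=0$, so every hypothesis of Lemma 2.2 is trivially satisfied. Hence
\[
\overline p\in L^{q}(0,T;W^{2,q}(\Omega^{+}\cup\Omega^{-}))\cap W^{1,q}(0,T;L^{q}(\Omega)).
\]

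\emph{Step 2 (regularity of $\overline u$, then of $\overline y$).} From the anisotropic (parabolic) trace theorem, the traces of $\overline p|_{\Omega^{\pm}}$ on $\Gamma$ — which coincide since $[\overline p]_{\Gamma}=0$ — belong to $L^{q}(0,T;W^{2-\frac1q,q}(\Gamma))\cap W^{1-\frac{1}{2q},q}(0,T;L^{q}(\Gamma))$, which embeds continuously into $L^{q}(0,T;W^{1-\frac1q,q}(\Gamma))\cap W^{\frac12-\frac{1}{2q},q}(0,T;L^{q}(\Gamma))$. Since $P_{U_{ad}}$ is the pointwise truncation $v\mapsto\max\{u_{a},\min\{u_{b},v\}\}$, a globally Lipschitz Nemytskii map, and composition with such a map preserves $W^{s,q}$ for $0<s\le1$ (here $0<1-\frac1q<1$ and $0<\frac12-\frac{1}{2q}<1$ for $2<q<3$), formula (\ref{s25}) gives
\[
\overline u\in L^{q}(0,T;W^{1-\frac1q,q}(\Gamma))\cap W^{\frac12-\frac{1}{2q},q}(0,T;L^{q}(\Gamma)).
\]
Therefore $g+\overline u$ satisfies the interface hypotheses of Lemma 2.2; together with $f\in L^{q}(0,T;L^{q}(\Omega))$, $y_{0}\in B^{2-\frac2q}_{q,q}(\Omega^{+}\cup\Omega^{-})$ and the assumed compatibility conditions, Lemma 2.2 applied to (\ref{s12}) yields $\overline y\in L^{q}(0,T;W^{2,q}(\Omega^{+}\cup\Omega^{-}))\cap W^{1,q}(0,T;L^{q}(\Omega))$, which finishes the proof.

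\emph{Main obstacle.} The delicate ingredient is the trace step: one needs the sharp statement that the lateral trace operator maps $L^{q}(0,T;W^{2,q})\cap W^{1,q}(0,T;L^{q})$ onto the parabolic boundary space with spatial smoothness $2-\frac1q$ and temporal smoothness $1-\frac{1}{2q}$, applied separately on $\Omega^{+}$ and $\Omega^{-}$ (this is where the $\mathcal C^{2}$ regularity of $\Gamma$ is used) and then matched via $[\overline p]_{\Gamma}=0$; a precise reference from the maximal‑regularity literature cited for Lemma 2.2 should be invoked. A secondary point is the superposition/Nemytskii regularity of the projection on the Slobodeckij spaces $W^{s,q}$ in space and time, for which the Lipschitz‑composition estimate on Gagliardo seminorms suffices.
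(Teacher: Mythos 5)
Your argument is correct and follows essentially the same route as the paper's proof: both start from the $L^{2}$-based regularity of the previous lemma to conclude $\overline{y}-y_{d}\in L^{q}(0,T;L^{q}(\Omega))$, apply the maximal $L^{q}$-regularity result (Lemma 2.2) to the adjoint equation, take the lateral trace of $\overline{p}$ on $\Gamma$ (the paper cites Weidemaier for exactly the anisotropic trace space you describe), transfer that regularity to $\overline{u}$ via the projection formula (\ref{s25}), and then apply Lemma 2.2 once more to the state equation. Your added details (time reversal, the Lipschitz/Nemytskii property of the truncation on the Slobodeckij spaces, the embedding of the trace space into the data space of Lemma 2.2) simply make explicit what the paper leaves implicit.
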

\begin{proof}
 From Lemma 2.4 we already have $y \in  L^{2}(0,T;H^{2}(\Omega^{+}\cup\Omega^{-}))\cap H^{1}(0,T; H^{1}(\Omega^{+}\cup\Omega^{-}))$. In particular, this implies $\overline{y} \in L^{q}(0,T;L^{q}(\Omega))$. If  $y_{d} \in L^{q}(0,T;L^{q}(\Omega))$, we have the improved regularity $\overline{p} \in L^{q}(0,T;W^{2,q}(\Omega^{+}\cup \Omega^{-}))\cap W^{1,q}(0,T;L^{q}(\Omega))$ from Lemma 2.2, hence $\overline{p}|_{\Gamma} \in L^{q}(0,T;W^{2-\frac{1}{q},q}(\Gamma))\cap W^{1-\frac{1}{2q},q}(0,T;L^{q}(\Gamma))$ (see \cite{Weidemaier2002}). From (\ref{s25}) we conclude that $\overline{u} \in L^{q}(0,T;W^{1-\frac{1}{q},q}(\Gamma))\cap W^{\frac{1}{2}-\frac{1}{2q},q}(0,T;L^{q}(\Gamma))$. Then applying Lemma 2.2, we obtain $\overline{y} \in L^{q}(0,T;W^{2,q}(\Omega^{+}\cup \Omega^{-}))\cap W^{1,q}(0,T;L^{q}(\Omega))$.
\end{proof}
\begin{rem}
Assume that $\Omega$ is a bounded domain with smooth boundary. According to Lemma 2.5, we obtain $\overline{y},\overline{p} \in L^{q}(0,T;W^{2,q}(\Omega^{+}\cup\Omega^{-}))\cap W^{1,q}(0,T;L^{q}(\Omega))$. The trace theorem then implies $\overline{y}, \overline{p}|_{\Gamma} \in L^{q}(0,T;W^{2-\frac{1}{q},q}(\Gamma))$ (see \cite{Weidemaier2002}). Using the Sobolev embedding $W^{2-\frac{1}{q},q}(\Gamma) \hookrightarrow W^{1,\infty}(\Gamma)$ for $q>2$, we obtain $\overline{y}, \overline{p}|_{\Gamma} \in L^{q}(0,T;W^{1,\infty}(\Gamma))$, hence $\overline{y},\overline{p} |_{\Gamma}\in L^{2}(0,T;W^{1,\infty}(\Gamma))$. This regularity ensures the regularity assumption of Theorem 3.1 and Theorem 4.1.
\end{rem}
\section{\bf The Discretization of the Optimal Control Problem}
In this section, we consider the fully discrete approximation of the control problem (\ref{s22}). For the discretization of the state equation, we use the stable generalized finite element method (SGFEM) for the spatial discretization and the backward Euler scheme for the temporal discretization. The discretization of the control variable is obtained by the projection of the discretized adjoint state on the set of admissible controls, the so-called variational discretization (see \cite{Hinze2005}).
\subsection{ The stable generalized finite element method}
Let $\mathcal T_{h}=\{K\}$ denote a uniform triangulation of $\Omega$ with mesh size $h$. Denote $\{P_{i}\}_{i\in I_{h}}$ to be the set of finite
element nodes associated with the mesh $\mathcal T_{h} $, where $I_{h}$ is the index set of the nodes. For every $i\in I_{h}$, we consider the standard linear finite element basis function $\phi_{i}$. The approximate subspace of the GFEM is defined as:
\begin{equation*}
\mathbb{S}_{h} = \mathbb{S}_{FEM} \oplus \mathbb{S}_{ENR},
\end{equation*}
and
$$\mathbb{S}_{FEM}  = \text{span} \{ \phi_i : i \in I_h \},~\mathbb{S}_{ENR} = \text{span} \{  \phi_i\Pi_{i} : i \in I_{enr} \subset I_h\},$$
where $I_{enr}=\{i \in I_h: P_{i} \in K ~\text{where}~ K\cap\Gamma\neq\phi\}$ denotes the index set of enrichment nodes (see Fig. 2). The enrichment function $\Pi_{i}$ is generally based on the absolute value of the level set function \cite{Fries2010}
$$D(P) = |\varphi(P)|,~ \varphi(\cdot) ~\text{is a level set function}$$
or the distance function \cite{Kergrene2016,Sukumar2001}
$$D(P) = \text{dist}(P, \Gamma), ~\text{dist}(P, \Gamma) \text{ is the distance of point } P \text{ to the interface } \Gamma.$$%, ~\text{dist}(P, \Gamma) ~\text{is the distance of point} ~P ~\text{to the interface} \Gamma
Unfortunately, the condition number of the stiffness matrices can be very large when the GFEM is applied to the interface problem \cite{Babuka2012,Babuka2017,Kergrene2016,Zhang2014,Zhu2020}. This is mainly caused by almost linear dependence between the FE functions and added special functions. To address the bad conditioning of the GFEM, a stable GFEM (SGFEM) was proposed in \cite{Babuka2017,Zhang2019,Gupta2015,Kergrene2016,Zhang2014,Zhang2016,Gupta2013,Zhang2020}. The main idea is to modify the enrichment space by subtracting the interpolation of the enrichment function. The approximate subspace of the SGFEM is given by
\begin{equation*}
\mathbb{S}_{h} = \mathbb{S}_{FEM} \oplus \mathbb{S}_{ENR} ~\text{and}~ \mathbb{S}_{ENR} = \text{span} \{  \phi_i(D-\mathcal{I}_h D) : i \in I_{enr} \subset I_h\},
\end{equation*}
% In \cite{Babuka2012,Babuka2017,Kergrene2016,Zhang2014,Zhu2020}, the approach modified the distance function $D$ to a one-sided distance function $\widetilde{D}$, namely
where $\mathcal{I}_h w$ is the finite element interpolant of $w$. In \cite{Zhu2020}, the author designed an SGFEM with a one-sided distance function that is simpler than the standard distance function or its other versions:
$$
\tilde{D}(x):=\left\{\begin{array}{llll}
D(x), &x  \in \Omega^{+}, \\
0, & x \in  \Omega^{-} .
\end{array}\right.
$$
To reduce the computational cost of the enrichment function, we use the enrichment function proposed in \cite{Zhu2020}.
\begin{figure}
    \centering
    \includegraphics[width=0.4\linewidth]{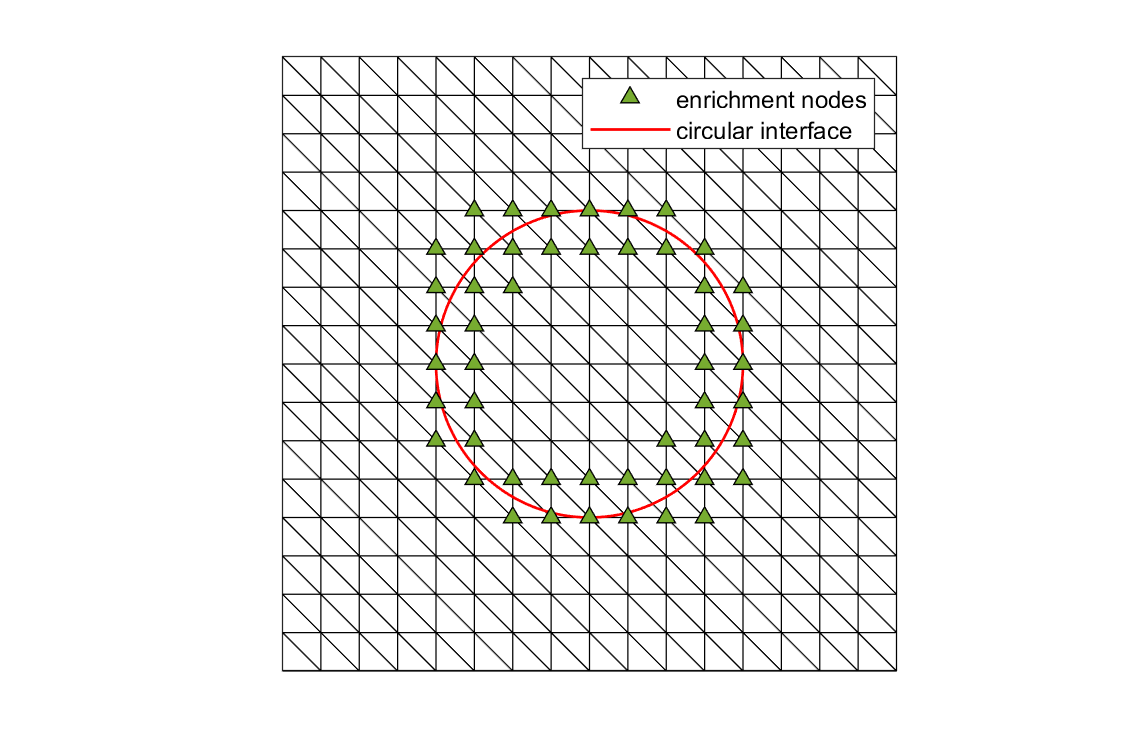}
    \caption*{Fig. 2. An illustration of the enrichment nodes $P_i, i\in I_{enr}$.}
\end{figure}
For every $t \in [0, T]$ and $w$, we define the elliptic projection $R_{h}w \in \mathbb{S}_{h}$ on $\varepsilon(\Omega)$:
$$a(w-R_{h}w,v_{h})=0,\quad \forall v_{h} \in \mathbb{S}_{h},$$
with $\displaystyle \int_{\Omega}\big(w(t)-R_{h}w(t)\big) dx =0 $.
\begin{lem}\textup{\cite{Zhu2020}}
 Let $R_{h}$ be the elliptic projection operator defined above. Then there exists $C > 0$ independent of $h$ such that
\begin{equation*}
\begin{aligned}
&\|w-R_{h}w\|_{\varepsilon,\Omega}\leq Ch\|w\|_{\mathbb{X}},\\
&\|w-R_{h}w\|_{0,\Omega}\leq Ch^{2}\|w\|_{\mathbb{X}},
\end{aligned}
\end{equation*}
where $\mathbb{X}:=\{w\in \varepsilon(\Omega):w|_{\Omega^{-}}\in H^{2}(\Omega^{-}),w|_{\Omega^{+}}\in H^{2}(\Omega^{+}), [w]_{\Gamma}=0~\text{and}~ ~\|\nabla w \|_{\infty,\Gamma}<\infty \}$ with norm $\|w\|_{\mathbb{X}}:=\|w\|_{2,\Omega^{+}}+\|w\|_{2,\Omega^{-}}+\|\nabla w \|_{\infty,\Gamma}$.
\end{lem}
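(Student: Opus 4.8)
The plan is to run the standard Galerkin error analysis for the bilinear form $a(\cdot,\cdot)$ on $\mathbb{S}_h$, the only nonstandard ingredient being the approximation property of the SGFEM enrichment space across the curved interface $\Gamma$, and then to upgrade the energy estimate to the $L^{2}$ estimate by an Aubin--Nitsche duality argument.

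\emph{Step 1 (energy estimate).} By definition $R_h w$ is the $a$-orthogonal projection of $w$ onto $\mathbb{S}_h$ (the mean-value normalization only fixes the additive constant, and $\mathbb{S}_h\supset\mathbb{S}_{FEM}$ contains the constants), so Galerkin orthogonality $a(w-R_hw,v_h)=0$ for all $v_h\in\mathbb{S}_h$ gives the best-approximation bound $\|w-R_hw\|_{\varepsilon,\Omega}\le\|w-v_h\|_{\varepsilon,\Omega}$ for every $v_h\in\mathbb{S}_h$. It therefore suffices to construct one quasi-interpolant $v_h\in\mathbb{S}_h$ with $\|w-v_h\|_{\varepsilon,\Omega}\le Ch\|w\|_{\mathbb{X}}$. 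On the elements away from $\Gamma$, $w$ is locally $H^{2}$, so the ordinary piecewise-linear (Scott--Zhang type) interpolant produces the $O(h)$ energy error via Bramble--Hilbert. On the enrichment band $\omega_{enr}=\bigcup\{K\in\mathcal T_h:K\cap\Gamma\ne\emptyset\}$ one splits $w$ into a globally $H^{2}$ part $w^{reg}$ and a ``kink'' part whose transmission behaviour along $\Gamma$ is captured, up to higher-order terms because $\Gamma\in\mathcal C^{2}$, by a linear combination $\sum_{i\in I_{enr}}c_i\,\phi_i(\tilde D-\mathcal I_h\tilde D)$ of the stabilized enrichment functions; subtracting $\mathcal I_h\tilde D$ removes the component already lying in $\mathbb{S}_{FEM}$ and leaves an $O(h)$ error in the energy norm on $\omega_{enr}$. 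This is precisely where the norm $\|w\|_{\mathbb X}$ is used: $\|w\|_{2,\Omega^{\pm}}$ controls the interior interpolation error, while $\|\nabla w\|_{\infty,\Gamma}$ controls the size of the kink to be reproduced by the enrichment.

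\emph{Step 2 (duality).} Put $z=w-R_hw$ and let $\psi$ solve the adjoint interface problem $-\nabla\cdot(\beta\nabla\psi)=z$ in $\Omega\setminus\Gamma$, $[\psi]_\Gamma=[\beta\partial_{\mathbf n}\psi]_\Gamma=0$, $\psi=0$ on $\partial\Omega$. Since $\Omega$ is convex and $\Gamma\in\mathcal C^{2}$, interface elliptic regularity gives $\psi\in\mathbb X$ with $\|\psi\|_{\mathbb X}\le C\|z\|_{0,\Omega}$. Testing the weak form with $z$, using Galerkin orthogonality with an arbitrary $v_h\in\mathbb{S}_h$, and applying the Step 1 estimate to both $w$ and $\psi$,
\[
\|z\|_{0,\Omega}^{2}=a(z,\psi)=a(z,\psi-v_h)\le\|z\|_{\varepsilon,\Omega}\,\|\psi-v_h\|_{\varepsilon,\Omega}\le Ch\|w\|_{\mathbb X}\cdot Ch\|\psi\|_{\mathbb X}\le Ch^{2}\|w\|_{\mathbb X}\|z\|_{0,\Omega},
\]
and dividing by $\|z\|_{0,\Omega}$ yields the claimed $O(h^{2})$ bound.

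\emph{Main obstacle.} The delicate part is Step 1 on $\omega_{enr}$: one must show that $\mathrm{span}\{\phi_i(\tilde D-\mathcal I_h\tilde D):i\in I_{enr}\}$ reproduces the jump in the normal derivative of $w$ across $\Gamma$ to first order in the energy norm, \emph{uniformly} in how $\Gamma$ cuts the mesh (no mesh-fitting and no small-cut hypothesis), using only that $\tilde D$ is Lipschitz and piecewise $\mathcal C^{2}$ with bounded second derivatives because $\Gamma\in\mathcal C^{2}$; this is the content of the approximability analysis of \cite{Zhu2020}. The companion fact needed in Step 2, namely $\|\psi\|_{\mathbb X}\le C\|z\|_{0,\Omega}$ — in particular the control of $\|\nabla\psi\|_{\infty,\Gamma}$ — is the point at which the convexity of $\Omega$ and the $\mathcal C^{2}$ smoothness of $\Gamma$ are essential. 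Once these two local/regularity facts are in hand, the global assembly and the duality step are routine.
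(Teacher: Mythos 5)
The paper itself gives no proof of this lemma: it is quoted directly from \cite{Zhu2020}, so the only comparison possible is with the standard argument that reference carries out, which is indeed the C\'ea-plus-Nitsche route you outline. Your Step 1 is a reasonable sketch of the energy estimate, with the honest acknowledgment that the uniform-in-cut approximability of the derivative kink by $\mathrm{span}\{\phi_i(\tilde D-\mathcal I_h\tilde D):i\in I_{enr}\}$ is exactly the technical core of \cite{Zhu2020}; since the paper under review also simply cites that work, deferring this point is defensible, though it means your write-up proves nothing beyond what is outsourced.

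There is, however, a genuine gap in Step 2. You assert that interface elliptic regularity gives $\psi\in\mathbb{X}$ with $\|\psi\|_{\mathbb X}\le C\|z\|_{0,\Omega}$, and in particular $\|\nabla\psi\|_{\infty,\Gamma}\le C\|z\|_{0,\Omega}$. This is false in general: with only $L^{2}$ data the dual solution is piecewise $H^{2}$, so $\nabla\psi|_{\Gamma}$ lies in $H^{1/2}(\Gamma)$, and on a one-dimensional curve $H^{1/2}$ is the borderline case that does \emph{not} embed into $L^{\infty}$. Hence the $\mathbb X$-norm of $\psi$ is not controlled by $\|z\|_{0,\Omega}$, and you cannot invoke your Step 1 estimate (whose constant is $\|\psi\|_{\mathbb X}$) for the dual solution; nor can you repair it by taking $z\in L^{q}$, $q>2$, because in the final step you must divide by $\|z\|_{0,\Omega}$, and $\|z\|_{0,q,\Omega}$ is not dominated by $\|z\|_{0,\Omega}$. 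Closing the duality argument requires an energy-norm approximation estimate for the dual solution whose constant involves only the piecewise $H^{2}$ norm (i.e.\ a second, weaker interpolation bound for the SGFEM space than the one you use for $w$), which is precisely the point the cited analysis must and does address and which your sketch passes over; as written, the $O(h^{2})$ bound rests on an unavailable regularity claim.
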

%\begin{rem}
%In this paper, for simplicity, we only consider the case that the gradient of $y$ has a homogeneous jump condition $[\beta\partial_{\mathbf{n}}y]_{\Gamma}=0$. However, the above theoretical results can be obtained for case of the non-homogeneous condition $[\beta\partial_{\mathbf{n}}y]_{\Gamma}\neq0$.  We perform numerical experiments on non-homogeneous condition in Example 3 in Section 4 to confirm the optimal convergence.
%\end{rem}
\subsection{Fully discrete approximation of optimal control problems}
We consider the fully discrete approximation for the state equation (\ref{s21}) by
using the stable generalized finite element method and the backward Euler method. We consider a uniform partitioning of the time interval $[0, T]$ by the points $0=t_{0}<t_{1}<\cdots<t_{M-1}<t_{M}=T$ with $ t_{n} =n\Delta t,$ $\Delta t=T/M$ being the time step. Let $I_{n}=(t_{n-1},t_{n}]$ be the nth subinterval and $v^{n}$ denote the value of a function $v(x,t)$ at $t_{n}$. \par
For a given control  $u \in U_{ad}$ the fully discrete approximation of the state equation (\ref{s21}) is defined as follows: Find a state $Y_{h}^{n}(u)\in \mathbb{S}_{h}\cap H_{0}^{1}(\Omega)$ such that
\begin{equation}\label{s32}
\begin{aligned}
\Big(\frac{Y_{h}^{n}(u)-Y_{h}^{n-1}(u)}{\Delta t},w_{h}\Big)+a(Y_{h}^{n}(u),w_{h})=\frac{1}{\Delta t}\int_{I_{n}}(f,w_{h})dt+\frac{1}{\Delta t}\int_{I_{n}}\langle g+u,w_{h}\rangle_{\Gamma}dt,&\\
\forall w_{h} \in \mathbb{S}_{h}\cap H_{0}^{1}(\Omega), ~n=1,2,\cdots,M, ~\text{with}~ Y_{h}^{0}(u)=R_{h}y_{0}.&\\
\end{aligned}
\end{equation}
In the following we denote $Y_h(u)$ as the fully discrete finite element approximation of $y(u)$, i.e., $Y_h(u)|_{I_{n}}=Y_{h}^{n}(u), ~n=1,2,\cdots,M$.
For the error analysis derived later, we will need the following interpolant $\overline{P}^{n}_{k}$ defined by
\begin{equation*}
\overline{P}^{n}_{k}v=\frac{1}{\Delta t}\int_{I_{n}}v(\cdot,t)dt,\quad n=1,2,\ldots,M ~ \text{and}~\overline{P}^{0}_{k}v =v^{0}
\end{equation*}
and the interpolant $\overline{R}^{n}_{h}$ defined by
\begin{equation*}
\overline{R}^{n}_{h}v=\frac{1}{\Delta t}\int_{I_{n}}R_{h}v(\cdot,t)dt,\quad n=1,2,\ldots,M ~ \text{and}~\overline{R}^{0}_{h}v =R_{h}v^{0}.
\end{equation*}
It is easy to show that
\begin{equation}\label{s4666}
\Big(\sum\limits_{n=1}^{M}\Delta t\| v^{n}-\overline{P}_{k}^{n}v\|_{0,\Omega}^{2}\Big)^{\frac{1}{2}}\leq C\Delta t\| v_{t}\|_{L^{2}(0,T;L^{2}(\Omega))}
\end{equation}
and
\begin{equation}\label{s4667}
\Big(\sum\limits_{n=1}^{M}\Delta t\| \overline{P}_{k}^{n}v-\overline{R}_{h}^{n}v\|_{0,\Omega}^{2}\Big)^{\frac{1}{2}}\leq Ch^{2}\| v\|_{L^{2}(0,T;\mathbb{X})}.
\end{equation}
Consider the following auxiliary problem: Find $z_{h}^{n} \in \mathbb{S}_{h} \cap H_{0}^{1}(\Omega)$ such that
\begin{equation}\label{s37}
\begin{aligned}
\Big(\frac{z_{h}^{n-1}-z_{h}^{n}}{\Delta t},w_{h}\Big)+a(z_{h}^{n-1},w_{h})=\frac{1}{\Delta t}\int_{I_{n}}(\varphi,w_{h})dt,\quad \forall w_{h} \in \mathbb{S}_{h}\cap H_{0}^{1}(\Omega),&\\
 n=1,2,\cdots,M,&
\end{aligned}
\end{equation}
with $z^{M}_{h}=0$. In the following lemma we provide a stability estimate for the solution
of (\ref{s37}).
\begin{lem} For given $\varphi \in L^{2}(0,T;L^{2}(\Omega))$, let $z_{h}^{n}\in \mathbb{S}_{h}\cap H_{0}^{1}(\Omega)$ be the solution of equation \textup{(\ref{s37})}. Then it holds that
%\begin{equation*}
%\max_{1\leq n\leq M}\| z_{h}^{n} \|_{0,\Omega}^{2}+\sum\limits_{m=1}^{M}\|z_{h}^{n}-z_{h}^{n-1}\|^{2}+\sum\limits_{m=1}^{M}\Delta t\|z_{h}^{n}\|_{\varepsilon}^{2}\leq C\sum\limits_{m=1}^{M}\int_{I_{n}}\| \varphi\|_{0,\Omega}^{2}dt+C\sum\limits_{m=1}^{M}\int_{I_{n}}\|\psi\|_{0,\Gamma}^{2}dt.
%\end{equation*}
\begin{equation*}
\| z_{h}^{0} \|_{\varepsilon,\Omega}^{2}+\sum\limits_{n=1}^{M} \Delta t^{-1}\|z_{h}^{n}-z_{h}^{n-1}\|_{0,\Omega}^{2}\leq C\sum\limits_{n=1}^{M}\int_{I_{n}}\| \varphi\|_{0,\Omega}^{2}dt.
\end{equation*}
\end{lem}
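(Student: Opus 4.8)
The plan is to run the standard discrete energy argument for the backward-in-time scheme \eqref{s37}, testing with a natural test function and summing over the time levels. First I would take $w_h = z_h^{n-1}$ in \eqref{s37}. The stiffness term gives $a(z_h^{n-1},z_h^{n-1}) = \|z_h^{n-1}\|_{\varepsilon,\Omega}^2$, which is nonnegative and can simply be dropped when bounding from above. For the discrete time-derivative term I would use the algebraic identity $\big(z_h^{n-1}-z_h^n, z_h^{n-1}\big) = \tfrac12\|z_h^{n-1}\|_{0,\Omega}^2 - \tfrac12\|z_h^n\|_{0,\Omega}^2 + \tfrac12\|z_h^{n-1}-z_h^n\|_{0,\Omega}^2$, so that after multiplying \eqref{s37} by $\Delta t$ we get
\begin{equation*}
\tfrac12\|z_h^{n-1}\|_{0,\Omega}^2 - \tfrac12\|z_h^{n}\|_{0,\Omega}^2 + \tfrac12\|z_h^{n-1}-z_h^{n}\|_{0,\Omega}^2 + \Delta t\,\|z_h^{n-1}\|_{\varepsilon,\Omega}^2 = \int_{I_n}(\varphi,z_h^{n-1})\,dt.
\end{equation*}
Summing over $n=1,\dots,M$ and using the terminal condition $z_h^M=0$, the telescoping left-hand side collapses to $\tfrac12\|z_h^0\|_{0,\Omega}^2 + \tfrac12\sum_{n=1}^M\|z_h^{n-1}-z_h^n\|_{0,\Omega}^2 + \sum_{n=1}^M \Delta t\,\|z_h^{n-1}\|_{\varepsilon,\Omega}^2$.

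This already controls the $L^2$-in-time energy norm $\sum_n \Delta t\|z_h^{n-1}\|_{\varepsilon,\Omega}^2$ together with $\sum_n\|z_h^{n-1}-z_h^n\|_{0,\Omega}^2$, but the stated lemma asks for $\|z_h^0\|_{\varepsilon,\Omega}^2$ and the \emph{scaled} jump sum $\sum_n \Delta t^{-1}\|z_h^n-z_h^{n-1}\|_{0,\Omega}^2$, which is stronger. To get these I would instead test \eqref{s37} with $w_h = z_h^{n-1}-z_h^{n}$. The time-derivative term then produces $\Delta t^{-1}\|z_h^{n-1}-z_h^n\|_{0,\Omega}^2$, and the stiffness term gives $a(z_h^{n-1}, z_h^{n-1}-z_h^n)$, which I would rewrite via $a(z_h^{n-1},z_h^{n-1}-z_h^n) = \tfrac12\|z_h^{n-1}\|_{\varepsilon,\Omega}^2 - \tfrac12\|z_h^n\|_{\varepsilon,\Omega}^2 + \tfrac12\|z_h^{n-1}-z_h^n\|_{\varepsilon,\Omega}^2$ (this is just the polarization identity for the symmetric bilinear form $a$). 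Dropping the last nonnegative term, multiplying by $\Delta t$, and summing with $z_h^M=0$ yields
\begin{equation*}
\sum_{n=1}^M \Delta t^{-1}\|z_h^{n-1}-z_h^n\|_{0,\Omega}^2 + \tfrac12\|z_h^0\|_{\varepsilon,\Omega}^2 \le \sum_{n=1}^M \int_{I_n}(\varphi,\, z_h^{n-1}-z_h^n)\,dt.
\end{equation*}

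The remaining step is to absorb the right-hand side. I would estimate $\int_{I_n}(\varphi, z_h^{n-1}-z_h^n)\,dt \le \int_{I_n}\|\varphi\|_{0,\Omega}\,\|z_h^{n-1}-z_h^n\|_{0,\Omega}\,dt$; since $z_h^{n-1}-z_h^n$ is constant on $I_n$ this is $\|z_h^{n-1}-z_h^n\|_{0,\Omega}\int_{I_n}\|\varphi\|_{0,\Omega}\,dt$, and by Cauchy--Schwarz in time $\int_{I_n}\|\varphi\|_{0,\Omega}\,dt \le (\Delta t)^{1/2}\big(\int_{I_n}\|\varphi\|_{0,\Omega}^2\,dt\big)^{1/2}$. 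Then Young's inequality in the form $ab \le \varepsilon a^2 + \tfrac1{4\varepsilon} b^2$ with $a = \Delta t^{-1/2}\|z_h^{n-1}-z_h^n\|_{0,\Omega}$, $b = (\Delta t)^{1/2}\big(\int_{I_n}\|\varphi\|_{0,\Omega}^2\,dt\big)^{1/2}$ and a sufficiently small $\varepsilon$ lets me absorb $\varepsilon\sum_n \Delta t^{-1}\|z_h^{n-1}-z_h^n\|_{0,\Omega}^2$ into the left-hand side, leaving $C\sum_n\int_{I_n}\|\varphi\|_{0,\Omega}^2\,dt$ on the right. This gives exactly the claimed bound. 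The only mild subtlety — and the one place to be careful — is the bookkeeping of which test function delivers which term: the scaled jump sum and the $\varepsilon$-norm of $z_h^0$ genuinely require the increment test function $z_h^{n-1}-z_h^n$ rather than $z_h^{n-1}$, whereas a more naive test would only control the weaker quantities; beyond that the argument is entirely routine and no regularity of $\varphi$ beyond $L^2(0,T;L^2(\Omega))$ is used.
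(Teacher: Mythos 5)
Your proposal is essentially the paper's own proof: the paper likewise tests (\ref{s37}) with $w_{h}=z_{h}^{n-1}-z_{h}^{n}$, uses the symmetric-form identity $a(z_{h}^{n-1},z_{h}^{n-1}-z_{h}^{n})=\tfrac12\|z_{h}^{n-1}\|_{\varepsilon,\Omega}^{2}-\tfrac12\|z_{h}^{n}\|_{\varepsilon,\Omega}^{2}+\tfrac12\|z_{h}^{n-1}-z_{h}^{n}\|_{\varepsilon,\Omega}^{2}$, telescopes using $z_{h}^{M}=0$, and absorbs the right-hand side by Cauchy--Schwarz in time plus Young's inequality, exactly as you describe. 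The only blemish is a harmless bookkeeping slip: in your intermediate display you should not ``multiply by $\Delta t$'' (equivalently, the right-hand side there should carry the factor $\Delta t^{-1}$ coming from (\ref{s37})); once that is fixed, your choice $a=\Delta t^{-1/2}\|z_{h}^{n-1}-z_{h}^{n}\|_{0,\Omega}$ in Young's inequality yields precisely the paper's estimate.
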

\begin{proof}
Setting $w_{h}=z_{h}^{n-1}-z_{h}^{n}$ in (\ref{s37}) and using the Cauchy-Schwarz inequality and Young's inequality, we obtain
\begin{equation*}
\begin{aligned}
&\frac{1}{\Delta t}\|z_{h}^{n}-z_{h}^{n-1}\|_{0,\Omega}^{2}
+\frac{1}{2}\|z_{h}^{n-1}\|_{\varepsilon,\Omega}^{2}-\frac{1}{2}\|z_{h}^{n}\|_{\varepsilon,\Omega}^{2}+\frac{1}{2}\|z_{h}^{n}-z_{h}^{n-1}\|_{\varepsilon,\Omega}^{2}\\
&\leq \frac{1}{2}\int_{I_{n}}\|\varphi\|_{0,\Omega}^{2}dt+\frac{1}{2\Delta t}\|z_{h}^{n}-z_{h}^{n-1}\|_{0,\Omega}^{2}.
\end{aligned}
\end{equation*}
Summation of the equations for $n = 1, 2, \ldots , M$ leads to
\begin{equation*}
\sum\limits_{n=1}^{M}\Delta t^{-1}\|z_{h}^{n}-z_{h}^{n-1}\|_{0,\Omega}^{2}+\|z_{h}^{0}\|^{2}_{\varepsilon,\Omega}
\leq C\sum\limits_{n=1}^{M}\int_{I_{n}}\|\varphi\|_{0,\Omega}^{2}dt.
\end{equation*}
\end{proof}
%\begin{proof}
%We choose $w_{h}=z_{h}^{n}$ and obtain
%\begin{equation*}
%\begin{aligned}
%&\frac{1}{2}\|z_{h}^{n}-z_{h}^{n-1}\|^{2}
%+\frac{1}{2}\|z_{h}^{n}\|^{2}-\frac{1}{2}\|z_{h}^{n-1}\|^{2}+\Delta t\|z_{h}^{n}\|_{\varepsilon}^{2}\\
%&=\int_{I_{n}}(f,z_{h}^{n})dt+\int_{I_{n}}(\psi,z_{h}^{n})_{\Gamma}dt\\
%&\leq C\int_{I_{n}}\|f\|_{0,\Omega}^{2}dt+\frac{\Delta t}{4}\|z_{h}^{n}\|_{\varepsilon}^{2}+C\int_{I_{n}}\|\psi\|_{0,\Gamma}^{2}dt+\frac{\Delta t}{4}\|z_{h}^{n}\|_{\varepsilon}^{2}
%\end{aligned}
%\end{equation*}
%Summation of the equations for $m = 1, 2, \ldots , M$ leads to
%\begin{equation*}
%\sum\limits_{m=1}^{M}\|z_{h}^{n}-z_{h}^{n-1}\|^{2}
%+\|z_{h}^{M}\|^{2}+\sum\limits_{m=1}^{M}\Delta t\|z_{h}^{n}\|_{\varepsilon}^{2}\\
%\leq C\sum\limits_{m=1}^{M}\int_{I_{n}}\|f\|_{0,\Omega}^{2}dt+C\sum\limits_{m=1}^{M}\int_{I_{n}}\|\psi\|_{0,\Gamma}^{2}dt.
%\end{equation*}
%\end{proof}
\begin{thm} For $u\in U_{ad}$, let $y(u)$ and $Y_{h}(u)$ be the solutions of equations \textup{(\ref{s21})} and \textup{(\ref{s32})}, respectively.  Then we have the following a priori error estimate:
\begin{equation*}
\|y(u)-Y_{h}(u)\|_{L^{2}(0,T;L^{2}(\Omega))}\leq C\Big(h^{2}\|y_{0}(u)\|_{\mathbb{X}}+\Delta t \|y_{t}(u)\|_{L^{2}(0,T;L^{2}(\Omega))}+h^{2}\|y(u)\|_{L^{2}(0,T;\mathbb{X})}\Big).
\end{equation*}
\end{thm}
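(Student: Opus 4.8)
The plan is to split the error $y(u) - Y_h(u)$ using the auxiliary interpolants $\overline{P}_k^n$ and $\overline{R}_h^n$ and a duality argument against the backward problem \eqref{s37}. Write $y^n - Y_h^n = (y^n - \overline{P}_k^n y) + (\overline{P}_k^n y - \overline{R}_h^n y) + (\overline{R}_h^n y - Y_h^n) =: \rho^n + \theta^n + \xi^n$. The first two pieces are controlled immediately by the interpolation estimates \eqref{s4666} and \eqref{s4667}, together with Lemma 3.1 for the elliptic projection (absorbing $\|R_h y_0 - y_0\|$ into the $h^2\|y_0\|_{\mathbb X}$ term via $\xi^0$). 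So the real work is to estimate $\big(\sum_n \Delta t \|\xi^n\|_{0,\Omega}^2\big)^{1/2}$, where $\xi_h := \overline{R}_h y - Y_h$ lives in $\mathbb{S}_h \cap H_0^1(\Omega)$.

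First I would derive the error equation satisfied by $\xi_h^n$: subtract \eqref{s32} from the continuous weak formulation \eqref{s21} tested against $w_h \in \mathbb{S}_h \cap H_0^1$, using Galerkin orthogonality $a(y - R_h y, w_h) = 0$ and the definition of $\overline{P}_k^n$, $\overline{R}_h^n$. This yields, for all $w_h$,
\begin{equation*}
\Big(\frac{\xi_h^n - \xi_h^{n-1}}{\Delta t}, w_h\Big) + a(\xi_h^n, w_h) = \Big(\frac{\overline{R}_h^n y - \overline{R}_h^{n-1} y}{\Delta t} - \overline{P}_k^n y_t, w_h\Big),
\end{equation*}
since the interface/source contributions of $f$, $g+u$ cancel against the continuous equation after time-averaging. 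The right-hand side is a consistency term of the form $(\delta^n, w_h)$ with $\delta^n = \big(\overline{R}_h^n y - \overline{R}_h^{n-1} y\big)/\Delta t - \overline{P}_k^n y_t$, which by adding and subtracting $\big(\overline{P}_k^n y - \overline{P}_k^{n-1} y\big)/\Delta t$ and using that this difference equals $\overline{P}_k^n y_t$ (by the fundamental theorem of calculus on $I_n$) reduces to a pure spatial-projection term: $\delta^n = \big((\overline{R}_h^n - \overline{P}_k^n) y - (\overline{R}_h^{n-1} - \overline{P}_k^{n-1}) y\big)/\Delta t = \overline{R}_h^n (y - R_h\,\text{stuff})$... more precisely $\delta^n = \tfrac{1}{\Delta t}\big(\theta^n - \theta^{n-1}\big)$ with $\theta^n = \overline{R}_h^n y - \overline{P}_k^n y$, whose telescoping structure is friendly.

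Now I would run the duality argument: given the target $\xi_h$, solve the discrete backward problem \eqref{s37} with data $\varphi$ chosen so that its right-hand side reproduces $\sum_n \Delta t (\xi_h^n, \xi_h^n)$ — concretely take $\varphi|_{I_n} = \xi_h^n$ (or the piecewise-constant function equal to $\xi_h^n$ on $I_n$), let $z_h^n$ be the corresponding solution. Testing the $\xi_h$-error equation with $w_h = z_h^{n-1}$, testing the $z_h$-equation with $w_h = \xi_h^n$, summing over $n$ and using summation by parts in time (the boundary terms vanish because $\xi_h^0 = \overline{R}_h^0 y - Y_h^0 = R_h y_0 - R_h y_0 = 0$ and $z_h^M = 0$), the $a(\cdot,\cdot)$ terms cancel and one is left with
\begin{equation*}
\sum_{n=1}^M \Delta t \|\xi_h^n\|_{0,\Omega}^2 = \sum_{n=1}^M \big(\delta^n, z_h^{n-1}\big)\Delta t \le C\Big(\sum_n \Delta t\|\delta^n\|_{0,\Omega}^2\Big)^{1/2}\Big(\sum_n \Delta t \|z_h^{n-1}\|_{0,\Omega}^2\Big)^{1/2},
\end{equation*}
and Lemma 3.2 bounds $\sum_n \Delta t\|z_h^{n-1}\|_{0,\Omega}^2 \le C T \|z_h^0\|_{\varepsilon,\Omega}^2 \le CT\sum_n\int_{I_n}\|\varphi\|_{0,\Omega}^2 = CT\sum_n\Delta t\|\xi_h^n\|_{0,\Omega}^2$ — wait, this needs a Poincaré step to pass from the $\varepsilon$-norm of $z_h^0$ to the $\sum\Delta t\|z_h^{n-1}\|_0^2$ sum, which is exactly what the stability lemma combined with the discrete Gronwall/telescoping gives (alternatively re-run the energy estimate keeping the $\|z_h^n\|_\varepsilon^2$ terms). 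Dividing through by $\big(\sum\Delta t\|\xi_h^n\|_0^2\big)^{1/2}$ yields $\big(\sum_n\Delta t\|\xi_h^n\|_{0,\Omega}^2\big)^{1/2}\le C\big(\sum_n\Delta t\|\delta^n\|_{0,\Omega}^2\big)^{1/2}$.

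It remains to bound $\sum_n\Delta t\|\delta^n\|_{0,\Omega}^2$ by $h^4\|y\|_{L^2(0,T;\mathbb X)}^2$ (plus lower-order $\Delta t$ terms if the splitting of $\delta^n$ leaves one): since $\delta^n = \tfrac1{\Delta t}(\theta^n - \theta^{n-1})$ with $\theta^n = \overline{R}_h^n y - \overline{P}_k^n y = \tfrac1{\Delta t}\int_{I_n}(R_h y - y)\,dt$, one has $\theta^n - \theta^{n-1} = \tfrac1{\Delta t}\int_{I_n}(R_h-I)y\,dt - \tfrac1{\Delta t}\int_{I_{n-1}}(R_h-I)y\,dt$; estimating crudely $\|\theta^n - \theta^{n-1}\|_0 \le \|\theta^n\|_0 + \|\theta^{n-1}\|_0$ and using \eqref{s4667}-type bounds $\|\theta^n\|_0 \le Ch^2(\tfrac1{\Delta t}\int_{I_n}\|y\|_{\mathbb X}\,dt)$ gives $\Delta t\|\delta^n\|_0^2 \le \tfrac{C}{\Delta t}h^4(\int_{I_n}\|y\|_{\mathbb X}dt)^2 + \ldots$, and Cauchy–Schwarz in time turns $\tfrac1{\Delta t}(\int_{I_n}\|y\|_{\mathbb X}dt)^2 \le \int_{I_n}\|y\|_{\mathbb X}^2dt$, so the sum is $\le Ch^4\|y\|_{L^2(0,T;\mathbb X)}^2$. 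Combining $\rho$, $\theta$, $\xi$ via the triangle inequality gives the claimed bound. \textbf{Main obstacle:} handling the consistency term $\delta^n$ cleanly — the naive bound $\|\theta^n-\theta^{n-1}\|\le\|\theta^n\|+\|\theta^{n-1}\|$ loses a $\Delta t$ factor in $\|\delta^n\| = \|\theta^n-\theta^{n-1}\|/\Delta t$, which (after the $\Delta t$ weight and the $1/\Delta t$ from Cauchy–Schwarz in time) actually balances out to the right $h^4$ order but requires care to see that the $\Delta t$'s cancel and no spurious $\Delta t^{-1}$ survives; the subtle point is that $R_h$ and the time-averaging commute, so $\theta^n$ is genuinely a time-average of the elliptic projection error and Lemma 3.1 applies pointwise in $t$ before averaging. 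A secondary technical point is making the backward-problem duality bookkeeping (which index pairs with which, summation by parts) match the "$z_h^{n-1}$" appearing in \eqref{s37} rather than $z_h^n$.
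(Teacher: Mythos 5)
Your overall strategy is the same duality argument the paper uses: peel off the time-averaging and elliptic-projection errors via \eqref{s4666}--\eqref{s4667}, and estimate $\xi^n=\overline{R}_h^n y-Y_h^n$ by testing against the backward discrete problem \eqref{s37} with data $\varphi|_{I_n}=\xi^n$; the identity $\sum_n\Delta t\|\xi^n\|_{0,\Omega}^2=\sum_n\Delta t(\delta^n,z_h^{n-1})$ you arrive at is correct (the $a(\cdot,\cdot)$ terms cancel and the telescoped boundary terms vanish since $\xi^0=0$, $z_h^M=0$). The problems are in how you treat the consistency term. First, your identity for $\delta^n$ is wrong: $(\overline{P}_k^n y-\overline{P}_k^{n-1}y)/\Delta t\neq\overline{P}_k^n y_t$ — the right-hand side equals $(y^n-y^{n-1})/\Delta t$ by the fundamental theorem of calculus, while the left-hand side is a difference of time averages. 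The correct statement is $\delta^n=(\eta^n-\eta^{n-1})/\Delta t$ with $\eta^n=\overline{R}_h^n y-y^n$ for $n\geq1$ and $\eta^0=R_hy_0-y_0$; in particular the initial projection error enters through $\delta^1$ (this is where $h^2\|y_0\|_{\mathbb{X}}$ comes from, not through $\xi^0$, which is zero), and a $\Delta t\|y_t\|$ contribution is unavoidable, not an optional leftover.

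Second, and decisively, the step ``Cauchy--Schwarz with $\|\delta^n\|_{0,\Omega}$ and $\|z_h^{n-1}\|_{0,\Omega}$, then bound $\|\eta^n-\eta^{n-1}\|\leq\|\eta^n\|+\|\eta^{n-1}\|$'' does not deliver the claimed rate. Since $\|\delta^n\|\leq\Delta t^{-1}(\|\eta^n\|+\|\eta^{n-1}\|)$ and $\sum_n\Delta t\|\eta^n\|^2\leq C(\Delta t\|y_t\|+h^2\|y\|_{L^2(0,T;\mathbb{X})})^2$, one gets only $\big(\sum_n\Delta t\|\delta^n\|^2\big)^{1/2}\leq C\big(\|y_t\|+h^2\Delta t^{-1}\|y\|_{L^2(0,T;\mathbb{X})}\big)$: the $\Delta t$'s do not ``balance out,'' and your final-paragraph arithmetic silently drops a factor $\Delta t^{-2}$ (from $\|\theta^n\|^2\leq Ch^4\Delta t^{-2}\big(\int_{I_n}\|y\|_{\mathbb{X}}dt\big)^2$ the correct intermediate bound is $\Delta t\|\delta^n\|^2\leq C\Delta t^{-3}h^4\big(\int_{I_n}\|y\|_{\mathbb{X}}dt\big)^2$, not $C\Delta t^{-1}h^4(\cdot)^2$). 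With $\Delta t\sim h^2$ this gives an $O(1)$ bound. Repairing it by writing $\eta^n-\eta^{n-1}$ as a time integral of $\partial_t(R_hy-y)$ would require $y_t\in L^2(0,T;\mathbb{X})$, regularity the theorem does not assume. The paper's proof avoids measuring $\delta^n$ in $L^2$ altogether: summation by parts gives $\sum_n(\eta^n-\eta^{n-1},z_h^{n-1})=\sum_n(\eta^n,z_h^{n-1}-z_h^n)-(\eta^0,z_h^0)$, and Cauchy--Schwarz with weights $\Delta t$ and $\Delta t^{-1}$ combined with Lemma 3.2 — which controls exactly $\sum_n\Delta t^{-1}\|z_h^n-z_h^{n-1}\|_{0,\Omega}^2$ and $\|z_h^0\|_{\varepsilon,\Omega}^2$ by $\sum_n\int_{I_n}\|\xi^n\|^2dt$ — yields the stated bound after dividing by $\big(\sum_n\Delta t\|\xi^n\|^2\big)^{1/2}$. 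Replace your $L^2$-bound of $\delta^n$ by this summation-by-parts step and the rest of your argument matches the paper.
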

\begin{proof}
We split the error
\begin{equation}\label{s38}
\begin{aligned}
\|y(u)-Y_{h}(u)\|_{L^{2}(0,T;L^{2}(\Omega))}&\leq \Big(\sum\limits_{n=1}^{M}\int_{I_{n}}\|y(u)-\overline{R}_{h}^{n}y(u)\|_{0,\Omega}^{2}dt\Big)^{\frac{1}{2}}\\
&\quad+\Big(\sum\limits_{n=1}^{M}\int_{I_{n}}\|\overline{R}_{h}^{n}y(u)-Y_{h}^{n}(u)\|_{0,\Omega}^{2}dt\Big)^{\frac{1}{2}}
\end{aligned}
\end{equation}
and estimate both terms on the right-hand side separately. For the first term, using (\ref{s4667}) we obtain
\begin{equation}\label{s39}
\Big(\sum\limits_{n=1}^{M}\int_{I_{n}}\|y(u)-\overline{R}_{h}^{n}y(u)\|_{0,\Omega}^{2}dt\Big)^{\frac{1}{2}}\leq C\Big(\Delta t \|y_{t}(u)\|_{L^{2}(0,T;L^{2}(\Omega))}+h^{2}\|y(u)\|_{L^{2}(0,T;\mathbb{X})}\Big).
\end{equation}
Let $z_{h}^{n}$ be the solution of problem (\ref{s37}) with $\varphi=\overline{R}_{h}^{n}y(u)-Y_{h}^{n}(u)$. Choose $w_{h}=\overline{R}_{h}^{n}y(u)-Y_{h}^{n}(u)$ in (\ref{s37}) and summing in time, we obtain
%\begin{equation*}
\begin{align*}
&\sum\limits_{n=1}^{M}\int_{I_{n}}\|\overline{R}_{h}^{n}y(u)-Y_{h}^{n}(u)\|_{0,\Omega}^{2}dt\\
&=\sum\limits_{n=1}^{M}\big(z_{h}^{n-1}-z_{h}^{n},\overline{R}_{h}^{n}y(u)-Y_{h}^{n}(u)\big)+\sum\limits_{n=1}^{M}\Delta ta\big(z_{h}^{n-1},\overline{R}_{h}^{n}y(u)-Y_{h}^{n}(u)\big)\\
&=\sum\limits_{n=1}^{M}\big(z_{h}^{n-1}-z_{h}^{n},\overline{R}_{h}^{n}y(u)\big)+\sum\limits_{n=1}^{M}\Delta ta\big(z_{h}^{n-1},\overline{R}_{h}^{n}y(u)\big)\\
&\quad-\sum\limits_{n=1}^{M}\big(z_{h}^{n-1}-z_{h}^{n},Y_{h}^{n}(u)\big)-\sum\limits_{n=1}^{M}\Delta ta\big(z_{h}^{n-1},Y_{h}^{n}(u)\big)\\
&=\sum\limits_{n=1}^{M}\big(z_{h}^{n-1}-z_{h}^{n},\overline{R}_{h}^{n}y(u)\big)+\sum\limits_{n=1}^{M}\Delta ta\big(z_{h}^{n-1},\overline{R}_{h}^{n}y(u)\big)-(z_{h}^{0},Y_{h}^{0}(u))\\
&\quad-\sum\limits_{n=1}^{M}\big(z_{h}^{n-1},Y_{h}^{n}(u)-Y_{h}^{n-1}(u)\big)-\sum\limits_{n=1}^{M}\Delta ta\big(z_{h}^{n-1},Y_{h}^{n}(u)\big)\\
&=\sum\limits_{n=1}^{M}\big(z_{h}^{n-1}-z_{h}^{n},\overline{R}_{h}^{n}y(u)\big)+\sum\limits_{n=1}^{M}\Delta ta\big(z_{h}^{n-1},\overline{R}_{h}^{n}y(u)\big)-(z_{h}^{0},Y_{h}^{0}(u))\\
&\quad-\sum\limits_{n=1}^{M}\int_{I_{n}}(f,z_{h}^{n-1})dt-\sum\limits_{n=1}^{M}\int_{I_{n}}\langle g+u,z_{h}^{n-1}\rangle_{\Gamma}dt,
\end{align*}
%\end{equation*}
where the last line follows from (\ref{s32}) and $z_{h}^{M}=0$. From (\ref{s21}) and the definition of $ \overline{R}_{h}^{n}$ this becomes
\begin{align}\label{s310}
&\sum\limits_{n=1}^{M}\int_{I_{n}}\|\overline{R}_{h}^{n}y(u)-Y_{h}^{n}(u)\|_{0,\Omega}^{2}dt\\
&=\sum\limits_{n=1}^{M}\big(z_{h}^{n-1}-z_{h}^{n},\overline{R}_{h}^{n}y(u)\big)+\sum\limits_{n=1}^{M}\Delta ta\big(z_{h}^{n-1},\overline{R}_{h}^{n}y(u)\big)-(z_{h}^{0},R_{h}y_{0})\\
&\quad-\sum\limits_{n=1}^{M}\big(z_{h}^{n-1},y^{n}(u)-y^{n-1}(u)\big)-\sum\limits_{n=1}^{M}\Delta ta\big(z_{h}^{n-1},P_{k}^{n}y(u)\big)\\
&=\sum\limits_{n=1}^{M}\big(z_{h}^{n-1}-z_{h}^{n},\overline{R}_{h}^{n}y(u)-y^{n}(u)\big)+(y_{0}-R_{h}y_{0},z_{h}^{0})\\
&=E_{1}+E_{2}.
\end{align}
For the term $E_{1}$, using (\ref{s4666})-(\ref{s4667}) and Lemma 3.2 leads to
\begin{equation*}
\begin{aligned}
E_{1}&\leq\Big(\sum\limits_{n=1}^{M}\Delta t^{-1}\|z_{h}^{n-1}-z_{h}^{n}\|_{0,\Omega}^{2}\Big)^{\frac{1}{2}}\Big(\sum\limits_{n=1}^{M}\Delta t\|\overline{R}_{h}^{n}y(u)-y^{n}(u)\|_{0,\Omega}^{2}\Big)^{\frac{1}{2}}\\
&\leq C\Big(\sum\limits_{n=1}^{M}\int_{I_{n}}\|\overline{R}_{h}^{n}y(u)-Y_{h}^{n}(u)\|_{0,\Omega}^{2}dt\Big)^{\frac{1}{2}}\Big(\Delta t \|y_{t}(u)\|_{L^{2}(0,T;L^{2}(\Omega))}+h^{2}\|y(u)\|_{L^{2}(0,T;\mathbb{X})}\Big).
\end{aligned}
\end{equation*}
Using Lemma 3.1 and Lemma 3.2, the term $E_{2}$ is estimated as
\begin{equation*}
E_{2}\leq\|y_{0}-R_{h}y_{0}\|_{0,\Omega}\|z_{h}^{0}\|_{0,\Omega}\leq Ch^{2}\|y_{0}(u)\|_{\mathbb{X}}\Big(\sum\limits_{n=1}^{M}\int_{I_{n}}\|\overline{R}_{h}^{n}y(u)-Y_{h}^{n}(u)\|_{0,\Omega}^{2}dt\Big)^{\frac{1}{2}}.
\end{equation*}
Inserting the estimates for $E_{1}$ and $E_{2}$ into (\ref{s310}) yields
\begin{equation}\label{s313}
\begin{aligned}
&\Big(\sum\limits_{n=1}^{M}\int_{I_{n}}\|\overline{R}_{h}^{n}y(u)-Y_{h}^{n}(u)\|_{0,\Omega}^{2}dt\Big)^{\frac{1}{2}}\\
&\leq C\Big(h^{2}\|y_{0}(u)\|_{\mathbb{X}}+\Delta t \|y_{t}(u)\|_{L^{2}(0,T;L^{2}(\Omega))}+h^{2}\|y(u)\|_{L^{2}(0,T;\mathbb{X})}\Big).
\end{aligned}
\end{equation}
By inserting the estimates (\ref{s39}) and (\ref{s313}) into
(\ref{s38}), the proof is completed.
\end{proof}
Based on the discretization of the state equation (\ref{s32}), for the discretization of the control variable we use the variational discretization approach proposed by Hinze in  \cite{Hinze2005}. The fully discrete approximation scheme of the optimal control problem is given as follows:
\begin{equation}\label{s33}
\min\limits_{u\in U_{ad}} J(Y_{h}(u),u)=\frac{1}{2}\sum\limits_{n=1}^{M}\int_{I_{n}} \| Y_{h}^{n}(u)-y_{d}\|_{L^{2}(\Omega)}^{2} dt+\frac{\alpha}{2}\sum\limits_{n=1}^{M}\int_{I_{n}} \| u\|_{L^{2}(\Gamma)}^{2}dt,
\end{equation}
subject to
\begin{equation}\label{s34}
\begin{aligned}
\Big(\frac{Y_{h}^{n}(u)-Y_{h}^{n-1}(u)}{\Delta t},w_{h}\Big)+a(Y_{h}^{n}(u),w_{h})=\frac{1}{\Delta t}\int_{I_{n}}(f,w_{h})dt+\frac{1}{\Delta t}\int_{I_{n}}\langle g+u,w_{h}\rangle_{\Gamma}dt,&\\
\forall w_{h} \in \mathbb{S}_{h}\cap H_{0}^{1}(\Omega),~n=1,2,\cdots,M, ~\text{with}~ Y_{h}^{0}(u)=R_{h}y_{0}.&\\
\end{aligned}
\end{equation}
This problem admits a unique solution $(Y_{h}^{n},U_{h})\in (\mathbb{S}_{h} \cap H_{0}^{1}(\Omega))\times U_{ad}$. Moreover, we have the following first order optimality conditions: There exists a unique discrete adjoint state $P_{h}^{n} \in \mathbb{S}_{h}\cap H_{0}^{1}(\Omega)$ such that
\begin{equation}\label{s35}
\begin{aligned}
\Big(\frac{P_{h}^{n-1}-P_{h}^{n}}{\Delta t},w_{h}\Big)+a(P_{h}^{n-1},w_{h})=\frac{1}{\Delta t}\int_{I_{n}}(Y_{h}^{n}-y_{d},w_{h})dt,&\\
\forall w_{h} \in \mathbb{S}_{h}\cap H_{0}^{1}(\Omega), ~n=M, M-1,\cdots,1, ~\text{with}~ P_{h}^{M}=0,&\\
\end{aligned}
\end{equation}
and
\begin{equation}\label{s36}
\sum\limits_{n=1}^{M}\int_{I_{n}}\langle\alpha U_{h}+P_{h}^{n-1},v-U_{h}\rangle_{\Gamma}dt\geq 0,\quad \forall v\in U_{ad}.
\end{equation}
In the following we denote $Y_h|_{I_{n}}=Y_{h}^{n}$ and $P_h\mid_{I_{n}}=P_{h}^{n-1}$, for $ n=1,2,\cdots,M$, where $Y_{h}^{n}$ and $P_{h}^{n-1}$ are solutions of equations (\ref{s34}) and  (\ref{s35}), respectively. %It is obvious that $Y_h$ is piecewise constant in time.
\section{Error Analysis of Optimal Control Problems}
For the subsequent analysis, it is convenient to introduce the following two auxiliary problems. For a given $u\in U_{ad}$, let $y:=y(u)$ be the solution of the state equation (\ref{s21}). For given $y$, find $p(y)\in L^{2}(0,T;L^{2}(\Omega))$ and $P_{h}^{n}(y) \in \mathbb{S}_{h}\cap H_{0}^{1}(\Omega)$ satisfying
\begin{equation}\label{s41}
\begin{aligned}
-(p_{t}(y),w)+a(p(y),w)&=(y-y_{d},w), \quad \forall  w\in H^{1}_{0}(\Omega), ~t\in (0,T),\\
p(y)(x,T)&=0,\quad x\in \Omega,
\end{aligned}
\end{equation}
and
\begin{equation}\label{s42}
\begin{aligned}
\Big(\frac{P_{h}^{n-1}(y)-P_{h}^{n}(y)}{\Delta t},w_{h}\Big)+a(P_{h}^{n-1}(y),w_{h})=\frac{1}{\Delta t}\int_{I_{n}}(y-y_{d},w_{h})dt,&\\
\forall w_{h} \in \mathbb{S}_{h}\cap H_{0}^{1}(\Omega), n=M, M-1,\cdots,1, ~\text{with}~ P_{h}^{M}(y)=0,&\\
\end{aligned}
\end{equation}
respectively. Let $P_{h}(y)|_{I_{n}}=P_{h}^{n-1}(y),~ n=1,2,\cdots,M$.
\begin{thm} Assume that $p(y)$ and $P_{h}(y)$ are the solutions of equations \textup{(\ref{s41})} and \textup{(\ref{s42})}, respectively.  Then we have the following an a priori error estimate:
\begin{equation*}
\begin{aligned}
\Big(\sum\limits_{n=1}^{M}\int_{I_{n}}\|\overline{R}_{h}^{n}p(y)-P_{h}^{n-1}(y)\|_{0,\Omega}^{2}dt\Big)^{\frac{1}{2}}&\leq C\Big(\Delta t \|p_{t}(y)\|_{L^{2}(0,T;L^{2}(\Omega))}+h^{2}\|p(y)\|_{L^{2}(0,T;\mathbb{X})}\Big),\\
\Big(\sum\limits_{n=1}^{M}\int_{I_{n}}\|\overline{R}_{h}^{n}p(y)-P_{h}^{n-1}(y)\|_{\varepsilon,\Omega}^{2}dt\Big)^{\frac{1}{2}}&\leq C\Big(\Delta t^{\frac{1}{2}}\|p_{t}(y)\|_{L^{2}(0,T;L^{2}(\Omega))}^{\frac{1}{2}}+h\|p(y)\|_{L^{2}(0,T;\mathbb{X})}^{\frac{1}{2}}\Big)\\
&\quad\times\Big(\|p_{t}(y)\|_{L^{2}(0,T;L^{2}(\Omega))}^{\frac{1}{2}}+\|y-y_{d}\|_{L^{2}(0,T;L^{2}(\Omega))}^{\frac{1}{2}}\Big)
.\\
\end{aligned}
\end{equation*}
\end{thm}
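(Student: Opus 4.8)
The plan is to prove the two bounds in sequence — the $L^2$-bound first, since the energy-norm bound will feed on it; throughout I abbreviate $p:=p(y)$ and $P_h^n:=P_h^n(y)$. For the first estimate, observe that under the time reversal $t\mapsto T-t$ the pair \eqref{s41}--\eqref{s42} becomes a forward parabolic equation discretised by backward Euler in time and SGFEM in space, with vanishing initial datum and the domain source $(y-y_d)(T-\cdot)$ in place of $(f,\cdot)+\langle g+u,\cdot\rangle_{\Gamma}$. Consequently $\bigl(\sum_n\int_{I_n}\|\overline{R}_h^{\,n}p-P_h^{n-1}\|_{0,\Omega}^2\,dt\bigr)^{1/2}$ is exactly the quantity bounded in \eqref{s313} inside the proof of Theorem 3.1, and the same argument carries over with only notational changes: introduce the auxiliary problem \eqref{s37} with right-hand side $\varphi=\overline{R}_h^{\,n}p-P_h^{n-1}$, test with $\varphi$, sum in time, substitute \eqref{s42}, \eqref{s41} and the definition of $\overline{R}_h^{\,n}$, and close with Lemma 3.2 and the interpolation bounds \eqref{s4666}--\eqref{s4667}. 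Since here the initial datum vanishes ($p(T)=0=P_h^M$, and one sets $\overline{R}_h^{\,M+1}p:=R_h p(T)=0$), the contribution that produced the $h^2\|y_0\|_{\mathbb{X}}$-term in Theorem 3.1 is absent, which is why the first bound carries only the $\Delta t$- and $h^2$-terms.

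For the energy-norm bound, put $\xi^n:=\overline{R}_h^{\,n}p-P_h^{n-1}\in\mathbb{S}_h\cap H_0^1(\Omega)$ with $\xi^{M+1}:=0$. Using that $R_h$ commutes with time averaging and that $a(p(t)-R_h p(t),w_h)=0$, one has $a(\overline{R}_h^{\,n}p,w_h)=\frac1{\Delta t}\int_{I_n}a(p,w_h)\,dt$; combining this with \eqref{s41}, with the identity $\frac1{\Delta t}\int_{I_n}(p_t,w_h)\,dt=\frac1{\Delta t}(p^n-p^{n-1},w_h)$, and subtracting \eqref{s42} produces the discrete error equation
\[
\Bigl(\frac{\xi^n-\xi^{n+1}}{\Delta t},w_h\Bigr)+a(\xi^n,w_h)=(g^n,w_h)\qquad\forall\,w_h\in\mathbb{S}_h\cap H_0^1(\Omega),
\]
with residual $g^n:=\frac1{\Delta t}\bigl[(p^n-p^{n-1})+(\overline{R}_h^{\,n}p-\overline{R}_h^{\,n+1}p)\bigr]$. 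Testing with $w_h=\xi^n$, using $(\xi^n-\xi^{n+1},\xi^n)=\tfrac12\|\xi^n\|_{0,\Omega}^2-\tfrac12\|\xi^{n+1}\|_{0,\Omega}^2+\tfrac12\|\xi^n-\xi^{n+1}\|_{0,\Omega}^2$, multiplying by $\Delta t$ and summing over $n=1,\dots,M$ (the $L^2$-terms telescope to $\tfrac12\|\xi^1\|_{0,\Omega}^2$ because $\xi^{M+1}=0$) gives
\[
\sum_{n=1}^M\Delta t\,\|\xi^n\|_{\varepsilon,\Omega}^2\le\Bigl(\sum_{n=1}^M\Delta t\,\|g^n\|_{0,\Omega}^2\Bigr)^{1/2}\Bigl(\sum_{n=1}^M\Delta t\,\|\xi^n\|_{0,\Omega}^2\Bigr)^{1/2}.
\]
Since $\int_{I_n}\|\xi^n\|_{0,\Omega}^2\,dt=\Delta t\,\|\xi^n\|_{0,\Omega}^2$, the last factor is exactly the $L^2$-bound just proved; taking a square root and using $(a+b)^{1/2}\le a^{1/2}+b^{1/2}$ then reproduces the product structure of the statement, \emph{provided} one shows $\bigl(\sum_n\Delta t\,\|g^n\|_{0,\Omega}^2\bigr)^{1/2}\le C\bigl(\|p_t\|_{L^2(0,T;L^2(\Omega))}+\|y-y_d\|_{L^2(0,T;L^2(\Omega))}\bigr)$.

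The bound on $g^n$ is the crux. Its first piece, $\frac1{\Delta t}(p^n-p^{n-1})=\frac1{\Delta t}\int_{I_n}p_t$, contributes $\|p_t\|_{L^2(0,T;L^2(\Omega))}$ after a Cauchy--Schwarz in time. The second piece, $\frac1{\Delta t}(\overline{R}_h^{\,n}p-\overline{R}_h^{\,n+1}p)=\frac1{\Delta t}R_h(\overline{P}_k^{\,n}p-\overline{P}_k^{\,n+1}p)$, is a difference of averages over adjacent time intervals: one factor of $\Delta t$ is recovered against $\int_{I_n\cup I_{n+1}}\|p_t\|_{0,\Omega}\,dt$, but passing $R_h$ through in the $L^2$-norm is not immediate on the interface-fitted SGFEM space, and circumventing this by using \eqref{s42} to rewrite $\frac1{\Delta t}(P_h^{n-1}-P_h^n)$ in terms of $y-y_d$ and $a(P_h^{n-1},\cdot)$ is what introduces the factor $\|y-y_d\|_{L^2(0,T;L^2(\Omega))}$ and hence forces the square-root-of-a-product form of the estimate. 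Once $g^n$ is under control, the remaining steps are routine interpolation-in-time arguments together with Lemma 3.1.
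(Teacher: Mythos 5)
Your first inequality is fine and coincides with the paper's own treatment: the paper simply repeats the duality argument of Theorem 3.1 (auxiliary problem (\ref{s37}), Lemma 3.2, and the bounds (\ref{s4666})--(\ref{s4667})), with no initial-data term because $p(y)(T)=0=P_{h}^{M}(y)$, exactly as you observe.

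The energy-norm bound, however, contains a genuine gap at the point you yourself flag as the crux. Your telescoped error equation with residual $g^{n}=\frac{1}{\Delta t}\big[(p^{n}-p^{n-1})+R_{h}(\overline{P}_{k}^{n}p-\overline{P}_{k}^{n+1}p)\big]$ needs $\big(\sum_{n}\Delta t\,\|g^{n}\|_{0,\Omega}^{2}\big)^{1/2}\leq C\big(\|p_{t}\|_{L^{2}(0,T;L^{2}(\Omega))}+\|y-y_{d}\|_{L^{2}(0,T;L^{2}(\Omega))}\big)$, and this is never proved. The troublesome piece cannot be handled directly: $L^{2}$-stability of the Ritz projection $R_{h}$ on $\mathbb{S}_{h}$ is not available here, and estimating $\|R_{h}v\|_{0,\Omega}\leq\|v\|_{0,\Omega}+Ch^{2}\|v\|_{\mathbb{X}}$ via Lemma 3.1 applied to $v=\overline{P}_{k}^{n}p-\overline{P}_{k}^{n+1}p$ costs a factor of order $h^{2}/\Delta t$, so your route only closes under a mesh condition such as $h^{2}\lesssim\Delta t$, which is not assumed. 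The proposed circumvention (``rewrite $\frac{1}{\Delta t}(P_{h}^{n-1}-P_{h}^{n})$ via (\ref{s42}) in terms of $y-y_{d}$ and $a(P_{h}^{n-1},\cdot)$'') is only a gesture: pointwise in $n$ that identity does not control the difference quotient in $L^{2}$, since the term $a(P_{h}^{n-1},\cdot)$ is not bounded by $\|y-y_{d}\|$; and once you substitute the $P_{h}$-difference back you are no longer estimating a single residual, so ``the remaining steps are routine'' conceals the actual restructuring. The missing ingredient — and the paper's actual argument — is to avoid forming $g^{n}$ altogether: subtract (\ref{s42}) from the time-integrated (\ref{s41}) tested with $\xi^{n}=\overline{R}_{h}^{n}p(y)-P_{h}^{n-1}(y)$, keep the two increments separate so that $\Delta t\,a(\xi^{n},\xi^{n})\leq\big(\|p^{n-1}-p^{n}\|_{0,\Omega}+\|P_{h}^{n-1}(y)-P_{h}^{n}(y)\|_{0,\Omega}\big)\|\xi^{n}\|_{0,\Omega}$, bound $\sum_{n}\Delta t^{-1}\|p^{n-1}-p^{n}\|_{0,\Omega}^{2}\leq\|p_{t}\|_{L^{2}(0,T;L^{2}(\Omega))}^{2}$, and — crucially — establish the discrete stability estimate $\sum_{n}\Delta t^{-1}\|P_{h}^{n-1}(y)-P_{h}^{n}(y)\|_{0,\Omega}^{2}\leq C\|y-y_{d}\|_{L^{2}(0,T;L^{2}(\Omega))}^{2}$ by testing (\ref{s42}) with $w_{h}=P_{h}^{n-1}(y)-P_{h}^{n}(y)$ and summing, exactly as in Lemma 3.2 (this is (\ref{s45})). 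Cauchy--Schwarz in $n$ against the already-proved $L^{2}$ bound then yields the product form of the statement. Without this stability estimate for the increments of $P_{h}(y)$ (or an unstated mesh condition), your argument for the second inequality does not go through.
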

\begin{proof}
Similar to the proof for Theorem 3.1, we have
\begin{equation}\label{s46}
\Big(\sum\limits_{n=1}^{M}\int_{I_{n}}\|\overline{R}_{h}^{n}p(y)-P_{h}^{n-1}(y)\|_{0,\Omega}^{2}dt\Big)^{\frac{1}{2}}
\leq C\Big(\Delta t \|p_{t}(y)\|_{L^{2}(0,T;L^{2}(\Omega))}+h^{2}\|p(y)\|_{L^{2}(0,T;\mathbb{X})}\Big).
\end{equation}
Taking $w=\overline{R}_{h}^{n}p(y)-P_{h}^{n-1}(y)$ in (\ref{s41}) and $w_{h}=\overline{R}_{h}^{n}p(y)-P_{h}^{n-1}(y)$ in (\ref{s42}), we obtain
\begin{equation*}
\begin{aligned}
\big(p^{n-1}(y)-p^{n}(y),\overline{R}_{h}^{n}p(y)-P_{h}^{n-1}(y)\big)-\big(P_{h}^{n-1}(y)-P_{h}^{n}(y),\overline{R}_{h}^{n}p(y)-P_{h}^{n-1}(y)\big)\\
+\Delta ta\big(\overline{P}_{k}^{n}p(y)-P_{h}^{n-1}(y),\overline{R}_{h}^{n}p(y)-P_{h}^{n-1}(y)\big)=0.
\end{aligned}
\end{equation*}
With the Cauchy-Schwarz inequality and the definition of $\overline{R}_{h}^{n}$, we obtain
\begin{equation*}
\begin{aligned}
&\Delta ta\big(\overline{R}_{h}^{n}p(y)-P_{h}^{n-1}(y),\overline{R}_{h}^{n}p(y)-P_{h}^{n-1}(y)\big)\\
&\leq \|p^{n-1}(y)-p^{n}(y)\|_{0,\Omega}\|\overline{R}^{n}_{h}p(y)-P_{h}^{n-1}(y)\|_{0,\Omega}\\
&\quad+\|P_{h}^{n-1}(y)-P_{h}^{n}(y)\|_{0,\Omega}\|\overline{R}_{h}^{n}p(y)-P_{h}^{n-1}(y)\|_{0,\Omega}.
\end{aligned}
\end{equation*}
Summation of the above inequalities for  $n=1,2,\cdots,M$ and application of the Cauchy-Schwarz inequality admit
\begin{equation}\label{s43}
\begin{aligned}
&\sum\limits_{n=1}^{M}\Delta ta\big(\overline{R}_{h}^{n}p(y)-P_{h}^{n-1}(y),\overline{R}_{h}^{n}p(y)-P_{h}^{n-1}(y)\big)\\
&\leq \Big(\sum\limits_{n=1}^{M}\Delta t^{-1}\|p^{n-1}(y)-p^{n}(y)\|_{0,\Omega}^{2}\Big)^{\frac{1}{2}}\Big(\sum\limits_{n=1}^{M}\Delta t\|\overline{R}_{h}^{n}p(y)-P_{h}^{n-1}(y)\|_{0,\Omega}^{2}\Big)^{\frac{1}{2}}\\
&\quad+\Big(\sum\limits_{n=1}^{M}\Delta t^{-1}\|P_{h}^{n-1}(y)-P_{h}^{n}(y)\|_{0,\Omega}^{2}\Big)^{\frac{1}{2}}\Big(\sum\limits_{n=1}^{M}\Delta t\|\overline{R}_{h}^{n}p(y)-P_{h}^{n-1}(y)\|_{0,\Omega}^{2}\Big)^{\frac{1}{2}}.
\end{aligned}
\end{equation}
Simple calculation leads to
\begin{equation}\label{s44}
\begin{aligned}
\sum\limits_{n=1}^{M}\Delta t^{-1}\|p^{n-1}(y)-p^{n}(y)\|_{0,\Omega}^{2}\leq\sum\limits_{n=1}^{M}\int_{I_{n}}\|p_{t}(y)\|_{0,\Omega}^{2}dt=\|p_{t}(y)\|^{2}_{L^{2}(0,T;L^{2}(\Omega))}.
\end{aligned}
\end{equation}
Taking $w_{h}=P^{n-1}_{h}(y)-P_{h}^{n}(y)$ in (\ref{s42}), we can get the following result by the similar method in the proof of Lemma 3.2.
%\begin{equation*}
%\begin{aligned}
%&\|P^{n-1}_{h}(y)-P_{h}^{n}(y)\|^{2}+\Delta ta(p_{h}^{n-1}(y),P^{n-1}_{h}(y)-p_{h}^{n}(y))\\
%&\leq \int_{I_{n}}\|P^{n-1}_{h}(y)-p_{h}^{n}(y)\|\|y-y_{d}\|dt\\
%&\leq C\Delta t\int_{I_{n}}\|y-y_{d}\|^{2}dt+\frac{1}{2\Delta t}\int_{I_{n}}\|P^{n-1}_{h}(y)-p_{h}^{n}(y)\|^{2}dt
%\end{aligned}
%\end{equation*}
%Summation of the equations for $m=1,2,\ldots,M$ leads to
\begin{equation}\label{s45}
\sum\limits_{n=1}^{M}\Delta t^{-1}\|P^{n-1}_{h}(y)-P_{h}^{n}(y)\|_{0,\Omega}^{2}\leq C\sum\limits_{n=1}^{M}\int_{I_{n}}\|y-y_{d}\|_{0,\Omega}^{2}dt.
\end{equation}
Then from (\ref{s43}), (\ref{s44}), (\ref{s45}) and (\ref{s46}) we have
\begin{equation*}
\begin{aligned}
&\sum\limits_{n=1}^{M}\Delta ta\big(\overline{R}_{h}^{n}p(y)-P_{h}^{n-1}(y),\overline{R}_{h}^{n}p(y)-P_{h}^{n-1}(y)\big)\\
&\leq \Big(\|p_{t}(y)\|_{L^{2}(0,T;L^{2}(\Omega))}+\|y-y_{d}\|_{L^{2}(0,T;L^{2}(\Omega))}\Big)\\
&\quad \times\Big(\Delta t \|p_{t}(y)\|_{L^{2}(0,T;L^{2}(\Omega))}+h^{2}\|p(y)\|_{L^{2}(0,T;\mathbb{X})}\Big).
\end{aligned}
\end{equation*}
This completes the proof.
%We define $z_{h}$ to be solution of
%\begin{equation*}
%\Big(\frac{z_{h}^{n}-z_{h}^{n-1}}{\Delta t},w_{h}\Big)+a(z_{h}^{n},w_{h})=\frac{1}{\Delta t}\int_{I_{n}}(P_{h}^{n}p-p_{h}^{n-1}(y),w_{h})dt.
%\end{equation*}
%Letting $w_{h}=P_{h}^{n}p-p_{h}^{n-1}(y)$ and integrating in time, we obtain
%\begin{equation*}
%\begin{aligned}
%\sum\limits_{n=1}^{M}\int_{I_{n}}\|P_{h}^{n}p-p_{h}^{n-1}(y)\|^{2}dt&=\sum\limits_{n=1}^{M}\big(z_{h}^{n}-z_{h}^{n-1},P_{h}^{n}p-p_{h}^{n-1}(y)\big)+\Delta ta\big(z_{h}^{n},P_{h}^{n}p-p_{h}^{n-1}(y)\big)\\
%&=\sum\limits_{n=1}^{M}\big(z_{h}^{n}-z_{h}^{n-1},P_{h}^{n}p\big)+\Delta ta\big(z_{h}^{n},P_{h}^{n}p\big)\\
%&\quad-\sum\limits_{n=1}^{M}\big(z_{h}^{n}-z_{h}^{n-1},p_{h}^{n-1}(y)\big)+\Delta ta\big(z_{h}^{n},p_{h}^{n-1}(y)\big)\\
%&=\sum\limits_{n=1}^{M}\big(z_{h}^{n}-z_{h}^{n-1},P_{h}^{n}p\big)+\Delta ta\big(z_{h}^{n},P_{h}^{n}p\big)\\
%&\quad-\sum\limits_{n=1}^{M}\big(z_{h}^{n},p_{h}^{n-1}(y)-p_{h}^{n}(y)\big)+\Delta ta\big(z_{h}^{n},p_{h}^{n-1}(y)\big)\\
%&=\sum\limits_{n=1}^{M}\big(z_{h}^{n}-z_{h}^{n-1},P_{h}^{n}p\big)+\Delta ta\big(z_{h}^{n},P_{h}^{n}p\big)-\sum\limits_{n=1}^{M}\int_{I_{n}}(y-y_{d},z_{h}^{n})dt,
%\end{aligned}
%\end{equation*}
%where the last line follows from (\ref{s413}) and $z^{0}=0$. From (\ref{s413}) this becomes
\end{proof}
\begin{thm} Let $p(y)$ and $P_{h}(y)$ be the solutions of equations \textup{(\ref{s41})} and \textup{(\ref{s42})}, respectively.  Then we have the following an a priori error estimate:
\begin{equation*}
\begin{aligned}
\Big(\sum\limits_{n=1}^{M}\int_{I_{n}}\|p(y)-P_{h}^{n-1}(y)\|_{0,\Gamma}^{2}dt\Big)^{\frac{1}{2}}&\leq C(\Delta t^{3/4}+h^{3/2}+\Delta t^{1/2}h^{1/2}+\Delta t^{1/4}h).
\end{aligned}
\end{equation*}
\end{thm}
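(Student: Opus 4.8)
The plan is to reduce the interface estimate to the interior space--time estimates already available, namely Theorem 4.1 for the difference $\overline{R}_{h}^{n}p(y)-P_{h}^{n-1}(y)$ and Lemma 3.1 for the elliptic projection error, by exploiting the multiplicative trace inequality on the interior subdomain $\Omega^{-}$, whose boundary is exactly $\Gamma$. Concretely, for $t\in I_{n}$ I would start from the triangle inequality
$$\|p(y)-P_{h}^{n-1}(y)\|_{0,\Gamma}\le\|p(y)-\overline{R}_{h}^{n}p(y)\|_{0,\Gamma}+\|\overline{R}_{h}^{n}p(y)-P_{h}^{n-1}(y)\|_{0,\Gamma},$$
and use that for $w\in H^{1}(\Omega^{-})$ one has $\|w\|_{0,\Gamma}^{2}\le C\|w\|_{0,\Omega^{-}}\bigl(\|w\|_{0,\Omega^{-}}+\|\nabla w\|_{0,\Omega^{-}}\bigr)$; since $\beta$ is bounded below and all the functions involved are globally $H^{1}$ (so that their trace on $\Gamma$ is single-valued), this yields $\|w\|_{0,\Gamma}^{2}\le C\|w\|_{0,\Omega}^{2}+C\|w\|_{0,\Omega}\|w\|_{\varepsilon,\Omega}$ for each summand.

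For the finite-element part, put $e_{h}^{n}:=\overline{R}_{h}^{n}p(y)-P_{h}^{n-1}(y)$. Applying the trace inequality summand by summand and then the Cauchy--Schwarz inequality in the time index gives
$$\sum_{n=1}^{M}\int_{I_{n}}\|e_{h}^{n}\|_{0,\Gamma}^{2}\,dt\le C\,A\,(A+B),\qquad A:=\Bigl(\sum_{n=1}^{M}\int_{I_{n}}\|e_{h}^{n}\|_{0,\Omega}^{2}\,dt\Bigr)^{1/2},\quad B:=\Bigl(\sum_{n=1}^{M}\int_{I_{n}}\|e_{h}^{n}\|_{\varepsilon,\Omega}^{2}\,dt\Bigr)^{1/2}.$$
By Theorem 4.1, $A\le C(\Delta t+h^{2})$ and $B\le C(\Delta t^{1/2}+h)$, the norm factors $\|p_{t}(y)\|_{L^{2}(0,T;L^{2}(\Omega))}$, $\|p(y)\|_{L^{2}(0,T;\mathbb{X})}$ and $\|y-y_{d}\|_{L^{2}(0,T;L^{2}(\Omega))}$ being finite by the regularity results of Section 2. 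Since $\Delta t\le\Delta t^{1/2}$ and $h^{2}\le h$, it follows that $\sum_{n}\int_{I_{n}}\|e_{h}^{n}\|_{0,\Gamma}^{2}\,dt\le C(\Delta t+h^{2})(\Delta t^{1/2}+h)$, and taking square roots yields $C(\Delta t+h^{2})^{1/2}(\Delta t^{1/2}+h)^{1/2}\le C(\Delta t^{3/4}+\Delta t^{1/2}h^{1/2}+\Delta t^{1/4}h+h^{3/2})$, i.e.\ exactly the four terms of the claim.

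For the consistency part I would split $p(y)-\overline{R}_{h}^{n}p(y)=\bigl(p(y)-\overline{P}_{k}^{n}p(y)\bigr)+\frac{1}{\Delta t}\int_{I_{n}}\bigl(p(y)-R_{h}p(y)\bigr)\,ds$. For the elliptic-projection term, the trace inequality applied pointwise in $s$ together with Lemma 3.1 ($\|p-R_{h}p\|_{0,\Omega}\le Ch^{2}\|p\|_{\mathbb{X}}$, $\|p-R_{h}p\|_{\varepsilon,\Omega}\le Ch\|p\|_{\mathbb{X}}$) gives $\|(p-R_{h}p)(s)\|_{0,\Gamma}^{2}\le Ch^{3}\|p(s)\|_{\mathbb{X}}^{2}$; Jensen's inequality in $s$ and summation over $n$ then bound this contribution by $Ch^{3/2}\|p(y)\|_{L^{2}(0,T;\mathbb{X})}$. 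For the time-averaging term, I would use the trace inequality on $\Omega^{-}$ again together with the standard time-interpolation estimate $\bigl(\sum_{n}\int_{I_{n}}\|p(y)-\overline{P}_{k}^{n}p(y)\|_{0,\Omega}^{2}dt\bigr)^{1/2}\le C\Delta t\|p_{t}(y)\|_{L^{2}(0,T;L^{2}(\Omega))}$ and, using the piecewise-in-space regularity $p(y)\in H^{1}(0,T;H^{1}(\Omega^{+}\cup\Omega^{-}))$ (obtained by applying Lemma 2.1 to the adjoint equation (\ref{s41})), the analogous bound $\bigl(\sum_{n}\int_{I_{n}}\|p(y)-\overline{P}_{k}^{n}p(y)\|_{1,\Omega^{+}\cup\Omega^{-}}^{2}dt\bigr)^{1/2}\le C\Delta t\|p_{t}(y)\|_{L^{2}(0,T;H^{1}(\Omega^{+}\cup\Omega^{-}))}$; this makes the time-averaging contribution of order $\Delta t$ (alternatively, the anisotropic trace embedding $p(y)|_{\Gamma}\in H^{3/4}(0,T;L^{2}(\Gamma))$ gives order $\Delta t^{3/4}$ directly). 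Both are dominated by the $\Delta t^{3/4}$ already present; collecting the two contributions of the consistency part and adding the finite-element part yields the stated estimate.

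I expect the main obstacle to be the exponent bookkeeping: the trace inequality unavoidably halves the spatial and temporal convergence orders, so recovering the precise list $\Delta t^{3/4}+h^{3/2}+\Delta t^{1/2}h^{1/2}+\Delta t^{1/4}h$ requires pairing the full-order $L^{2}(\Omega)$ estimate $O(\Delta t+h^{2})$ with the reduced-order energy estimate $O(\Delta t^{1/2}+h)$ of Theorem 4.1 in exactly this multiplicative fashion. A cruder handling of the time-discretization error on $\Gamma$ --- for instance pairing the $O(\Delta t)$ bound in $L^{2}(\Omega)$ with only an $O(1)$ bound for the $H^{1}$-norm of $p(y)-\overline{P}_{k}^{n}p(y)$ --- would produce only $O(\Delta t^{1/2})$, which is worse than $\Delta t^{3/4}$, so exploiting either the additional time regularity of $p(y)$ or the sharp parabolic trace is the crucial point.
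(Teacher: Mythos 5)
Your proposal is correct and follows essentially the same route as the paper's proof: the multiplicative trace inequality on $\Omega^{\pm}$ (whose boundary contains $\Gamma$) together with the Cauchy--Schwarz inequality in the time index reduces the interface error to the product of the $L^{2}$-in-space and $H^{1}$-in-space space--time errors, which are then bounded via Theorem 4.1 and the projection/time-averaging estimates, giving $(\Delta t+h^{2})^{1/2}(\Delta t^{1/2}+h)^{1/2}$ and hence the four stated terms. The only difference is organizational: you split off the consistency terms $p(y)-\overline{R}_{h}^{n}p(y)$ explicitly and supply the energy-norm analogues of (\ref{s4666})--(\ref{s4667}) (using $p_{t}\in L^{2}(0,T;H^{1}(\Omega^{+}\cup\Omega^{-}))$), details the paper leaves implicit by applying the trace inequality directly to the total error.
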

\begin{proof}
Using the Cauchy-Schwarz inequality and trace estimate, we obtain
\begin{align*}
&\sum\limits_{n=1}^{M}\int_{I_{n}}\|p(y)-P_{h}^{n-1}(y)\|_{0,\Gamma}^{2}dt\\
&\leq C\Big(\sum\limits_{n=1}^{M}\int_{I_{n}}\|p(y)-P_{h}^{n-1}(y)\|_{0,\partial\Omega^{-}}^{2}dt+\sum\limits_{n=1}^{M}\int_{I_{n}}\|p(y)-P_{h}^{n-1}(y)\|_{0,\partial\Omega^{+}}^{2}dt\Big)\\
&\leq C\Big(\sum\limits_{n=1}^{M}\int_{I_{n}}\|p(y)-P_{h}^{n-1}(y)\|_{0,\Omega^{-}}^{2}dt\Big)^{\frac{1}{2}}\Big(\sum\limits_{n=1}^{M}\int_{I_{n}}\|p(y)-P_{h}^{n-1}(y)\|_{1,\Omega^{-}}^{2}dt\Big)^{\frac{1}{2}}\\
&\quad +C\Big(\sum\limits_{n=1}^{M}\int_{I_{n}}\|p(y)-P_{h}^{n-1}(y)\|_{0,\Omega^{+}}^{2}dt\Big)^{\frac{1}{2}}\Big(\sum\limits_{n=1}^{M}\int_{I_{n}}\|p(y)-P_{h}^{n-1}(y)\|_{1,\Omega^{+}}^{2}dt\Big)^{\frac{1}{2}}.
\end{align*}
This, together with (\ref{s4666}), (\ref{s4667}) and Theorem 4.1, completes the proof of the theorem.
\end{proof}
\begin{thm}
Let $(\overline{u},\overline{y},\overline{p})$ be the solution of problem \textup{(\ref{s22})-(\ref{s24})} and
$(U_{h},Y_{h},P_{h})$ be the solution of the discretized problem $\textup{(\ref{s33})-(\ref{s36})}$. Then the following estimate holds:
\begin{equation*}
\begin{aligned}
&\sqrt{\alpha} \| \overline{u}-U_{h}\|_{L^{2}(0,T;L^{2}(\Gamma))}+ \|\overline{y}-Y_{h}\|_{L^{2}(0,T;L^{2}(\Omega))}+\|\overline{p}-P_{h}\|_{L^{2}(0,T;L^{2}(\Omega))}\\
&\leq C(\Delta t^{3/4}+h^{3/2}+\Delta t^{1/2}h^{1/2}+\Delta t^{1/4}h).
\end{aligned}
\end{equation*}
\end{thm}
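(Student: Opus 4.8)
The plan is to combine the continuous and discrete variational inequalities, reduce the control error to errors between the continuous adjoint $\overline p$ and suitable discrete adjoint states, invoke Theorems 3.1, 4.1 and 4.2 (using that the regularity needed for these to apply with $y=\overline y$ is provided by Lemmas 2.4--2.5 and Remark 2.1), and finally transfer the control bound to the state and adjoint state.

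\textbf{Step 1 (control error from the optimality conditions).} Testing the continuous variational inequality \eqref{s24} with $v=U_h\in U_{ad}$ and the discrete one \eqref{s36} with $v=\overline u\in U_{ad}$, adding the two and recalling $P_h|_{I_n}=P_h^{n-1}$, one gets
\begin{equation*}
\alpha\|\overline u-U_h\|_{L^{2}(0,T;L^{2}(\Gamma))}^{2}\le \int_{0}^{T}\langle \overline p-P_h,\,U_h-\overline u\rangle_\Gamma\,dt .
\end{equation*}
Since the discrete adjoint $P_h$ is driven by $Y_h=Y_h(U_h)$, it coincides with $P_h(Y_h(U_h))$ in the notation of \eqref{s42}; using that $\overline p=p(\overline y)$ and that $P_h(\cdot)$ depends linearly on its source (so the $y_d$-parts cancel), I split
\begin{equation*}
\overline p-P_h=\bigl(\overline p-P_h(\overline y)\bigr)+P_h\bigl(\overline y-Y_h(\overline u)\bigr)+P_h\bigl(Y_h(\overline u)-Y_h(U_h)\bigr),
\end{equation*}
where $Y_h(\overline u)$ is the discrete state driven by the continuous optimal control.

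\textbf{Step 2 (the three contributions).} For the first term I use Theorem 4.2 with $y=\overline y$ and Cauchy--Schwarz, producing a factor $C(\Delta t^{3/4}+h^{3/2}+\Delta t^{1/2}h^{1/2}+\Delta t^{1/4}h)\,\|\overline u-U_h\|_{L^{2}(0,T;L^{2}(\Gamma))}$. For the second term I combine a trace inequality, the $L^2(0,T;\varepsilon(\Omega))$-stability of the discrete backward-Euler adjoint solver (proved by energy arguments as in Lemma 3.2 together with a discrete Gronwall inequality) and Theorem 3.1 with $u=\overline u$, giving $\|P_h(\overline y-Y_h(\overline u))\|_{L^{2}(0,T;L^{2}(\Gamma))}\le C\|\overline y-Y_h(\overline u)\|_{L^{2}(0,T;L^{2}(\Omega))}\le C(\Delta t+h^{2})$, again paired with $\|\overline u-U_h\|$. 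The decisive term is the third: writing $\zeta_h:=Y_h(\overline u)-Y_h(U_h)$, which solves the discrete state equation with interface source $\overline u-U_h$ and zero initial data, and $\psi_h:=P_h(\zeta_h)$, I test the $\zeta_h$-equation with $\psi_h^{n-1}$ and the $\psi_h$-equation with $\zeta_h^{n}$, sum over $n$, and use Abel summation together with $\zeta_h^{0}=0$, $\psi_h^{M}=0$ and the symmetry of $a(\cdot,\cdot)$ to obtain the discrete duality identity
\begin{equation*}
\int_{0}^{T}\langle \overline u-U_h,\psi_h\rangle_\Gamma\,dt=\sum_{n=1}^{M}\Delta t\,\|\zeta_h^{n}\|_{0,\Omega}^{2}\ge 0 ,
\end{equation*}
so this third contribution to $\int_0^T\langle\overline p-P_h,U_h-\overline u\rangle_\Gamma\,dt$ is non-positive and can be dropped. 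Inserting the first two estimates, absorbing $\|\overline u-U_h\|$ by Young's inequality and using $\Delta t\le\Delta t^{3/4}$, $h^{2}\le h^{3/2}$ gives $\sqrt\alpha\,\|\overline u-U_h\|_{L^{2}(0,T;L^{2}(\Gamma))}\le C(\Delta t^{3/4}+h^{3/2}+\Delta t^{1/2}h^{1/2}+\Delta t^{1/4}h)$.

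\textbf{Step 3 (state and adjoint errors) and main obstacle.} With the control error known, write $\overline y-Y_h=(\overline y-Y_h(\overline u))+\zeta_h$: the first part is $O(\Delta t+h^{2})$ by Theorem 3.1, and $\|\zeta_h\|_{L^{2}(0,T;L^{2}(\Omega))}\le C\|\overline u-U_h\|_{L^{2}(0,T;L^{2}(\Gamma))}$ by stability of the discrete state solver with interface data, which yields the claimed rate for $\|\overline y-Y_h\|$. Similarly $\overline p-P_h=(\overline p-P_h(\overline y))+P_h(\overline y-Y_h)$: the first part is $O(\Delta t+h^{2})$ in $L^{2}(0,T;L^{2}(\Omega))$ by Theorem 4.1 combined with the interpolation bounds \eqref{s4666}--\eqref{s4667} applied to $\overline p$, and the second is bounded by $C\|\overline y-Y_h\|_{L^{2}(0,T;L^{2}(\Omega))}$ by $L^{2}$-stability of the discrete adjoint solver; summing gives the estimate for $\|\overline p-P_h\|$. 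I expect the main obstacle to be Step 2's third term: establishing the discrete duality identity with the correct sign requires careful bookkeeping of the staggered time indices of the backward-Euler state ($Y_h^{n}$) and adjoint ($P_h^{n-1}$) iterates and of the boundary terms in the Abel summation, and this sign is precisely what allows one to avoid estimating $P_h(Y_h(\overline u)-Y_h(U_h))$ directly. A secondary technical point is supplying the $L^2(0,T;\varepsilon(\Omega))$- and $L^2(0,T;L^2(\Omega))$-stability bounds for the discrete state and adjoint solvers with $\Gamma$-data, which are not stated explicitly but follow from the energy argument used for Lemma 3.2.
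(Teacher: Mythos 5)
Your argument is correct, and its skeleton coincides with the paper's: add the two variational inequalities (\ref{s24}) and (\ref{s36}), compare with the auxiliary discrete problems $Y_h(\overline{u})$ and $P_h(\overline{y})$ from (\ref{s32}) and (\ref{s42}), and conclude with Theorems 3.1, 4.1 and 4.2. Where you genuinely diverge is in the treatment of the key term: the paper splits $P_h-\overline{p}$ into only two pieces, $P_h-P_h(\overline{y})$ and $P_h(\overline{y})-\overline{p}$, and handles the first by testing the two error equations (\ref{s411})--(\ref{s412}) against each other, which turns it into $\sum_n\int_{I_n}(\overline{y}-Y_h^n,\,Y_h^n-Y_h^n(\overline{u}))\,dt\le -\tfrac12\|\overline{y}-Y_h\|^2_{L^2(L^2)}+\tfrac12\|\overline{y}-Y_h(\overline{u})\|^2_{L^2(L^2)}$; the negative term is kept and moved to the left, so the state error estimate (\ref{s413}) comes out simultaneously with the control estimate, and no trace or $\varepsilon$-norm stability of the discrete solvers with $\Gamma$-data is ever needed. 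You instead split that same piece further, into $P_h[\overline{y}-Y_h(\overline{u})]$ and $P_h[\zeta_h]$ with $\zeta_h=Y_h(\overline{u})-Y_h(U_h)$, prove the discrete duality identity $\int_0^T\langle\overline{u}-U_h,P_h[\zeta_h]\rangle_\Gamma dt=\sum_n\Delta t\|\zeta_h^n\|_{0,\Omega}^2\ge 0$ (your index bookkeeping with $\zeta_h^0=0$, $\psi_h^M=0$ and the staggered levels checks out) and drop it by sign, estimate $P_h[\overline{y}-Y_h(\overline{u})]$ on $\Gamma$ via trace plus an $\ell^2(\varepsilon)$-stability bound and Theorem 3.1, and then recover the state error afterwards from the control error through an $L^2$-stability bound for the discrete state equation with interface data. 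This buys a cleaner separation of "what is nonnegative" from "what is small", but at the cost of two auxiliary stability lemmas (discrete state solver with $L^2(\Gamma)$ source, discrete adjoint solver measured on $\Gamma$) that are not stated in the paper, though they do follow from the energy argument of Lemma 3.2 with the test function $z_h^{n-1}$ (or $\zeta_h^n$), Poincar\'e and the trace inequality, exactly as you indicate; your treatment of the adjoint error via $\overline{p}-P_h(\overline{y})$ plus an $L^2$-stability bound mirrors the paper's use of (\ref{s414}) and the interpolation estimates (\ref{s4666})--(\ref{s4667}). So the proposal is sound; it just pays with extra (standard) stability machinery for what the paper obtains for free by keeping the $-\tfrac12\|\overline{y}-Y_h\|^2$ term on the left-hand side.
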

\begin{proof}
It follows from the optimality conditions (\ref{s24}) and
(\ref{s36}) that
\begin{equation*}
\int_{0}^{T}\langle\alpha \overline{u}+\overline{p},U_{h}-\overline{u}\rangle_{\Gamma} dt\geq 0
\end{equation*}
and
\begin{equation*}
\sum\limits_{n=1}^{M}\int_{I_{n}}\langle\alpha U_{h}+P_{h}^{n-1},\overline{u}-U_{h}\rangle_{\Gamma}dt\geq 0.
\end{equation*}
Adding the resulting inequalities leads to
\begin{equation}\label{s410}
\begin{aligned}
\alpha &\sum\limits_{n=1}^{M}\int_{I_{n}}\| \overline{u}-U_{h}\|_{0,\Gamma} ^{2}dt= \sum\limits_{n=1}^{M}\int_{I_{n}} \langle\alpha \overline{u}-\alpha U_{h},\overline{u}-U_{h}\rangle_{\Gamma}dt\\
&=\sum\limits_{n=1}^{M}\int_{I_{n}} \langle\alpha \overline{u}+\overline{p},\overline{u}-U_{h}\rangle_{\Gamma}dt-\sum\limits_{n=1}^{M}\int_{I_{n}} \langle\alpha U_{h}+P_{h}^{n-1},\overline{u}-U_{h}\rangle_{\Gamma}dt\\
&\quad+\sum\limits_{n=1}^{M}\int_{I_{n}} \langle P_{h}^{n-1}-\overline{p},\overline{u}-U_{h}\rangle_{\Gamma}dt\\
&\leq\sum\limits_{n=1}^{M}\int_{I_{n}} \langle P_{h}^{n-1}-\overline{p},\overline{u}-U_{h}\rangle_{\Gamma}dt\\
&\leq\sum\limits_{n=1}^{M}\int_{I_{n}} \langle P_{h}^{n-1}-P_{h}^{n-1}(\overline{y}),\overline{u}-U_{h}\rangle_{\Gamma}dt+\sum\limits_{n=1}^{M}\int_{I_{n}} \langle P_{h}^{n-1}(\overline{y})-\overline{p},\overline{u}-U_{h}\rangle_{\Gamma}dt\\
&=J_{1}+J_{2},
\end{aligned}
\end{equation}
where $P_{h}(\overline{y})$ denotes the solution of (\ref{s42}) with $y=\overline{y}$. From (\ref{s32}), (\ref{s34}), (\ref{s35}) and (\ref{s42}) we have
\begin{equation}\label{s411}
\begin{aligned}
\Big(\frac{Y_{h}^{n}(\overline{u})-Y_{h}^{n}}{\Delta t},w_{h}\Big)-\Big(\frac{Y_{h}^{n-1}(\overline{u})-Y_{h}^{n-1}}{\Delta t},w_{h}\Big)+a(Y_{h}^{n}(\overline{u})-Y_{h}^{n},w_{h})&\\
=\frac{1}{\Delta t}\int_{I_{n}}\langle \overline{u}-U_{h},w_{h}\rangle_{\Gamma}dt&
\end{aligned}
\end{equation}
and
\begin{equation}\label{s412}
\begin{aligned}
\Big(\frac{P_{h}^{n-1}(\overline{y})-P_{h}^{n-1}}{\Delta t},w_{h}\Big)-\Big(\frac{P_{h}^{n}(\overline{y})-P_{h}^{n}}{\Delta t},w_{h}\Big)+a(P_{h}^{n-1}(\overline{y})-P_{h}^{n-1},w_{h})&\\
=\frac{1}{\Delta t}\int_{I_{n}}(\overline{y}-Y_{h}^{n},w_{h})dt&.
\end{aligned}
\end{equation}
Taking $w_{h}=P_{h}^{n-1}-P_{h}^{n-1}(\overline{y})$ in (\ref{s411}) and $w_{h}=Y_{h}^{n}-Y_{h}^{n}(\overline{u})$ in (\ref{s412}), we have
\begin{equation*}
\begin{aligned}
\int_{I_{n}} \langle P_{h}^{n-1}-P_{h}^{n-1}(\overline{y}),\overline{u}-U_{h}\rangle_{\Gamma}dt= \big(Y_{h}^{n-1}-Y_{h}^{n-1}(\overline{u}),P_{h}^{n-1}-P_{h}^{n-1}(\overline{y})\big)&\\
- \big(Y_{h}^{n}-Y_{h}^{n}(\overline{u}),P_{h}^{n}-P_{h}^{n}(\overline{y})\big)+\int_{I_{n}} \big(\overline{y}-Y_{h}^{n},Y_{h}^{n}-Y_{h}^{n}(\overline{u})\big)dt&.
\end{aligned}
\end{equation*}
Summation from $n = 1$ to $M$ leads to
\begin{equation*}
\begin{aligned}
J_{1}&=\sum\limits_{n=1}^{M}\int_{I_{n}} \big(\overline{y}-Y_{h}^{n},Y_{h}^{n}-Y_{h}^{n}(\overline{u})\big)dt\\
&\leq-\frac{1}{2}\sum\limits_{n=1}^{M}\int_{I_{n}} \|\overline{y}-Y_{h}^{n}\|_{0,\Omega}^{2}dt+\frac{1}{2}\sum\limits_{n=1}^{M}\int_{I_{n}}\|\overline{y}-Y_{h}^{n}(\overline{u})\|_{0,\Omega}^{2}dt.
\end{aligned}
\end{equation*}
Using Young's inequality gives
\begin{equation*}
\begin{aligned}
J_{2}&=\sum\limits_{n=1}^{M}\int_{I_{n}} \langle P_{h}^{n-1}(\overline{y})-\overline{p},\overline{u}-U_{h}\rangle_{\Gamma}dt\\
&\leq C\sum\limits_{n=1}^{M}\int_{I_{n}} \|P_{h}^{n-1}(\overline{y})-\overline{p}\|_{0,\Gamma}^{2}dt+\frac{\alpha}{2}\sum\limits_{n=1}^{M}\int_{I_{n}} \|\overline{u}-U_{h}\|^{2}_{0,\Gamma}dt.
\end{aligned}
\end{equation*}
Inserting the estimates for $J_{1}$ and $J_{2}$ into (\ref{s410}) yields
\begin{equation*}
\begin{aligned}
\alpha &\sum\limits_{n=1}^{M}\int_{I_{n}}\| \overline{u}-U_{h}\|_{0,\Gamma} ^{2}dt+\sum\limits_{n=1}^{M}\int_{I_{n}} \|\overline{y}-Y_{h}^{n}\|_{0,\Omega}^{2}dt\\
&\leq C\Big(\sum\limits_{n=1}^{M}\int_{I_{n}}\|\overline{y}-Y_{h}^{n}(\overline{u})\|_{0,\Omega}^{2}dt
+\sum\limits_{n=1}^{M}\int_{I_{n}} \|P_{h}^{n-1}(\overline{y})-\overline{p}\|_{0,\Gamma}^{2}dt\Big).
\end{aligned}
\end{equation*}
Utilizing Theorems 3.1 and 4.1,  we get
\begin{equation}\label{s413}
\begin{aligned}
\alpha &\sum\limits_{n=1}^{M}\int_{I_{n}}\| \overline{u}-U_{h}\|_{0,\Gamma} ^{2}dt+\sum\limits_{n=1}^{M}\int_{I_{n}} \|\overline{y}-Y_{h}^{n}\|_{0,\Omega}^{2}dt\leq C(\Delta t^{3/2}+h^{3}+\Delta th+\Delta t^{1/2}h^{2}).
\end{aligned}
\end{equation}
Setting $w_{h}= P_{h}^{n-1}(\overline{y})-P_{h}^{n-1}$ in (\ref{s412}), we have
\begin{equation*}
\begin{aligned}
&\frac{1}{2}\|P_{h}^{n-1}(\overline{y})-P_{h}^{n-1}\|_{0,\Omega}^{2}-\frac{1}{2}\|P_{h}^{n}(\overline{y})-P_{h}^{n}\|_{0,\Omega}^{2}+\Delta t\|P_{h}^{n-1}(\overline{y})-P_{h}^{n-1}\|_{\varepsilon, \Omega}^{2}\\
&\leq \frac{1}{2}\int_{I_{n}}\|\overline{y}-Y_{h}^{n}\|_{0,\Omega}^{2}dt+\frac{1}{2}\Delta t\|P_{h}^{n-1}(\overline{y})-P_{h}^{n-1}\|_{0,\Omega}^{2}.
\end{aligned}
\end{equation*}
Summing over $n$ from $m $ to $M$, we obtain
\begin{equation*}
\begin{aligned}
&\|P_{h}^{m-1}(\overline{y})-P_{h}^{m-1}\|_{0,\Omega}^{2}+\sum\limits_{n=m}^{M}2\Delta t\|P_{h}^{n-1}(\overline{y})-P_{h}^{n-1}\|_{\varepsilon, \Omega}^{2}\\
&\leq \sum\limits_{n=m}^{M}\int_{I_{n}}\|\overline{y}-Y_{h}^{n}\|_{0,\Omega}^{2}dt+\sum\limits_{n=m}^{M}\Delta t\|P_{h}^{n-1}(\overline{y})-P_{h}^{n-1}\|_{0,\Omega}^{2}.
\end{aligned}
\end{equation*}
Using discrete Gronwall's inequality, we calculate
\begin{equation}\label{s414}
\|P_{h}^{m-1}(\overline{y})-P_{h}^{m-1}\|_{0,\Omega}
\leq C\|\overline{y}-Y_{h}\|_{L^{2}(0,T;L^{2}(\Omega))}.
\end{equation}
Using (\ref{s4666}), (\ref{s4667}), (\ref{s413}), (\ref{s414})  and  Theorem 4.1, we conclude
\begin{equation*}
\begin{aligned}
\sum\limits_{n=1}^{M}\int_{I_{n}} \|\overline{p}-P_{h}^{n-1}\|_{0,\Omega}^{2}dt &\leq\sum\limits_{n=1}^{M}\int_{I_{n}} \|\overline{p}-P_{h}^{n-1}(\overline{y})\|_{0,\Omega}^{2}dt+\sum\limits_{n=1}^{M}\int_{I_{n}} \|P_{h}^{n-1}(\overline{y})-P_{h}^{n-1}\|_{0,\Omega}^{2}dt\\
&\leq \sum\limits_{n=1}^{M}\int_{I_{n}} \|\overline{p}-\overline{R}_{h}^{n}\overline{p}\|_{0,\Omega}^{2}dt+ \sum\limits_{n=1}^{M}\int_{I_{n}} \|\overline{R}_{h}^{n}\overline{p}-P_{h}^{n-1}(\overline{y})\|_{0,\Omega}^{2}dt\\
&\quad+C\|\overline{y}-Y_{h}\|_{L^{2}(0,T;L^{2}(\Omega))}^{2}\\
&\leq C(\Delta t^{3/2}+h^{3}+\Delta th+\Delta t^{1/2}h^{2}).
\end{aligned}
\end{equation*}
Thus, we complete the proof of the theorem.
\end{proof}
\begin{rem}
In this paper, for simplicity, we only consider the case of homogeneous boundary conditions. However, the above theoretical results can be directly applied to nonhomogeneous boundary conditions. We perform numerical experiments for nonhomogeneous boundary conditions to confirm the optimal convergence in Section 5.
\end{rem}
\section{\bf Numerical examples}
In this section we present numerical examples to support our theoretical findings. In all examples, we set the computation domain $\Omega$ as a square $(-1, 1)\times(-1, 1)$ and the regularity parameter $\alpha=1$ and use $N\times N$ uniform triangular meshes and $M$ uniform time grids. We use the fixed-point iteration algorithm to solve the optimal control problem. The algorithm is as follows:\\
\\
\begin{tabular}{cccc}%四个c代表该表一共四列，内容全部居中
\toprule%第一道横线
  \leftline{$\mathbf{Algorithm ~1}$ }\\
\midrule%第二道横线
  \leftline{1: Give an initial function  $U_{h,0}\in U_{ad}$;} \\
  \leftline{2: Solve the state equation (\ref{s34}) to obtain $Y_{h,0}^{n},~ n=1,\cdots,M$;}\\
  \leftline{3: Solve the adjoint state equation (\ref{s35}) to obtain $P_{h,0}^{n-1},~ n=1,\cdots,M$. Set $k=0$;}\\
   \leftline{4: $\mathbf{repeat}$;}\\
  \leftline{5: Set $U_{h,k}^{n}=P_{U_{ad}}\big(-\frac{1}{\alpha}P_{h,k-1}^{n-1}|_{\Gamma}\big)$ ;}\\
  \leftline{6: Solve the state equation (\ref{s34}) to obtain $Y_{h,k}^{n},~ n=1,\cdots,M$;}\\
  \leftline{7: Solve the adjoint state equation (\ref{s35}) to obtain $P_{h,k}^{n-1},~ n=1,\cdots,M$. Set $k=k+1$;}\\
  \leftline{8: $\mathbf{until}$ stopping criteria.}\\
\bottomrule%第三道横线
\end{tabular}
\\
\\
This algorithm is convergent if the regularity parameter $\alpha$ is large enough (see, e.g., \cite{Hinze2009}).
In the following numerical examples, we define the experimental order of convergence by
$$\text{Order}=\frac{\log E(h_{1}) -\log E(h_{2})}{\log h_{1}-\log h_{2}},$$
where $E(h)$ denotes the error on triangulation with mesh size $h$.
\begin{rem}
Although we assumed in the previous sections that $\Omega$ is a bounded domain with smooth boundary, numerical experiments demonstrate that the algorithm remains effective in non-smooth domains.
\end{rem}
\noindent\textbf{Example 1.}
In this example, we consider a circle interface
$\Gamma=\{(x_1,x_2): x_{1}^2+x_{2}^{2}-r_{0}^2=0\}$ with $r_{0}=0.5$ (see Fig. 3).
\begin{center}
\includegraphics[width=2in,height=2in]{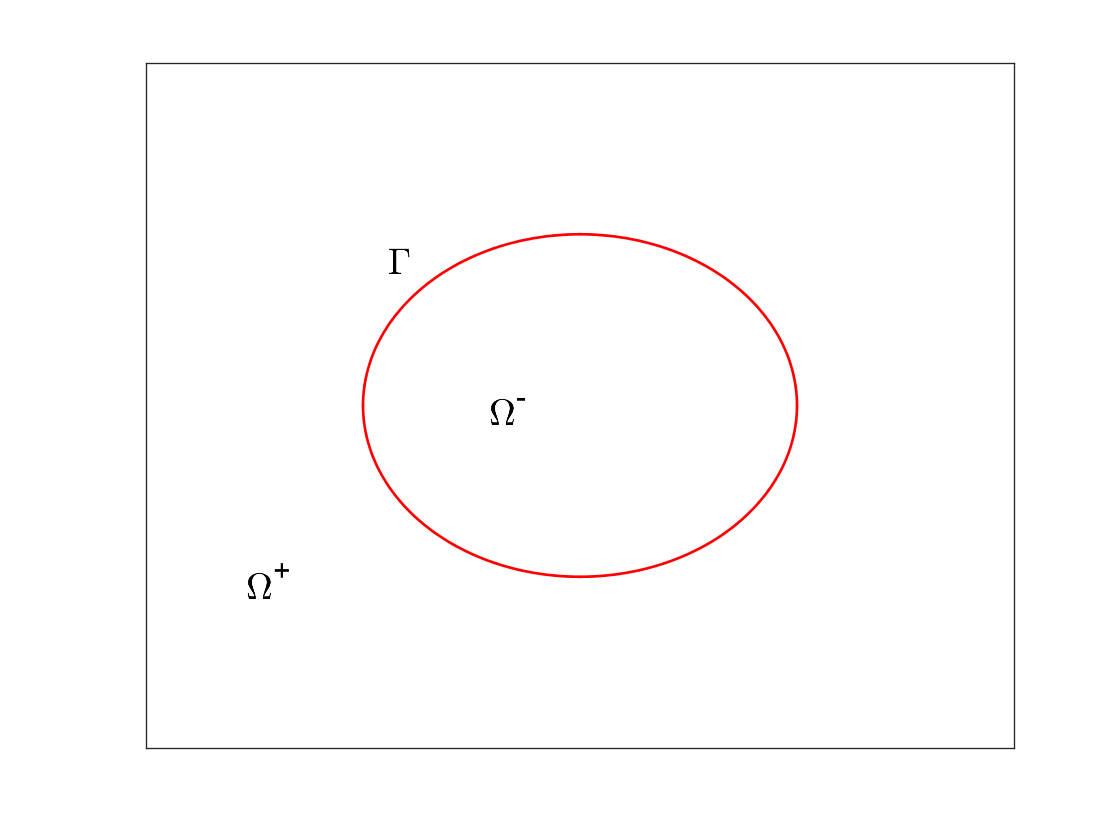}\\
{\mbox{\footnotesize Fig. 3. The circle interface for Example 1.}}
\end{center}
We choose $u_{a}= t(\sin(\pi x_{1})-\cos(\pi x_{2}))$ and $u_{b}=t(x_{1}^{2}+x_{2})$. The exact solution $\overline{y}$ is constructed with a nonhomogeneous boundary condition.
The optimal triple $(\overline{y},\overline{p},\overline{u})$ is given by
\begin{equation*}
\begin{aligned}
&\overline{y}(x_1,x_2,t)=\left\{\begin{array}{ll}
e^{t}\Big(\frac{\big(x_1^2+x_2^2\big)^{3/2}}{\beta^{-}}+\frac{1}{4\beta^{-}}\big(\frac{x_1^2+x_2^2}{r_{0}^{2}}-1\big)\Big), & \text { in  }~\Omega^-, \\
e^{t}\Big(\frac{\big(x_1^2+x_2^2\big)^{3/2}}{\beta^{+}}+\big(\frac{1}{\beta^{-}}-\frac{1}{\beta^{+}}\big)r_{0}^3\Big),  & \text { in  }~\Omega^+;
\end{array}\right.\\
&\overline{p}(x_1,x_2,t)=\left\{\begin{array}{ll}
(t-1)(x_1^2+x_2^2-r_{0}^2)(x_{1}^2-1)(x_{2}^2-1)/\beta^-, & \text { in }~\Omega^-, \\
(t-1)(x_1^2+x_2^2-r_{0}^2)(x_{1}^2-1)(x_{2}^2-1)/\beta^+, & \text { in  }~ \Omega^+;
\end{array}\right.\\
&\overline{u}(x_1,x_2,t)=\max\big\{t(\sin(\pi x_1)-\cos(\pi x_2)),\min\{t(x_1^2+x_2), 0\}\big\}.
\end{aligned}
\end{equation*}
We test the convergence performance for both small and large jumps, namely $\beta^-/\beta^+ = 1/10, 10/ 1, 1/1000, 1000/1$. The time step is chosen
as $\Delta t=O(h^2)$, where $\Delta t$ is the time step size and $h$ is the space mesh size. The errors and their convergence orders are shown in Table $1$-Table $4$.
We see from Table $1$ to Table $4$ that the convergence order for the state,
control, and adjoint state is second, which is better than our theoretical result. The exact solution and the computed solution
images of the state, adjoint state, and control with $N= 128$ and $ M=4096$ are shown in Figs. 4-6. From these figures it is observed that the approximate solution is almost identical to the exact solution. The error images of the state, adjoint state, and control with $N= 128$ and $ M=4096$ are shown in Fig. 7. We can find that the numerical errors are mainly accumulated on the interface, which is consistent with our prediction.

\begin{table}[H]
		\centering		
		\caption{The $L^{2}$ error and convergence order of the state, control, and adjoint state for Example 1 with $\beta^-=1$ and $\beta^+=10$.}
		\label{table11}
		\resizebox{1\textwidth}{!}{
		\begin{tabular}{*{7}{c}}
\bottomrule
\multirow{2}*{$1/h$} & \multicolumn{2}{c}{state}&
\multicolumn{2}{c}{control}&
\multicolumn{2}{c}{adjoint state}\\
\cmidrule(lr){2-3}\cmidrule(lr){4-5}\cmidrule(lr){6-7}
    &$\|\overline{y}-Y_{h}\|_{L^{2}(0,T;L^{2}(\Omega))}$& Order & $\|\overline{u}-U_{h}\|_{L^{2}(0,T;L^{2}(\Gamma))}$ & Order & $\|\overline{p}-P_{h}\|_{L^{2}(0,T;L^{2}(\Omega))}$ & Order \\
  \midrule
  %4 & $     7.6472 \text{E-02}	$ & $\setminus$&$     1.8659\text{E-03}	    $& $\setminus$ & $    1.2488      \text{E-02}$& $\setminus$ \\
%  8 & $      2.0664    \text{E-02}	  $ & $1.8878$ & $  6.8295\text{E-04} $ & $  1.4500$& $     	3.2776 \text{E-03}    $& $   1.9299     $ \\
8 & $      2.0664    \text{E-02}	  $ & $\setminus$ & $  6.8295\text{E-04} $ & $  \setminus$& $     	3.2776 \text{E-03}    $& $  \setminus     $ \\
  16 & $   4.9825 \text{E-03}	 $ & $  2.0522$ & $     2.1094\text{E-04}$ & $   1.6950
  $& $    7.9271\text{E-04}	 $& $      2.0478    $ \\
  32 & $  1.2533 \text{E-03}	 $ & $ 1.9911$  & $  5.4057\text{E-05}$ & $   1.9643
  $& $     2.0154\text{E-04}	   $& $     1.9758       $ \\
  64 & $  2.9749  \text{E-04}	 $ & $ 2.0748$  & $   1.3587\text{E-05}$ & $   1.9923
  $& $     4.8446\text{E-05}	   $& $     2.0566       $ \\
  \bottomrule
\end{tabular}
		}	
\end{table}
\begin{table}[H]
		\centering		
		\caption{ The $L^{2}$ error and convergence order of the state, control, and adjoint state for Example 1 with $\beta^-=10$ and $\beta^+=1$.}
		\label{table11}
		\resizebox{1\textwidth}{!}{
		\begin{tabular}{*{7}{c}}
\bottomrule
\multirow{2}*{$1/h$} & \multicolumn{2}{c}{state}&
\multicolumn{2}{c}{control}&
\multicolumn{2}{c}{adjoint state}\\
\cmidrule(lr){2-3}\cmidrule(lr){4-5}\cmidrule(lr){6-7}
    &$\|\overline{y}-Y_{h}\|_{L^{2}(0,T;L^{2}(\Omega))}$& Order & $\|\overline{u}-U_{h}\|_{L^{2}(0,T;L^{2}(\Gamma))}$ & Order & $\|\overline{p}-P_{h}\|_{L^{2}(0,T;L^{2}(\Omega))}$ & Order \\
  \midrule
  %4 & $     1.8780\text{E-01}	$ & $\setminus$&$    1.5741\text{E-02}	    $& $\setminus$ & $      4.5604 \text{E-02}$& $\setminus$ \\
%  8 & $     4.5819  \text{E-02}	  $ & $  2.0352  $ & $   3.8888\text{E-03} $ & $ 2.0171 $& $     	1.1681\text{E-02}    $& $   1.9650 $ \\
8 & $     4.5819  \text{E-02}	  $ & $  \setminus $ & $   3.8888\text{E-03} $ & $ \setminus $& $     	1.1681\text{E-02}    $& $   \setminus $ \\
  16 & $   1.1486  \text{E-02}	 $ & $    1.9961 $ & $  9.9305\text{E-04}$ & $ 1.9694
    $& $   2.9565\text{E-03}	 $& $     1.9821    $ \\
  32 & $  2.8399  \text{E-03}	 $ & $   2.0160 $  & $2.4745\text{E-04}$ & $ 2.0047 $& $    7.3640 \text{E-04}	   $& $      2.0053    $ \\
  64 & $ 7.1584\text{E-04}$ & $  1.9881 $  & $6.3766\text{E-05}$ & $1.9563  $& $ 1.8544\text{E-04} $& $ 1.9895 $ \\
  \bottomrule
\end{tabular}
		}	
\end{table}
\begin{table}[H]
		\centering		
		\caption{ The $L^{2}$ error and convergence order of the state, control, and adjoint state for Example 1 with $\beta^-=1$ and $\beta^+=1000$.}
		\label{table11}
		\resizebox{1\textwidth}{!}{
		\begin{tabular}{*{7}{c}}
\bottomrule
\multirow{2}*{$1/h$} & \multicolumn{2}{c}{state}&
\multicolumn{2}{c}{control}&
\multicolumn{2}{c}{adjoint state}\\
\cmidrule(lr){2-3}\cmidrule(lr){4-5}\cmidrule(lr){6-7}
    &$\|\overline{y}-Y_{h}\|_{L^{2}(0,T;L^{2}(\Omega))}$& Order & $\|\overline{u}-U_{h}\|_{L^{2}(0,T;L^{2}(\Gamma))}$ & Order & $\|\overline{p}-P_{h}\|_{L^{2}(0,T;L^{2}(\Omega))}$ & Order \\
  \midrule
 % 4 & $     2.9653 \text{E-01}	$ & $\setminus$&$    5.7017\text{E-04}	    $& $\setminus$ & $    5.4186       \text{E-02}$& $\setminus$ \\
%  8 & $     1.3263 \text{E-01}	  $ & $ 1.1608  $ & $  4.2117\text{E-04} $ & $0.4370 $& $     	 2.0023  \text{E-02}    $& $  1.4363     $ \\
8 & $     1.3263 \text{E-01}	  $ & $ \setminus $ & $  4.2117\text{E-04} $ & $\setminus $& $     	 2.0023  \text{E-02}    $& $  \setminus    $ \\
  16 & $   3.9555 \text{E-02}	 $ & $  1.7454  $ & $   1.8196  \text{E-04}$ & $   1.2108
    $& $   5.3393 \text{E-03}	 $& $     1.9069    $ \\
  32 & $  8.6116\text{E-03}	 $ & $  2.1995$  & $ 5.9664\text{E-05}$ & $   1.6087$& $    1.1303 \text{E-03}	   $& $   2.2399     $ \\
  64 & $   1.5449\text{E-03}	 $ & $   2.4787  $  & $ 1.6461\text{E-05}$ & $    1.8579$& $   2.0294\text{E-04}	   $& $     2.4776    $ \\
  \bottomrule
\end{tabular}
		}	
\end{table}
\begin{table}[H]
		\centering		
		\caption{ The $L^{2}$ error and convergence order of the state, control, and adjoint state for Example 1 with $\beta^-=1000$ and $\beta^+=1$.}
		\label{table11}
		\resizebox{1\textwidth}{!}{
		\begin{tabular}{*{7}{c}}
\bottomrule
\multirow{2}*{$1/h$} & \multicolumn{2}{c}{state}&
\multicolumn{2}{c}{control}&
\multicolumn{2}{c}{adjoint state}\\
\cmidrule(lr){2-3}\cmidrule(lr){4-5}\cmidrule(lr){6-7}
    &$\|\overline{y}-Y_{h}\|_{L^{2}(0,T;L^{2}(\Omega))}$& Order & $\|\overline{u}-U_{h}\|_{L^{2}(0,T;L^{2}(\Gamma))}$ & Order & $\|\overline{p}-P_{h}\|_{L^{2}(0,T;L^{2}(\Omega))}$ & Order \\
 \midrule
 % 4 & $ 1.8701 \text{E-01}	$ & $\setminus$&$ 1.4716\text{E-02} $& $\setminus$ & $ 4.5104    \text{E-02}$& $\setminus$ \\
%  8 & $    4.5610  \text{E-02}	  $ & $ 2.0357$ & $  3.6907\text{E-03} $ & $ 1.9954 $& $     	1.1554\text{E-02} $& $    1.9648      $ \\
8 & $    4.5610  \text{E-02}	  $ & $ \setminus$ & $  3.6907\text{E-03} $ & $\setminus $& $     	1.1554\text{E-02} $& $   \setminus   $ \\
  16 & $    1.1452   \text{E-02}	 $ & $   1.9938$ & $   9.4677\text{E-04}$ & $1.9628$& $   2.9353\text{E-03}	 $& $      1.9768  $ \\
  32 & $  2.8317\text{E-03}	 $ & $ 2.0159 $  & $2.3853\text{E-04}$ & $  1.9888$& $    7.3243\text{E-04}	   $& $    2.0027$ \\
  64 & $  7.1498   \text{E-04}	 $ & $ 1.9857$  & $6.0223\text{E-05}$ & $  1.9858$& $    1.8445\text{E-04}	   $& $    1.9895 $ \\
  \bottomrule
\end{tabular}
		}	
\end{table}
\begin{center}
\includegraphics[width=5in,height=2.5in]{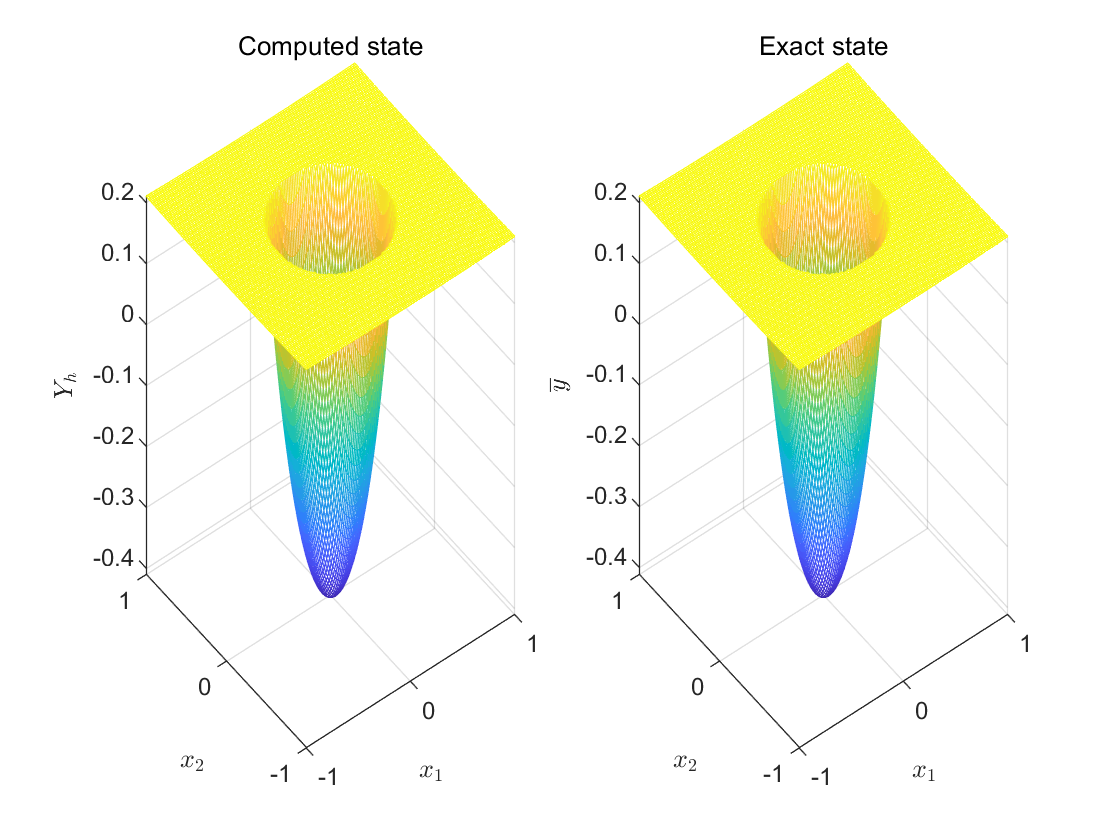}\\
{\mbox{\footnotesize Fig. 4. The computed state and the exact state with  $\beta^{-}/\beta^{+} = 1/1000$.}}
\end{center}
\begin{center}
\includegraphics[width=5in,height=2.5in]{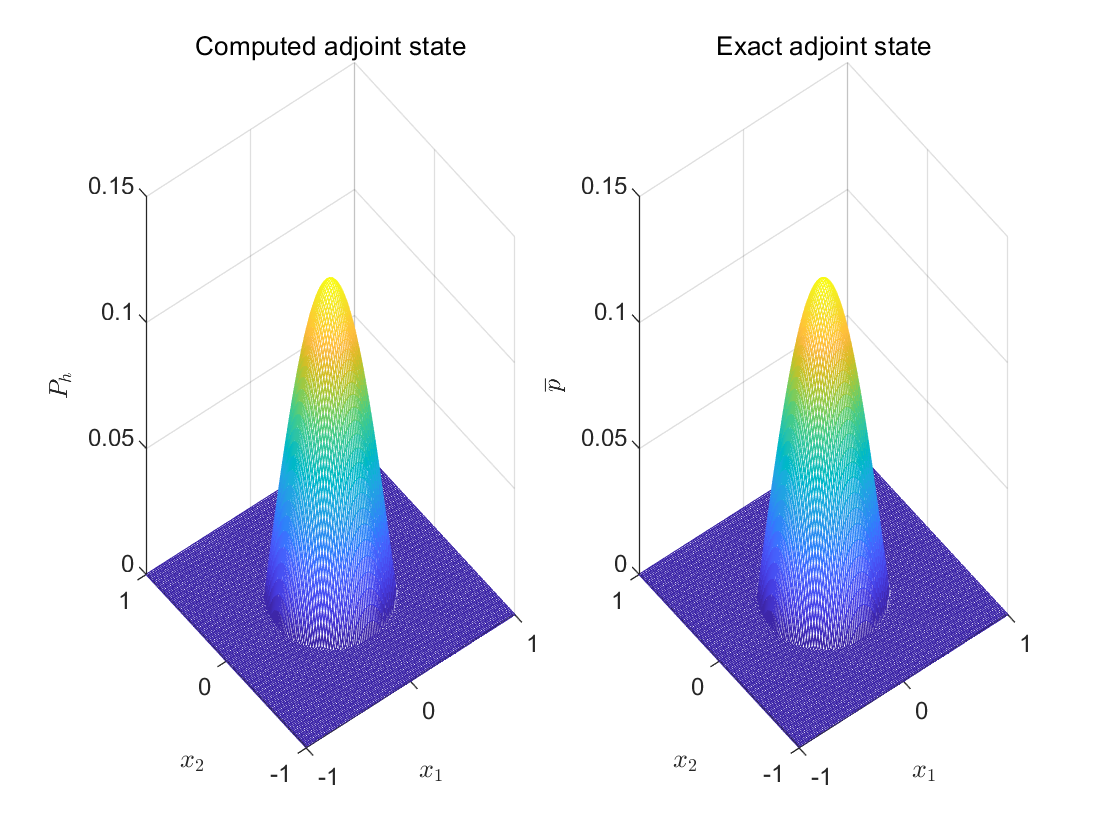}\\
{\mbox{\footnotesize Fig. 5. The computed adjoint state and the exact adjoint state with $\beta^{-}/\beta^{+} = 1/1000$.}}
\end{center}
\begin{center}
\includegraphics[width=5in,height=2.5in]{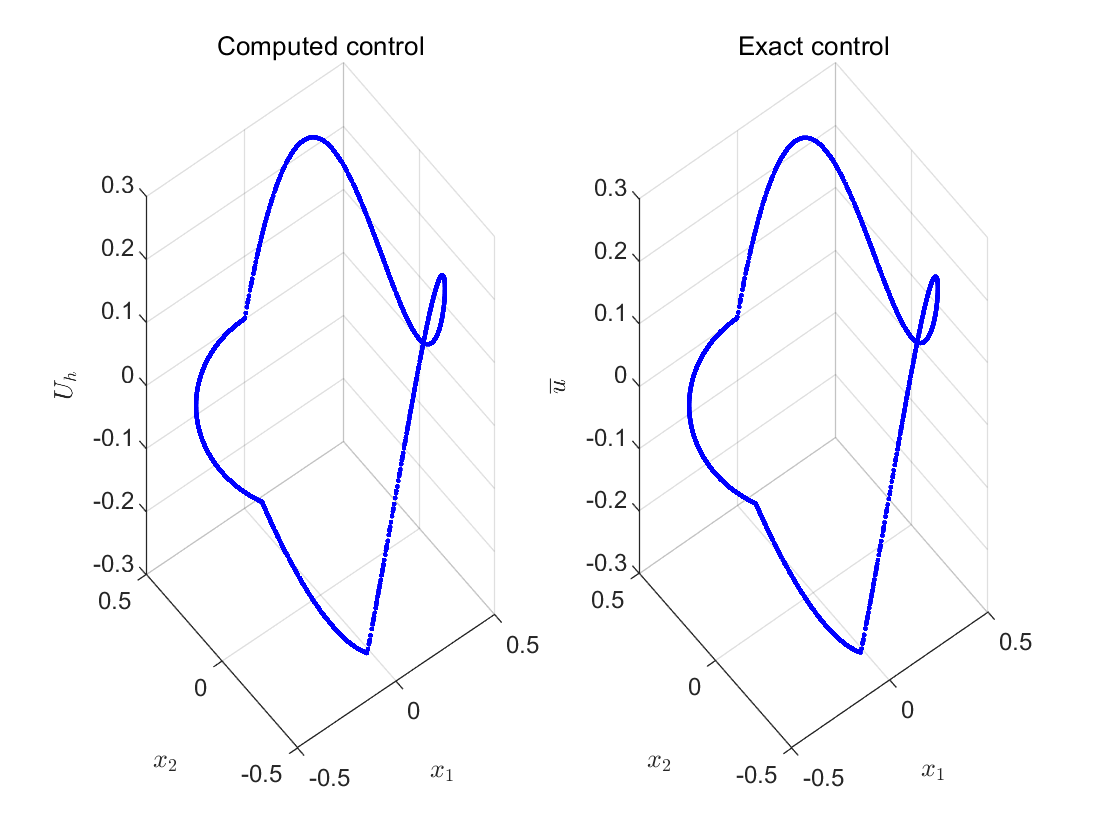}\\
{\mbox{\footnotesize Fig. 6. The computed control and the exact control with $\beta^{-}/\beta^{+} = 1/1000$.}}
\end{center}
\begin{figure}[H]
	\centering
	\begin{subfigure}{0.325\linewidth}
		\centering
		\includegraphics [width=2in,height=2in]{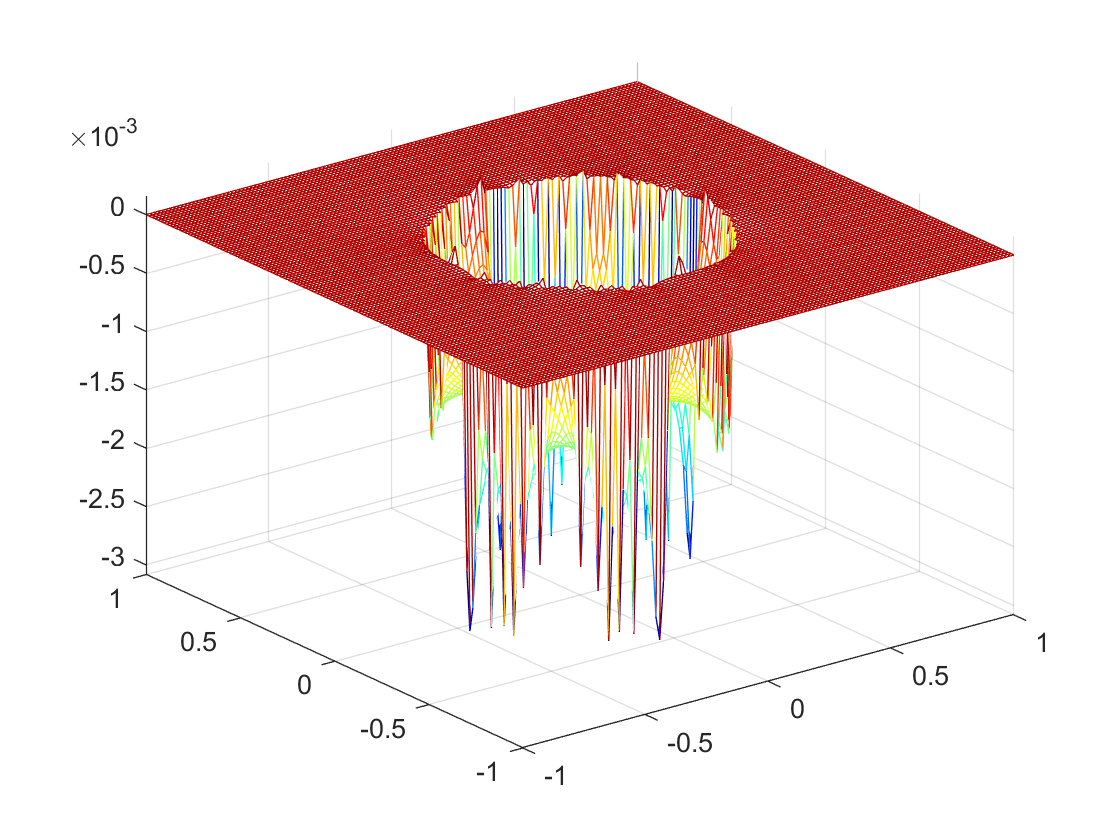}
		\subcaption*{\footnotesize (a) state error}
		\label{chutian3}%文中引用该图片代号
	\end{subfigure}
	\centering
	\begin{subfigure}{0.325\linewidth}
		\centering
		\includegraphics [width=2in,height=2in]{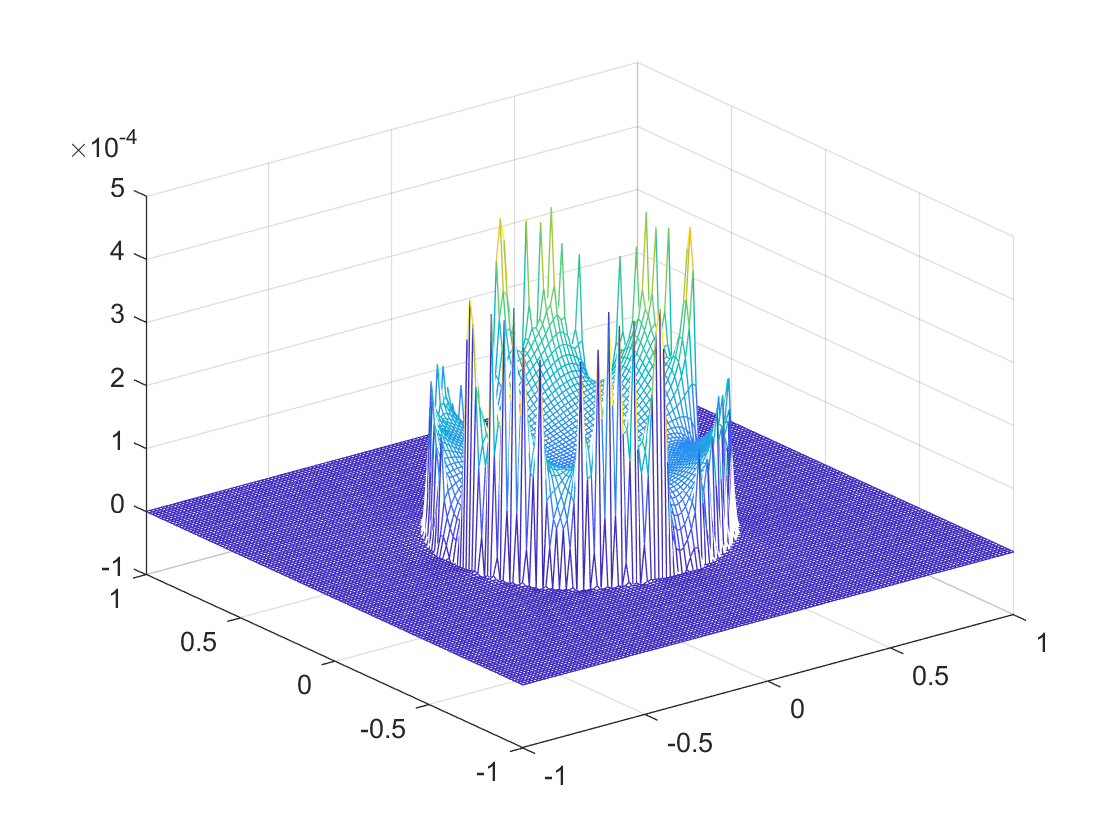}
		\subcaption*{\footnotesize (b) adjoint state error}
		\label{chutian3}%文中引用该图片代号
	\end{subfigure}
	\centering
	\begin{subfigure}{0.325\linewidth}
		\centering
		\includegraphics [width=2in,height=2in]{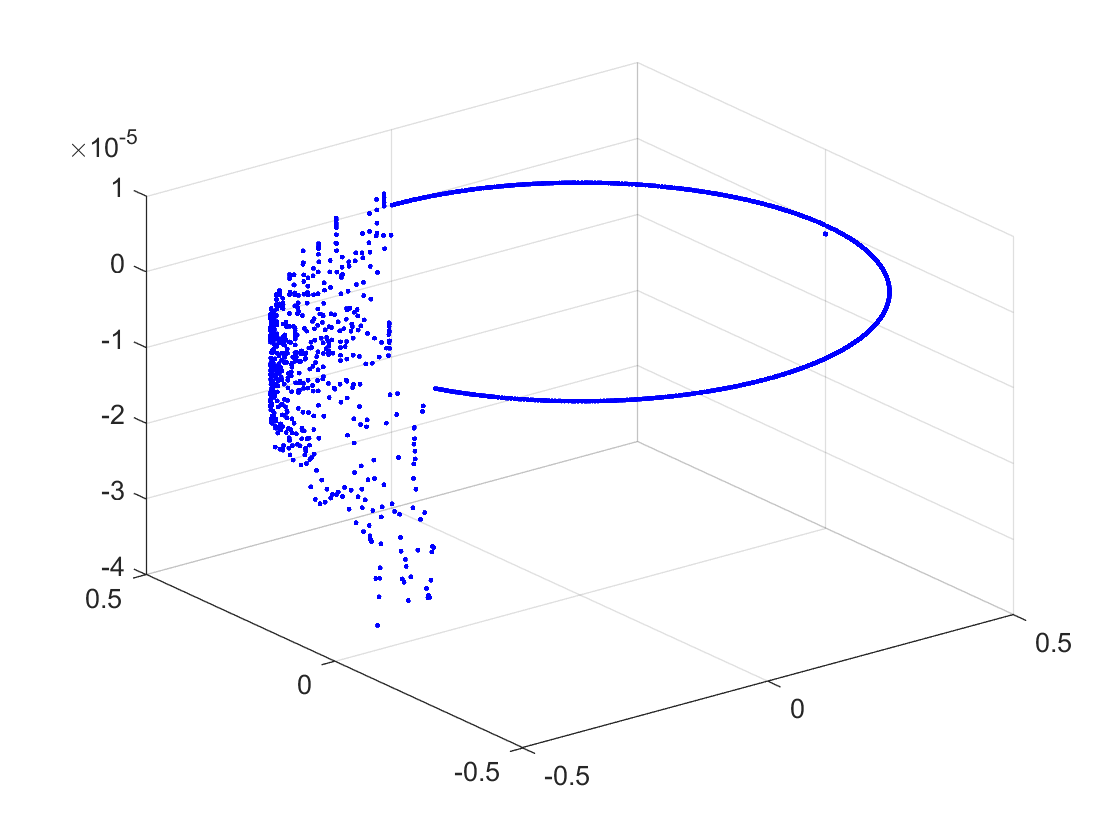}
		\subcaption*{\footnotesize (c) control error}
		\label{chutian3}%文中引用该图片代号
	\end{subfigure}
	\caption*{\footnotesize Fig. 7. The error of the state, adjoint state, and control with $\beta^{-}/\beta^{+} = 1/1000$ for Example 1.}
	\label{da_chutian}
\end{figure}
\noindent\textbf{Example 2.} In this example, we consider a cubic curve \cite{Guo2018},  i.e., $\Gamma =\{(x_1,x_2): x_2-3x_1(x_1-0.3)(x_1-0.8)-0.38=0\}$ (see Fig. 8). We consider both constrained and unconstrained cases.
\begin{center}
\includegraphics[width=2in,height=2in]{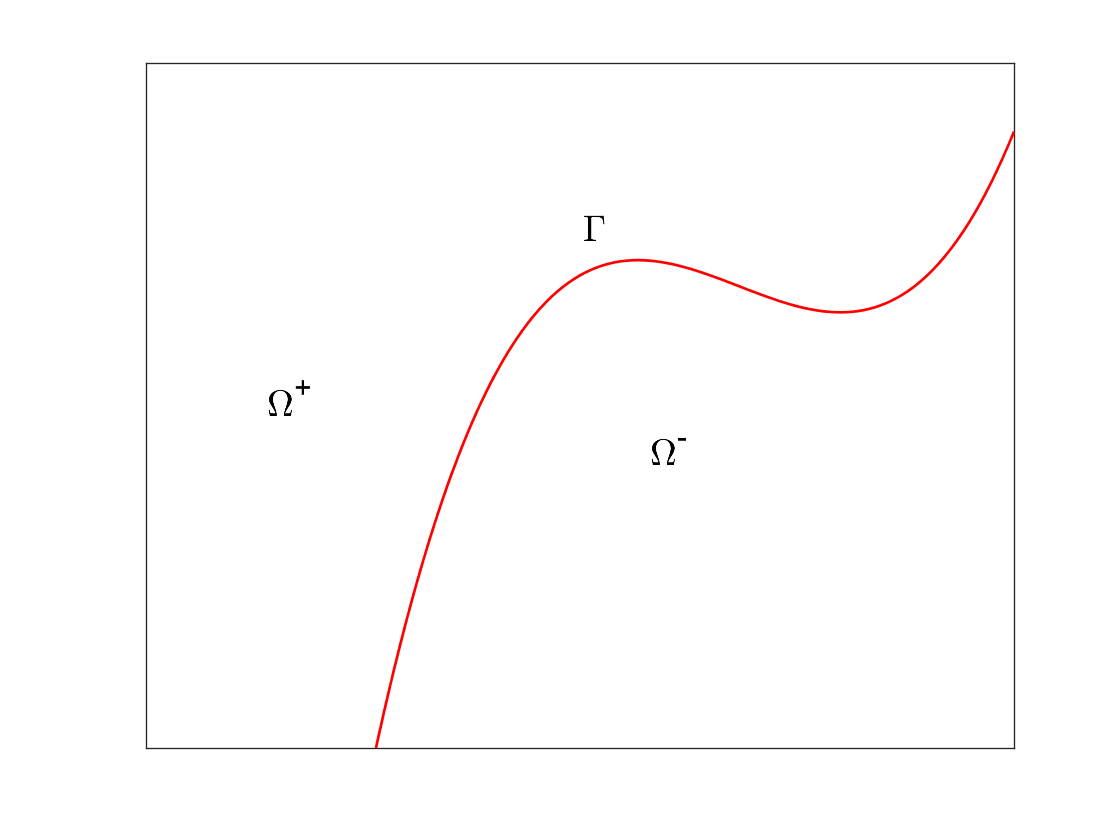}\\
{\mbox{\footnotesize Fig. 8. The cubic curve interface for Example 2.}}
\end{center}
\noindent\textbf{Case 1} In this case, we consider problems without control constraints. The optimal triple $(\overline{y},\overline{p},\overline{u})$ is given by
\begin{equation*}
\begin{aligned}
&\overline{y}(x_1,x_2,t)=\left\{\begin{array}{ll}
\cos(t-1)\big(-3x_{1}^3+x_{2}^2-0.38\big), & \text { in  }~\Omega^-, \\
\cos(t-1)\big(-x_{2}+x_{2}^2-3.3x_{1}^2+0.72x_{1}\big),  & \text { in  }~ \Omega^+;
\end{array}\right.\\
&\overline{p}(x_1,x_2,t)=\left\{\begin{array}{ll}
\sin(t-1)\big(x_{2}-3x_{1}^3+3.3x_{1}^2-0.72x_{1}-0.38\big)(x_{1}^2-1)(x_{2}^2-1)/\beta^-, &\text { in }~\Omega^-, \\
\sin(t-1)\big(x_{2}-3x_{1}^3+3.3x_{1}^2-0.72x_{1}-0.38\big)(x_{1}^2-1)(x_{2}^2-1)/\beta^+, &\text { in  }~\Omega^+;
\end{array}\right.\\
&\overline{u}(x_1,x_2,t)=0.
\end{aligned}
\end{equation*}
\noindent\textbf{Case 2} In this case, we consider problems with control constraints.
 We set $u_{a}=t(x_{2}-3x_{1}^3+0.3x_{1}^2)$ and $u_{b}=1$. The control variable $\overline{u}$ as follows:
$$\overline{u}(x_1,x_2,t)=\max\{t(x_{2}-3x_{1}^3+0.3x_{1}^2),0\}.$$
Other data are set as in Case 1.

Consider the cases in which the discontinuous diffusion coefficient is $\beta^{-}/\beta^{+}=1/10$. At first, we set $\Delta t=O(h^2)$. The $L^{2}$ norm error and convergence order of the control, state, and adjoint state without and with control constraint are shown in Tables 5-6. From Tables 5-6, we observe that the convergence order is second for the control, state, and adjoint state. Then we set $\Delta t=O(h)$ and present the errors of the control, state, and adjoint state in Tables 7 and 8. We find that the convergence order is first for the control, state, and adjoint state. The exact solution and the computed solution images of the state, adjoint state, and control with $N= 128$ and $ M=4096$ are shown in Figs. 9-11 and Figs. 13-15 for the unconstrained and constrained cases, respectively. The error images of the state, adjoint state, and control with $N= 128$ and $ M=4096$ are shown in Fig. 12 and Fig. 16 for the unconstrained and constrained cases, respectively. From these results, the numerical approach seems to be applicable to the case of $ \Omega\cap \Gamma\neq 0$.
\begin{table}[H]
		\centering		
		\caption{ The $L^{2}$ error and convergence order of the state, control, and adjoint state for Example 2 with $\beta^-=1$ and $\beta^+=10$ (without control constraints).}
		\label{table11}
		\resizebox{1\textwidth}{!}{
		\begin{tabular}{*{7}{c}}
\bottomrule
\multirow{2}*{$1/h$} & \multicolumn{2}{c}{state}&
\multicolumn{2}{c}{control}&
\multicolumn{2}{c}{adjoint state}\\
\cmidrule(lr){2-3}\cmidrule(lr){4-5}\cmidrule(lr){6-7}
    &$\|\overline{y}-Y_{h}\|_{L^{2}(0,T;L^{2}(\Omega))}$& Order & $\|\overline{u}-U_{h}\|_{L^{2}(0,T;L^{2}(\Gamma))}$ & Order & $\|\overline{p}-P_{h}\|_{L^{2}(0,T;L^{2}(\Omega))}$ & Order \\
  \midrule
 % 4 & $     9.3927 \text{E-02}	$ & $\setminus$&$    7.7142\text{E-03}	    $& $\setminus$ & $   4.8474     \text{E-02}$& $\setminus$ \\
%  8 & $     2.4971 \text{E-02}	  $ & $  1.9113 $ & $  2.3242\text{E-03} $ & $ 1.7308   $& $     1.2054\text{E-02}    $& $   2.0076     $ \\
 8 & $     2.4971 \text{E-02}	  $ & $  \setminus $ & $  2.3242\text{E-03} $ & $\setminus   $& $     1.2054\text{E-02}    $& $  \setminus    $ \\
  16 & $   6.8800 \text{E-03}	 $ & $   1.8598 $ & $   5.7087 \text{E-04}$ & $ 2.0255   $& $   2.9320  \text{E-03}	 $& $      2.0396   $ \\
  32 & $  1.7474 \text{E-03}	 $ & $  1.9772$  & $1.5415\text{E-04}$ & $  1.8889   $& $    7.2602\text{E-04}	   $& $     2.0138        $ \\
  64 & $   4.7973  \text{E-04}	 $ & $ 1.8660$  & $3.9753\text{E-05}$ & $  1.9552    $& $     1.7898 \text{E-04}	   $& $      2.0202      $ \\
  \bottomrule
\end{tabular}
		}	
\end{table}
\begin{table}[H]
		\centering		
		\caption{ The $L^{2}$ error and convergence order of the state, control, and adjoint state for Example 2 with $\beta^-=1$ and $\beta^+=10$ (with control constraints).}
		\label{table11}
		\resizebox{1\textwidth}{!}{
		\begin{tabular}{*{7}{c}}
\bottomrule
\multirow{2}*{$1/h$} & \multicolumn{2}{c}{state}&
\multicolumn{2}{c}{control}&
\multicolumn{2}{c}{adjoint state}\\
\cmidrule(lr){2-3}\cmidrule(lr){4-5}\cmidrule(lr){6-7}
    &$\|\overline{y}-Y_{h}\|_{L^{2}(0,T;L^{2}(\Omega))}$& Order & $\|\overline{u}-U_{h}\|_{L^{2}(0,T;L^{2}(\Gamma))}$ & Order & $\|\overline{p}-P_{h}\|_{L^{2}(0,T;L^{2}(\Omega))}$ & Order \\
   \midrule
  %4 & $      9.3970\text{E-02}	$ & $\setminus$&$    5.8472\text{E-03}	    $& $\setminus$ & $   4.8477     \text{E-02}$& $\setminus$ \\
%  8 & $     2.4977   \text{E-02}	  $ & $  1.9116 $ & $ 1.9253\text{E-03} $ & $ 1.6026$& $     	1.2055 \text{E-02}    $& $   2.0077     $ \\
8 & $     2.4977   \text{E-02}	  $ & $  \setminus$ & $ 1.9253\text{E-03} $ & $ \setminus$& $     	1.2055 \text{E-02}    $& $  \setminus     $ \\
  16 & $  6.8795\text{E-03}	 $ & $   1.8602  $ & $   4.5507\text{E-04}$ & $  2.0810 $& $   2.9319\text{E-03}	 $& $       2.0397    $ \\
  32 & $  1.7469 \text{E-03}	 $ & $ 1.9775$  & $  1.1987\text{E-04}$ & $ 1.9247$& $    7.2600\text{E-04}	   $& $      2.0138      $ \\
  64 & $  4.7957\text{E-04}	 $ & $ 1.8643$  & $ 3.0253\text{E-05}$ & $ 1.9863$& $    1.7897 \text{E-04}	   $& $      2.0203      $ \\
  \bottomrule
\end{tabular}
		}	
\end{table}
\begin{table}[H]
		\centering		
		\caption{ The $L^{2}$ error and convergence order of the state, control, and adjoint state for Example 2 with $\beta^-=1$ and $\beta^+=10$ (without control constraints).}
		\label{table11}
		\resizebox{1\textwidth}{!}{
		\begin{tabular}{*{7}{c}}
\bottomrule
\multirow{2}*{$1/h$} & \multicolumn{2}{c}{state}&
\multicolumn{2}{c}{control}&
\multicolumn{2}{c}{adjoint state}\\
\cmidrule(lr){2-3}\cmidrule(lr){4-5}\cmidrule(lr){6-7}
    &$\|\overline{y}-Y_{h}\|_{L^{2}(0,T;L^{2}(\Omega))}$& Order & $\|\overline{u}-U_{h}\|_{L^{2}(0,T;L^{2}(\Gamma))}$ & Order & $\|\overline{p}-P_{h}\|_{L^{2}(0,T;L^{2}(\Omega))}$ & Order \\
  \midrule
  %4 & $     1.6088\text{E-01}	$ & $\setminus$&$  5.9425\text{E-03}	    $& $\setminus$ & $ 8.3825  \text{E-02}$& $\setminus$ \\
%  8 & $  7.1119\text{E-02}	  $ & $  1.1777$ & $2.9211\text{E-03}$ & $  1.0246$& $     	 3.4054 \text{E-02}    $& $   1.2996  $ \\
 8 & $  7.1119\text{E-02}	  $ & $ \setminus$ & $2.9211\text{E-03}$ & $ \setminus$& $     	 3.4054 \text{E-02}    $& $   \setminus  $ \\
  16 & $3.3425 \text{E-02}	 $ & $   1.0893  $ & $ 1.5314\text{E-03}	$ & $ 0.9317 $& $   1.5249 \text{E-02}	 $& $   1.1592   $ \\
  32 & $   1.6087\text{E-02}	 $ & $ 1.0551$  & $ 8.0592\text{E-04}	 $ & $ 0.9262$& $       7.2486\text{E-03}	   $& $     1.0729  $ \\
  64 & $   7.9432\text{E-03}	 $ & $ 1.0181$  & $4.1058\text{E-04}	 $ & $ 0.9730$& $       3.5382\text{E-03}	   $& $     1.0347 $ \\
  \bottomrule
\end{tabular}
		}	
\end{table}
\begin{table}[H]
		\centering		
		\caption{ The $L^{2}$ error and convergence order of the state, control, and adjoint state for Example 2 with $\beta^-=1$ and $\beta^+=10$ (with control constraints).}
		\label{table11}
		\resizebox{1\textwidth}{!}{
		\begin{tabular}{*{7}{c}}
\bottomrule
\multirow{2}*{$1/h$} & \multicolumn{2}{c}{state}&
\multicolumn{2}{c}{control}&
\multicolumn{2}{c}{adjoint state}\\
\cmidrule(lr){2-3}\cmidrule(lr){4-5}\cmidrule(lr){6-7}
    &$\|\overline{y}-Y_{h}\|_{L^{2}(0,T;L^{2}(\Omega))}$& Order & $\|\overline{u}-U_{h}\|_{L^{2}(0,T;L^{2}(\Gamma))}$ & Order & $\|\overline{p}-P_{h}\|_{L^{2}(0,T;L^{2}(\Omega))}$ & Order \\
  \midrule
  %4 & $     1.6089\text{E-01}	$ & $\setminus$&$   5.3340\text{E-03}	    $& $\setminus$ & $   8.3826\text{E-02}$& $\setminus$ \\
  %8 & $     7.1096 \text{E-02}	  $ & $   1.1782$ & $  2.5960\text{E-03} $ & $ 1.0389  $& $     	3.4053 \text{E-02}    $& $  1.2996   $ \\
  8 & $     7.1096 \text{E-02}	  $ & $   \setminus$ & $  2.5960\text{E-03} $ & $ \setminus $& $     	3.4053 \text{E-02}    $& $  \setminus  $ \\
  16 & $   3.3405\text{E-02}	 $ & $  1.0897  $ & $  1.2000\text{E-03}$ & $ 1.1133
    $& $    1.5248\text{E-02}	 $& $     1.1592   $ \\
  32 & $  1.6075  \text{E-02}	 $ & $ 1.0552  $  & $  5.9522\text{E-04}$ & $ 1.0115
    $& $  7.2481\text{E-03}	   $& $      1.0729        $ \\
  64 & $  7.9372   \text{E-03}	 $ & $  1.0181   $  & $ 2.9690\text{E-04}$ & $  1.0035
    $& $   3.5379\text{E-03}	   $& $   1.0347      $ \\
  \bottomrule
\end{tabular}
		}	
\end{table}
\begin{center}
\includegraphics[width=5in,height=2.5in]{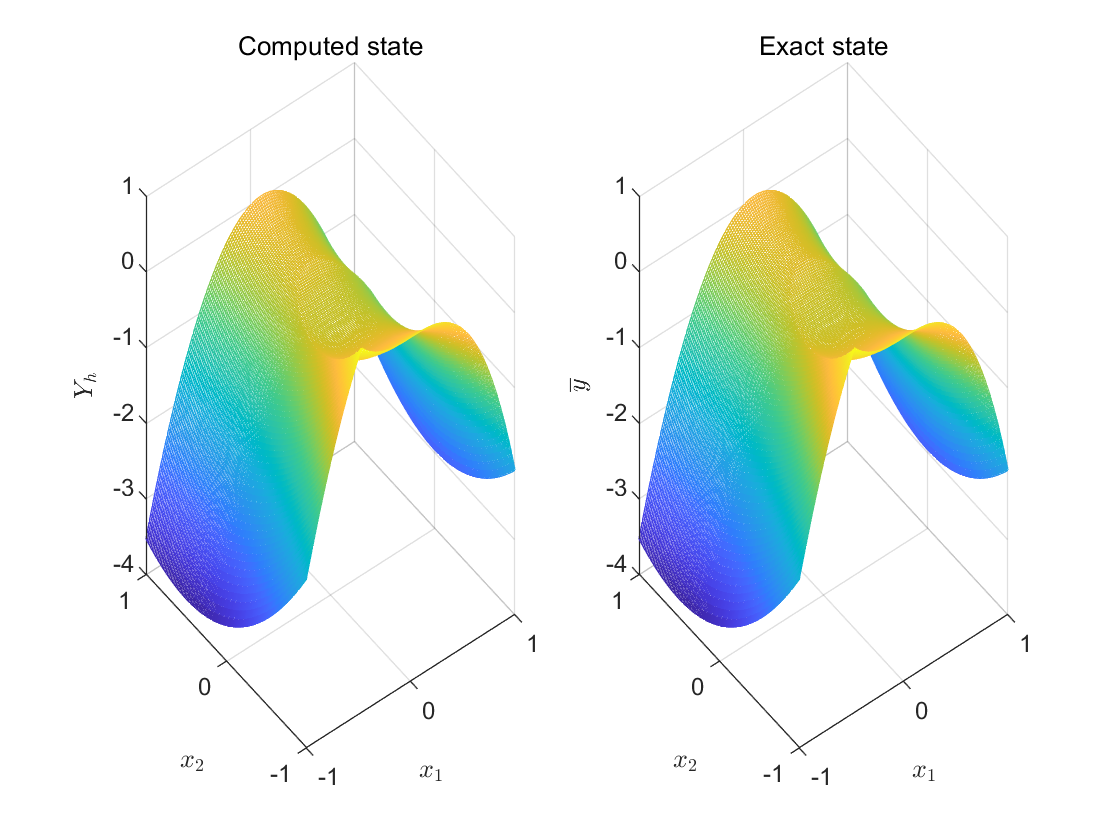}\\
{\mbox{\footnotesize Fig. 9. The computed state and the exact state (without control constraints).}}
\end{center}
\begin{center}
\includegraphics[width=5in,height=2.5in]{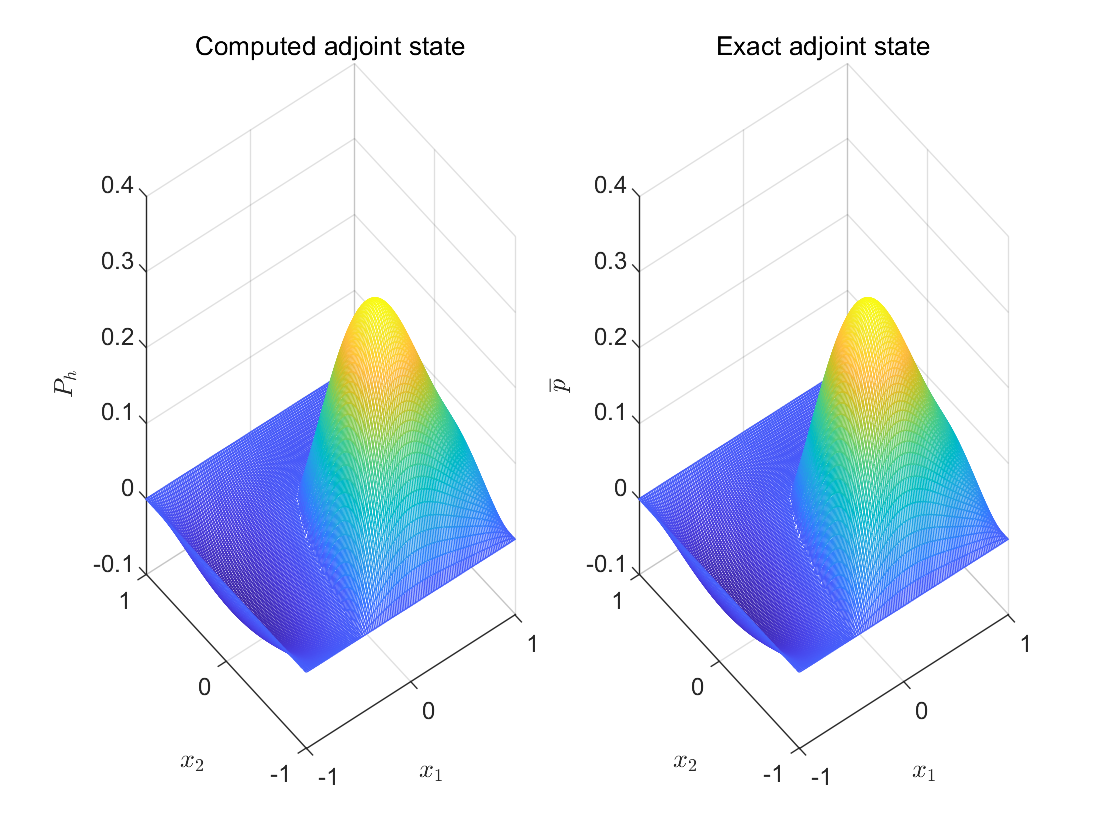}\\
{\mbox{\footnotesize Fig. 10. The computed adjoint state and the exact adjoint state (without control constraints).}}
\end{center}
\begin{center}
\includegraphics[width=5in,height=2.5in]{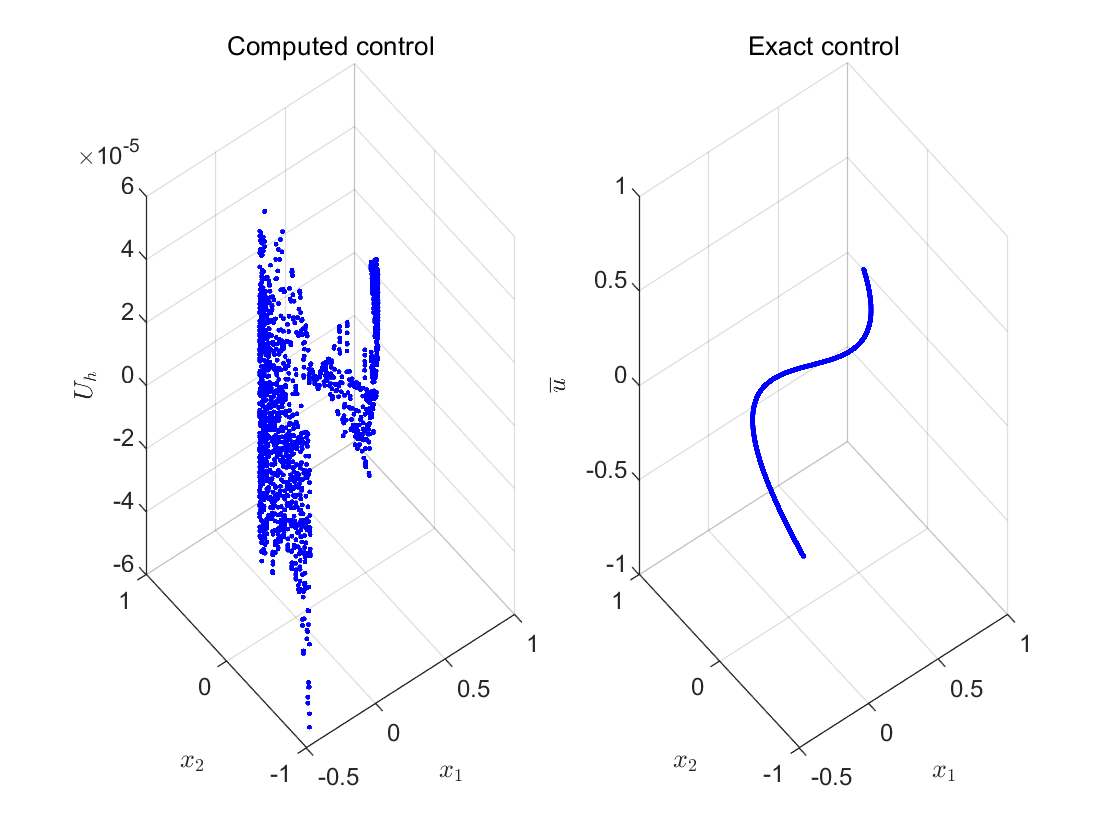}\\
{\mbox{\footnotesize Fig. 11. The computed control and the exact control (without control constraints).}}
\end{center}
\begin{figure}[H]
	\centering
	\begin{subfigure}{0.325\linewidth}
		\centering
		\includegraphics [width=2in,height=2in]{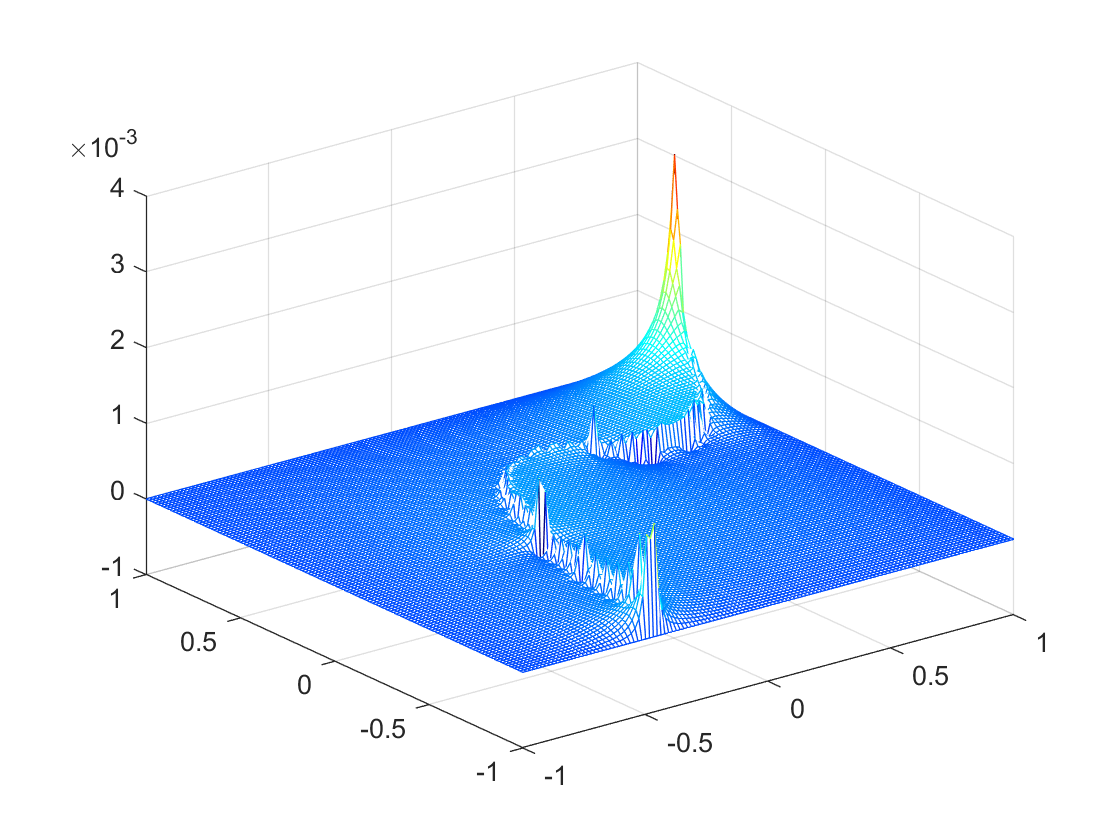}
		\subcaption*{\footnotesize (a) state error}
		\label{chutian3}%文中引用该图片代号
	\end{subfigure}
	\centering
	\begin{subfigure}{0.325\linewidth}
		\centering
		\includegraphics [width=2in,height=2in]{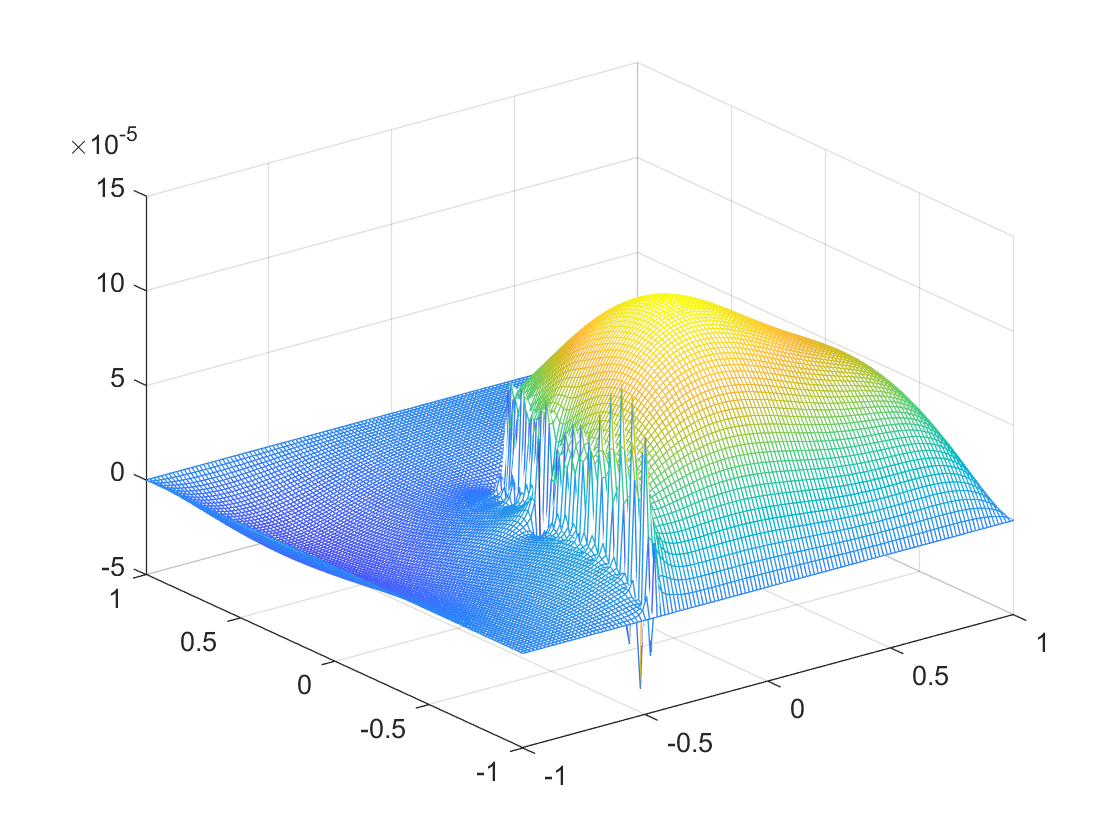}
		\subcaption*{\footnotesize (b) adjoint state error}
		\label{chutian3}%文中引用该图片代号
	\end{subfigure}
	\centering
	\begin{subfigure}{0.325\linewidth}
		\centering
		\includegraphics [width=2in,height=2in]{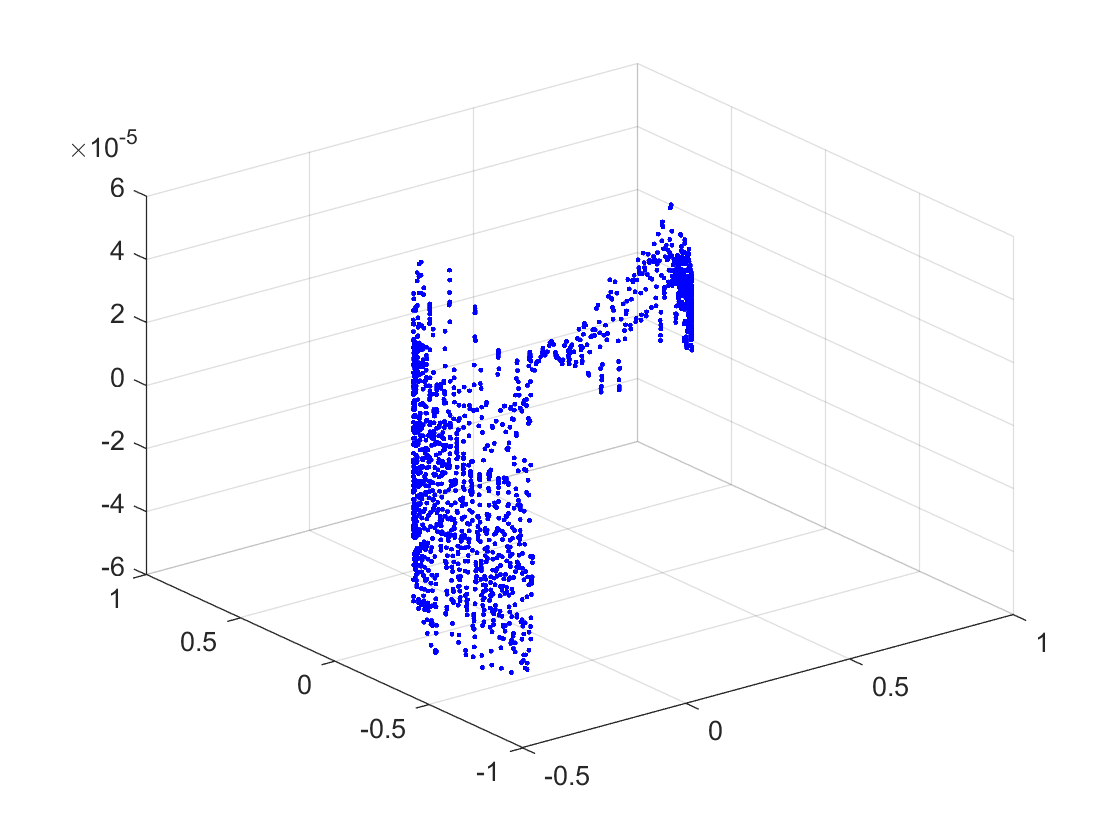}
		\subcaption*{\footnotesize (c) control error}
		\label{chutian3}%文中引用该图片代号
	\end{subfigure}
	\caption*{\footnotesize Fig. 12. The error of the state, adjoint state, and control (without control constraints) for Example 2.}
	\label{da_chutian}
\end{figure}
\begin{center}
\includegraphics[width=5in,height=2.5in]{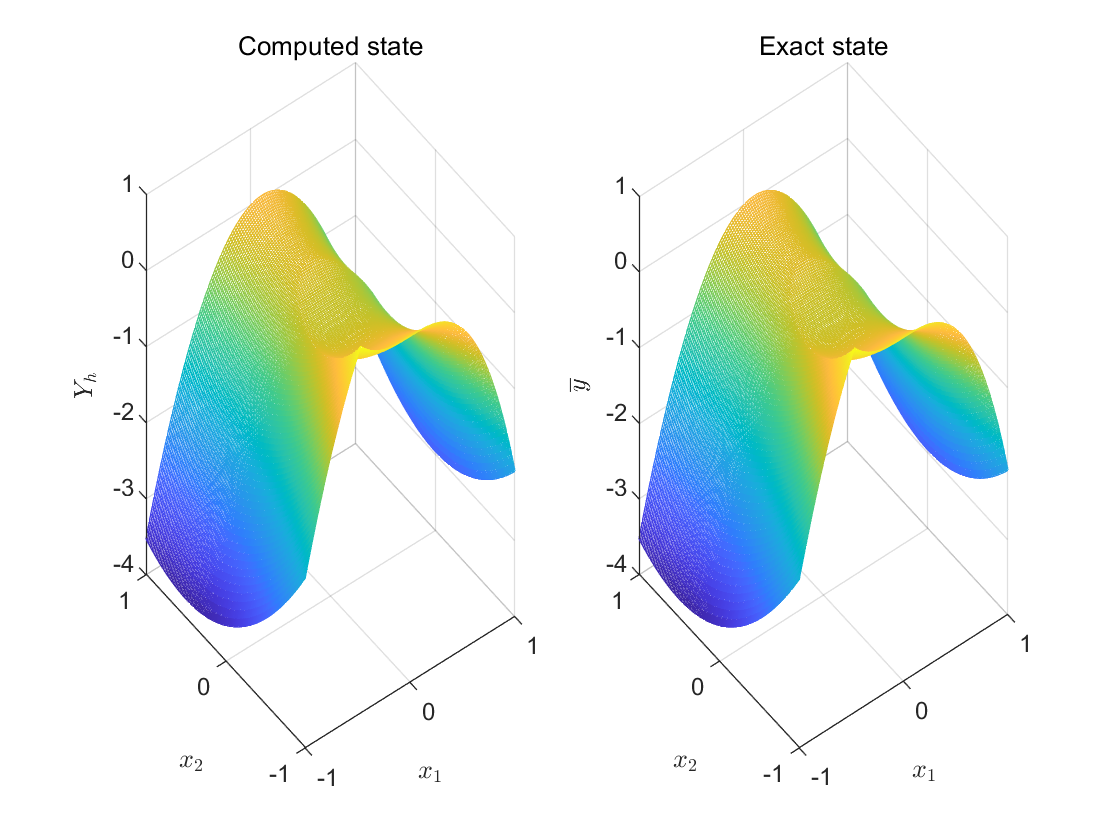}\\
{\mbox{\footnotesize Fig. 13. The computed state and the exact state (with control constraints).}}
\end{center}
\begin{center}
\includegraphics[width=5in,height=2.5in]{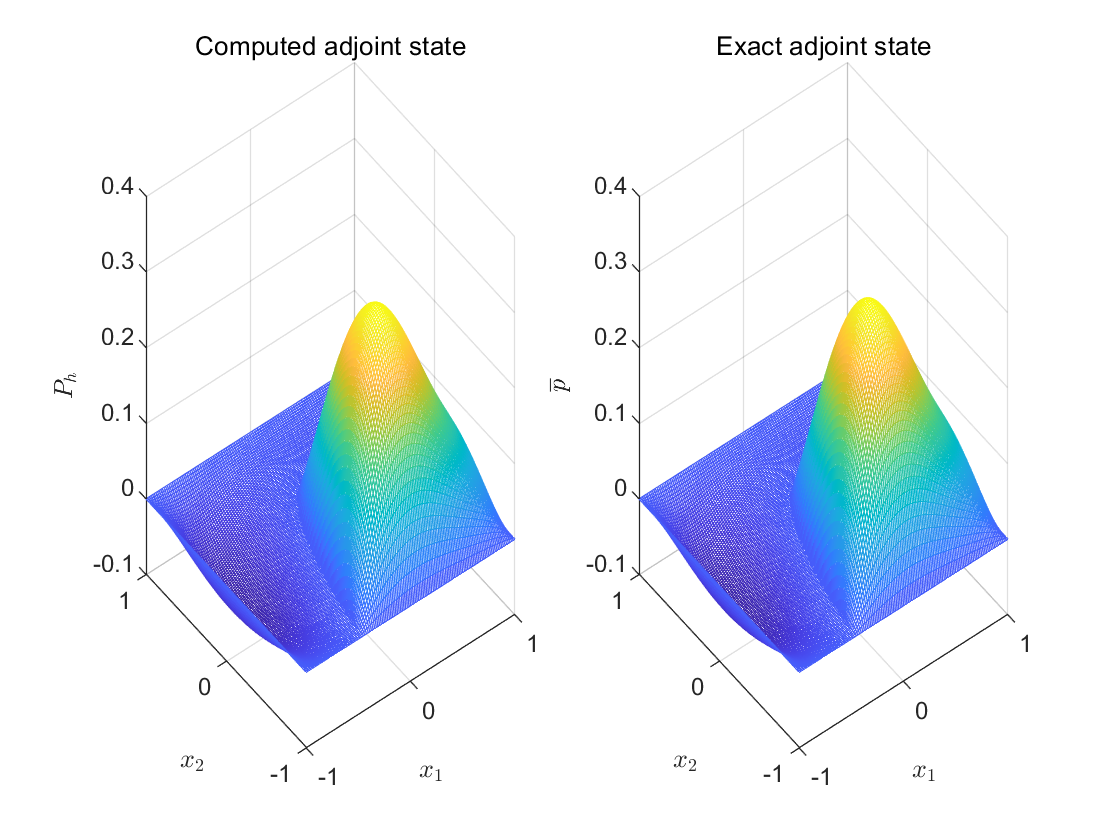}\\
{\mbox{\footnotesize Fig. 14. The computed adjoint state and the exact adjoint state (with control constraints).}}
\end{center}
\begin{center}
\includegraphics[width=5in,height=2.5in]{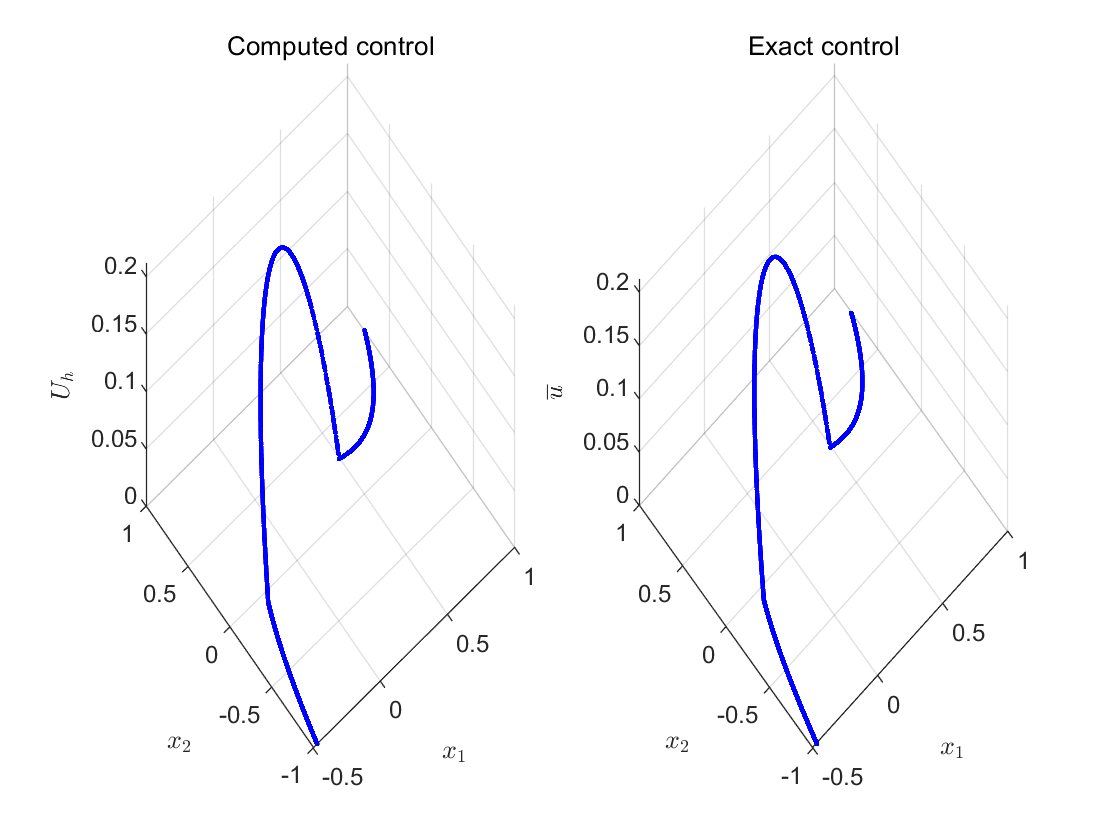}\\
{\mbox{\footnotesize Fig. 15. The computed control and the exact control (with control constraints).}}
\end{center}
\begin{figure}[H]
	\centering
	\begin{subfigure}{0.325\linewidth}
		\centering
		\includegraphics [width=2in,height=2in]{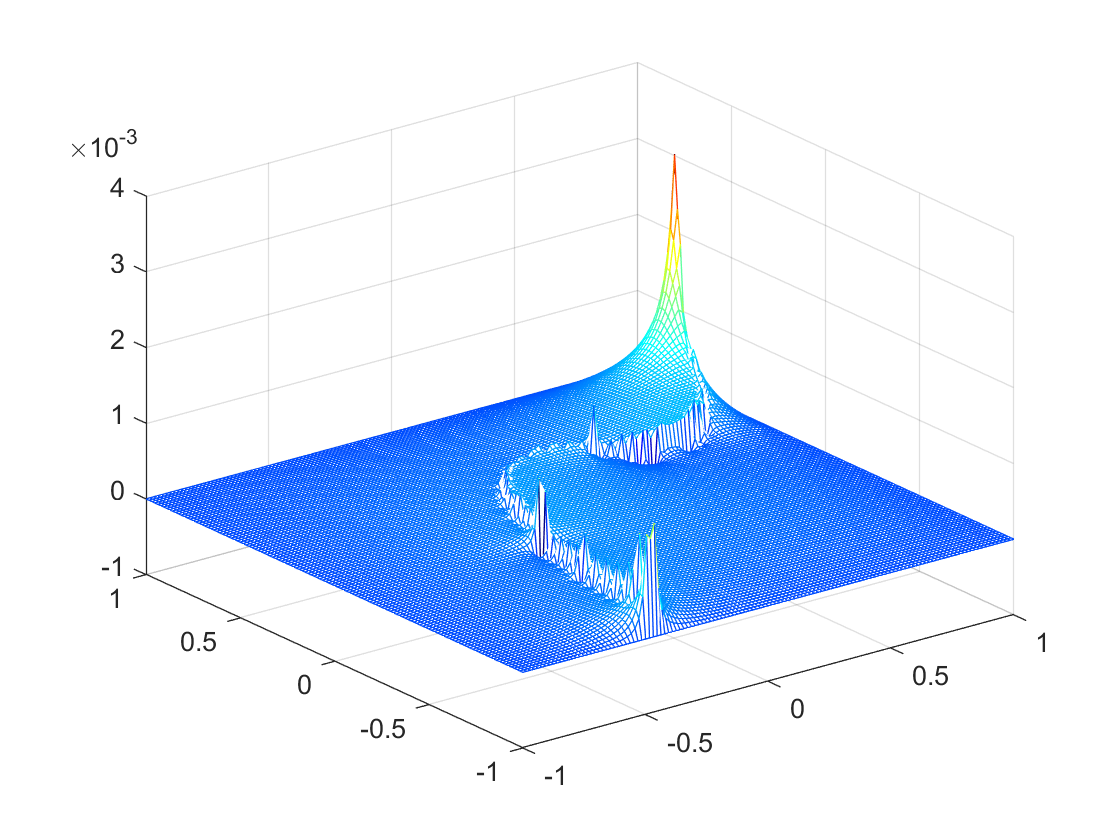}
		\subcaption*{\footnotesize (a) state error}
		\label{chutian3}%文中引用该图片代号
	\end{subfigure}
	\centering
	\begin{subfigure}{0.325\linewidth}
		\centering
		\includegraphics [width=2in,height=2in]{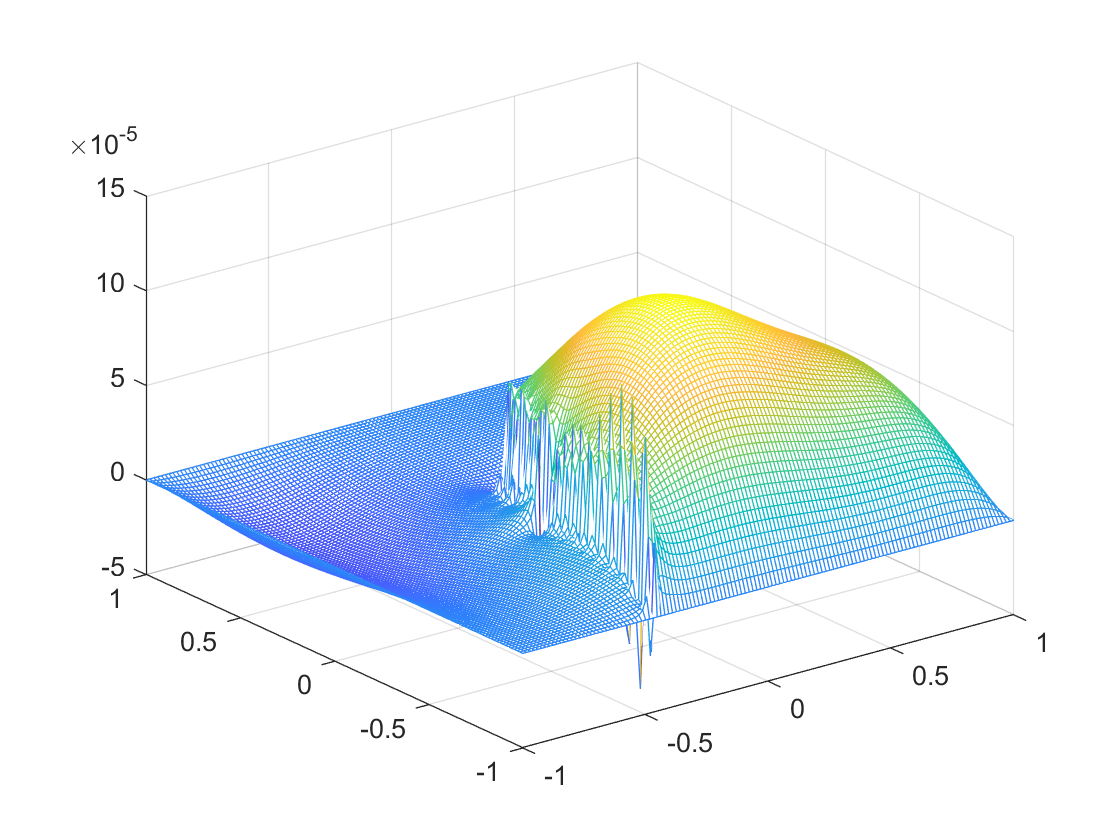}
		\subcaption*{\footnotesize (b) adjoint state error}
		\label{chutian3}%文中引用该图片代号
	\end{subfigure}
	\centering
	\begin{subfigure}{0.325\linewidth}
		\centering
		\includegraphics [width=2in,height=2in]{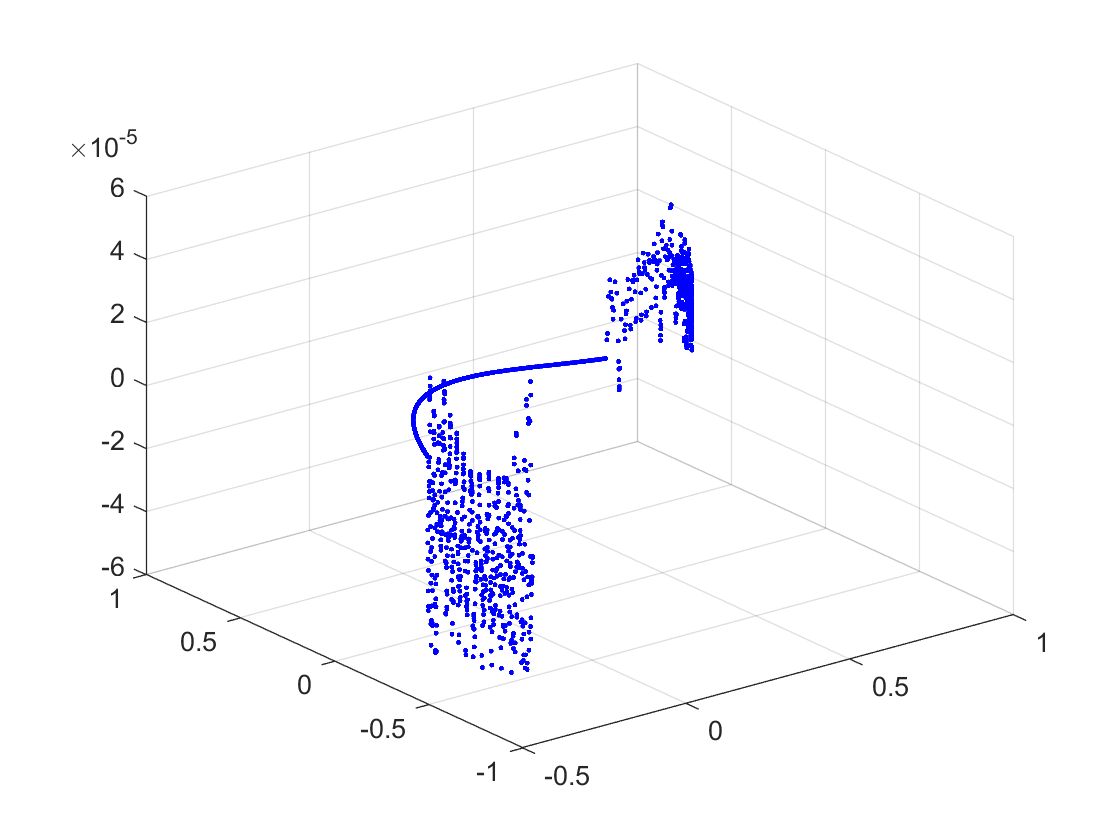}
		\subcaption*{\footnotesize (c) control error}
		\label{chutian3}%文中引用该图片代号
	\end{subfigure}
	\caption*{\footnotesize Fig. 16. The error of the state, adjoint state, and control (with control constraints) for Example 2.}
	\label{da_chutian}
\end{figure}
\noindent\textbf{Example 3.} In this example, we consider a more complicated interface: a flower-like shape\cite{Jo2021} (see Fig. 17). The level set function is $ \Gamma=\{(r,\theta): r^4(1+0.4\sin(6\theta))-0.3=0\}$.
\begin{center}
\includegraphics[width=2in,height=2in]{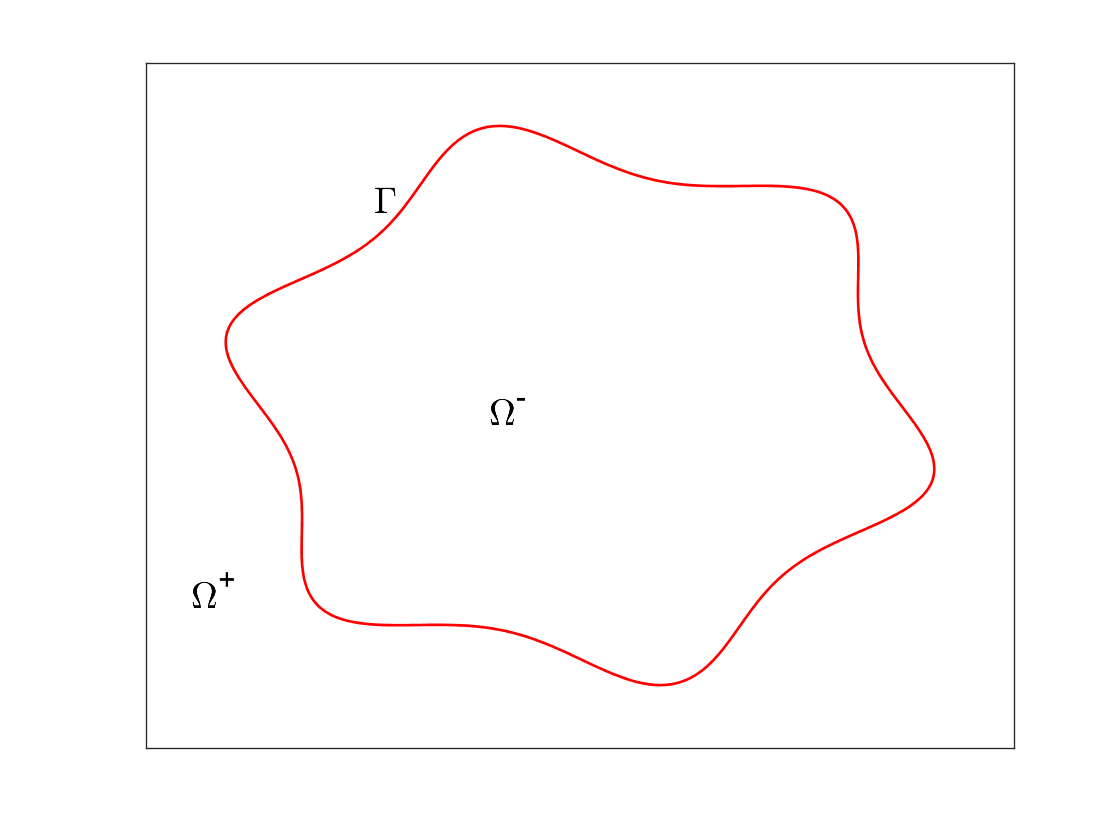}\\
{\mbox{\footnotesize Fig. 17. The flower-like interface for Example 3.}}
\end{center}
The data is chosen as:
\begin{equation*}
\begin{aligned}
&y_{d}=\left\{\begin{array}{ll}
10 & \text { if  }\,\, (x_{1},x_{2})\in \Omega^-, \\
1 & \text { if  }\,\, (x_{1},x_{2}) \in \Omega^+,
\end{array}\right.\\
&f=1 \quad\text{for} ~(x_{1},x_{2}) \in \Omega, \quad g=0 \quad\text{for} ~(x_{1},x_{2}) \in \Gamma, \quad y_{0}=0 \quad\text{for} ~(x_{1},x_{2}) \in \Omega.
\end{aligned}
\end{equation*}
Due to the complex geometry of the interface, it is difficult to give an exact solution. Thus, we use the numerical solutions on the spatial mesh with $N=128$ and temporal mesh with $M=4096$ as a reference solution to show the convergence order. The time steps are taken as $k=O(h^{2})$. The results are shown in Table 9. Fig. 18 shows the images of the numerical solutions for the state, control, and adjoint state with $N=128$ and $M=4096$. From these results we can conclude that our method is also effective for the case of complex interfaces without exact solutions.
\begin{table}[H]
		\centering		
		\caption{ The $L^{2}$ error and convergence order of the state, control, and adjoint state for Example 3 with $\beta^-=1$ and $\beta^+=10$.}
		\label{table11}
		%\resizebox{1\textwidth}{!}{
		\begin{tabular}{*{7}{c}}
\bottomrule
\multirow{2}*{$1/h$} & \multicolumn{2}{c}{state}&
\multicolumn{2}{c}{control}&
\multicolumn{2}{c}{adjoint state}\\
\cmidrule(lr){2-3}\cmidrule(lr){4-5}\cmidrule(lr){6-7}
    &$\|\overline{y}-Y_{h}\|_{L^{2}(I;L^{2}(\Omega))}$& Order & $\|\overline{u}-U_{h}\|_{L^{2}(I;L^{2}(\Gamma))}$ & Order & $\|\overline{p}-P_{h}\|_{L^{2}(I;L^{2}(\Omega))}$ & Order \\
  \midrule
  4 & $ 1.6829\text{E-02}	$ & $\setminus$&$6.1680\text{E-02}$& $\setminus$ & $ 1.6310 \text{E-02}$& $\setminus$ \\
  8 & $  4.5085\text{E-03}	  $ & $  1.9002 $ & $2.0254\text{E-02}$ & $ 1.6066$& $     	 4.2703 \text{E-02}    $& $   1.9333   $ \\
  16 & $1.0444\text{E-03}	 $ & $  2.1100$ & $ 6.0583\text{E-03}	$ & $ 1.7413$& $   1.0195\text{E-02}	 $& $      2.0665 $ \\
  32 & $ 2.0607\text{E-04}	 $ & $ 2.3414$  & $1.3531\text{E-03}	 $ & $ 2.1626$& $       2.0382 \text{E-03}	   $& $     2.3225    $ \\
  \bottomrule
\end{tabular}
		%}	
\end{table}
\begin{figure}[H]
	\centering
	\begin{subfigure}{0.325\linewidth}
		\centering
		\includegraphics [width=2in,height=2in]{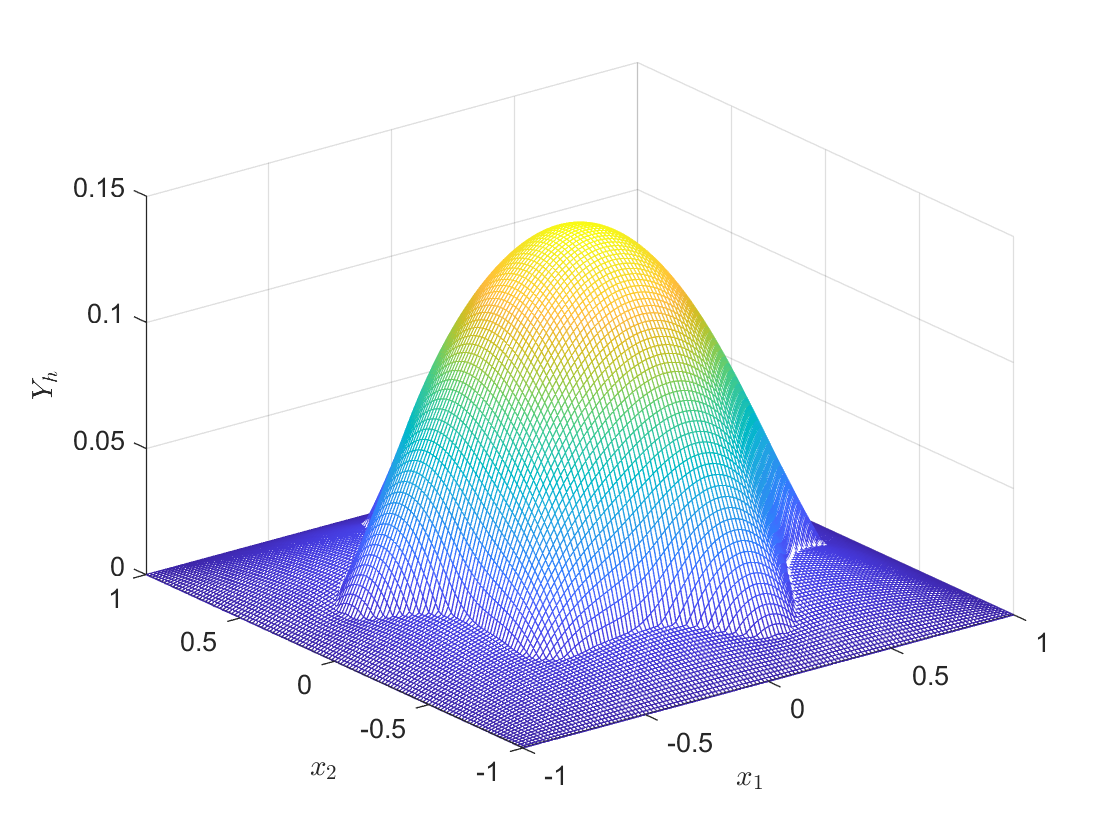}
		\subcaption*{\footnotesize (a) computed state}
		\label{chutian3}%文中引用该图片代号
	\end{subfigure}
	\centering
	\begin{subfigure}{0.325\linewidth}
		\centering
		\includegraphics [width=2in,height=2in]{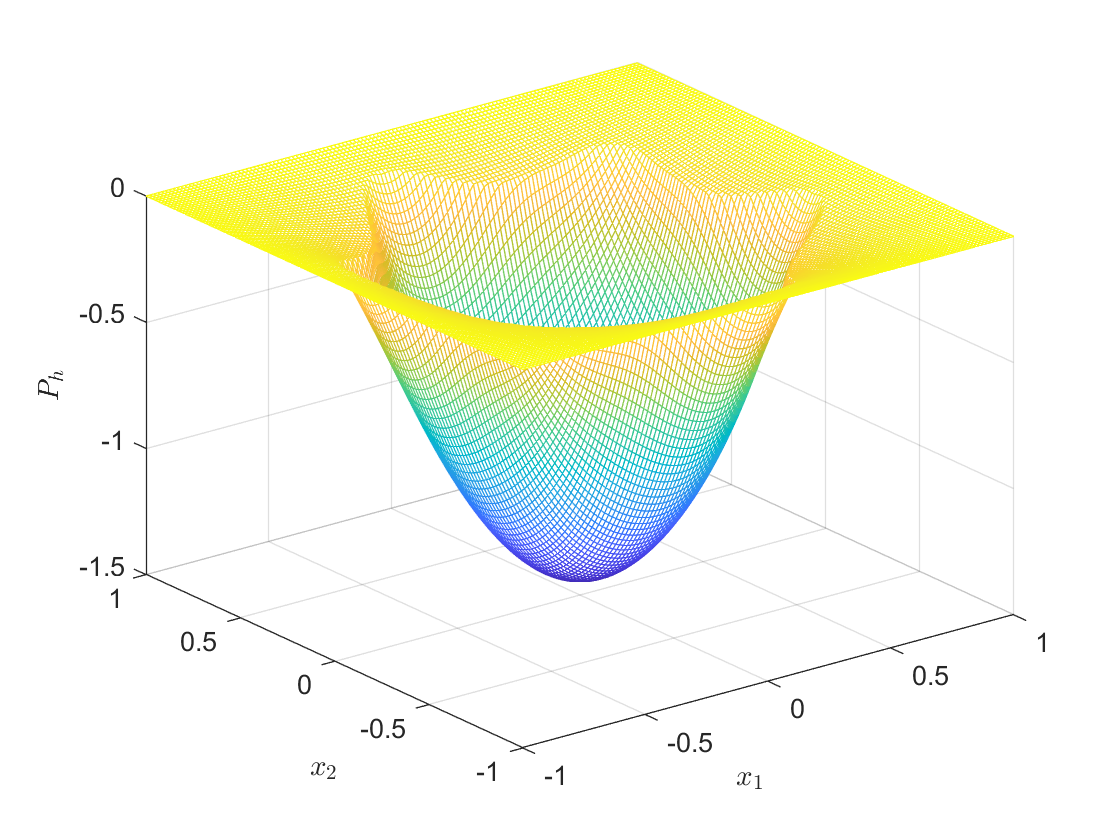}
		\subcaption*{\footnotesize (b) computed adjoint state}
		\label{chutian3}%文中引用该图片代号
	\end{subfigure}
	\centering
	\begin{subfigure}{0.325\linewidth}
		\centering
		\includegraphics [width=2in,height=2in]{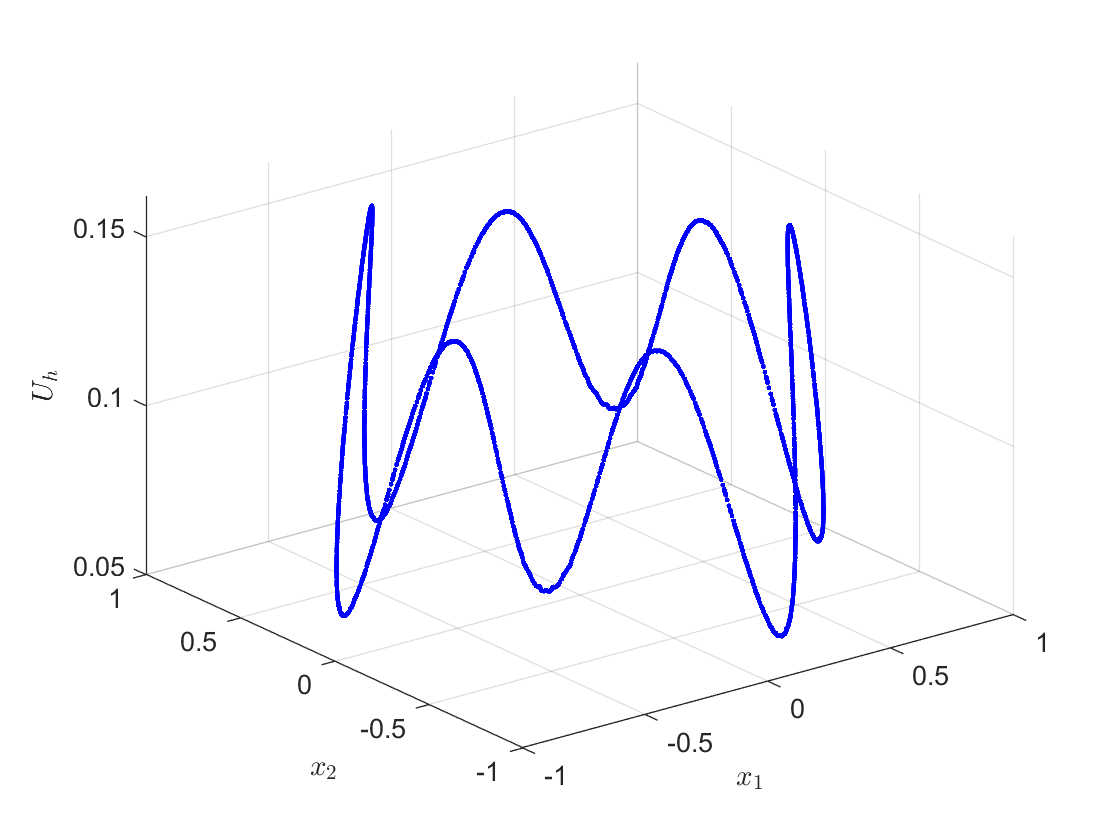}
		\subcaption*{\footnotesize (c) computed control}
		\label{chutian3}%文中引用该图片代号
	\end{subfigure}
	\caption*{\footnotesize Fig. 18. The computed state, adjoint state and control with $N=128$ and $M=4096$.}
	\label{da_chutian}
\end{figure}
\section{\bf Conclusion}
In this paper, we have developed an efficient numerical method for optimal control problems governed by parabolic interface problems. Firstly, we derive the optimality conditions for the control problem and the corresponding regularity results. Then, for the control problem, we use the stable generalized finite element method for space discretization and the backward Euler scheme for time discretization of the state and variational discretization for the control variable. Finally, we have obtained a priori error estimates for the fully discretized control problem and provided numerical experiments to support the theoretical results. This approach is a conforming method that does not require any penalty parameters or stability schemes. The method is also easy to implement and can be applied to optimal control problems involving moving interfaces.

\textbf{Acknowledgements}: This work was supported by the National Key R\&D Program of China (2022YFA1004402), the Xinjiang Uygur Autonomous Region Natural Science Fund (2022D01C409), the Natural Science Foundation of Xinjiang Uygur Autonomous Region ( 2025D14015), and the Innovation Project of Excellent Doctoral Students of  Xinjiang University (XJDX2025YJS031).


\begin{thebibliography}{60}
\bibitem{Lions1971}J. Lions, Optimal Control of Systems Governed by Partial Differential Equations, Springer, Berlin, 1971.
\bibitem{Hinze2009}M. Hinze, R. Pinnau, M. Ulbrich, S. Ulbrich, Optimization with PDE Constraints, Springer, Berlin, 2009.
\bibitem{Meyer2006}C. Meyer, P. Philip, F. Tr\"{o}ltzsch, Optimal control of a semilinear PDE with nonlocal radiation interface conditions, SIAM J. Control Optim. 45 (2006) 699-721.
\bibitem{Meyer2009}C. Meyer, I. Yousept, State-Constrained optimal control of semilinear elliptic equations with nonlocal radiation interface conditions,  SIAM J. Control Optim. 48 (2009) 734-755.
\bibitem{zhang2020}Z. Zhang, D. Liang, Q. Wang, Immersed finite element method and its analysis for parabolic optimal control problems with interfaces, Appl. Numer. Math. 147 (2020) 174-195.
\bibitem{Sprekels2021} J. Sprekels, H. Wu, Optimal distributed control of a Cahn-Hilliard-Darcy system with mass sources, Appl. Math. Optim. 83 (2021) 489-530.
\bibitem{Hinze2005} M. Hinze, A variational discretization concept in control constrained optimization: The linear-quadratic case, Comput. Optim. Appl. 30 (2005) 45-61.
\bibitem{Zhang2015}Q. Zhang, K. Ito, Z. Li, Z. Zhang, Immersed finite elements for optimal control problems of elliptic PDEs with interfaces, J. Comput. Phys. 298 (2015) 305-319.
\bibitem{Wachsmuth2016}D. Wachsmuth, J.E. Wurst, Optimal control of interface problems with hp-finite elements, Nume. Funct. Anal. Optim. 37 (2016) 363-390.
\bibitem{Wang2019} T. Wang, C. Yang, X. Xie, A Nitsche-eXtended finite element method for distributed optimal control problems of elliptic interface equations, Comput. Methods Appl. Math. 20 (2019) 379-393.
\bibitem{Gong2020}Q. Wang, Z. Zhang, E. Zheng, PPIFE method with non-homogeneous flux jump conditions and its efficient numerical solver for elliptic optimal control problems with interfaces, Numer. Math. Theor. Meth. Appl. 13 (2020) 719-744.
\bibitem{Zhang20201}Z. Zhang, D. Liang, Q. Wang, Immersed finite element method and its analysis for parabolic optimal control problems with interfaces, Appl. Numer. Math. 147 (2020) 174-195.
\bibitem{Wang2022} Q. Wang, J. Xie, Z. Zhang, Nonconforming immersed finite element method for solving elliptic optimal control problems with interfaces, Appl. Anal. 101 (2022) 2197-2216.
\bibitem{Yang2018} C. Yang, T. Wang, X. Xie, An interface-unfitted finite element method for elliptic interface optimal control problem, Numer. Math. Theor. Meth. Appl. 12 (2018) 727-749.
\bibitem{Su2023}M. Su, Z. Zhang, Numerical approximation based on immersed finite element method for elliptic interface optimal control problem, Commun. Nonlinear Sci. 120 (2023) 107195.
\bibitem{Lai2025}M.C. Lai, Y. Song, X. Yuan, H. Yue, T, Zeng, The Hard-Constraint PINNs for interface optimal control problems, SIAM J. Control Optim. 47 (2025) C601-C629.
\bibitem{Chen1998}Z. Chen, J. Zou, Finite element methods and their convergence for elliptic and parabolic interface problems, Numer. Math. 79 (1998) 175-202.
\bibitem{Huang2002}J. Huang, J. Zou, Some new a priori estimates for second-order elliptic
and parabolic interface problems, J. Differ. Equ. 184 (2002) 570-586.
\bibitem{Sinha2005}R.K. Sinha, B. Deka, Optimal error estimates for linear parabolic  problems with discontinuous coefficients, SIAM J. Numer. Anal. 43 (2005) 733-749.
\bibitem{Sinha2007}R.K. Sinha, B. Deka, An unfitted finite-element method for elliptic and parabolic interface problems, IMA J. Numer. Anal. 27 (2007) 529-549.
\bibitem{Fries2010}T.P. Fries, T. Belytschko, The extended/generalized finite element method: An overview of the method and its applications, Internat. J. Numer.
Methods Engrg. 84 (2010) 253-304.
\bibitem{Sukumar2001}N. Sukumar, D.L. Chopp, N. Mo\"{e}s, T. Belytschko, Modeling holes and inclusions by level sets in the extended finite-element method, Comput. Methods Appl. Mech. Engrg. 190  (2001) 6183-6200.
\bibitem{Babuka2012}I. Babu\v{s}ka, U. Banerjee, Stable generalized finite element method, Comput. Methods Appl. Mech. Engrg. 201-204 (2012) 91-111.
\bibitem{Babuka2017} I. Babu\v{s}ka, U. Banerjee, K. Kergrene, Strongly stable generalized finite element method: application to interface problems, Comput. Methods Appl. Mech. Engrg. 327 (2017) 58-92.
\bibitem{Zhang2019}Q. Zhang, U. Banerjee, I. Babu\v{s}ka, Strongly stable generalized finite element method (SSGFEM) for a non-smooth interface problem, Comput. Methods Appl. Mech. Engrg. 344 (2019) 538-568.
\bibitem{Gupta2015}V. Gupta, C.A. Duarte, I. Babu\v{s}ka, U. Banerjee, Stable GFEM (SGFEM): Improved conditioning and accuracy of GFEM/XFEM for three-dimensional fracture mechanics, Comput. Methods Appl. Mech. Engrg. 289 (2015) 355-386.
\bibitem{Kergrene2016} K. Kergrene, I. Babu\v{s}ka, U. Banerjee, Stable generalized finite element method and associated iterative schemes: application to interface
problems, Comput. Methods Appl. Mech. Engrg. 305 (2016) 1-36.
\bibitem{Zhang2014} Q. Zhang, U. Banerjee, I. Babu\v{s}ka, High order stable generalized finite element methods, Numer. Math. 128 (2014) 1-29.
\bibitem{Zhang2016} Q. Zhang, I. Babu\v{s}ka, U. Banerjee, Robustness in stable generalized finite element methods (SGFEM) applied to poisson problems with crack
singularities, Comput. Methods Appl. Mech. Engrg. 311 (2016) 476-502.
\bibitem{Gupta2013} V. Gupta, C.A. Duarte, I. Babu\v{s}ka, U. Banerjee, A stable and optimally convergent generalized FEM (SGFEM) for linear elastic fracture mechanics, Comput. Methods Appl. Mech. Engrg. 266 (2013) 23-39.
\bibitem{Zhang2020}Q. Zhang,  I. Babu\v{s}ka, A stable generalized finite element method (SGFEM) of degree two for interface problems, Comput. Methods Appl. Mech. Engrg. 363 (2020) 112889.
\bibitem{Zhu2020}P. Zhu, Q. Zhang, T. Liu, Stable generalized finite element method (SGFEM) for parabolic interface problems, J. Comput. Appl. Math. 367 (2020) 112475.
\bibitem{Jo2021}G. Jo, D.Y. Kwak, Y.J. Lee, Locally conservative immersed finite element method for elliptic interface problems, J. Sci. Comput. 87 (2021) 60.
\bibitem{Guo2018} R. Guo, T. Lin, X. Zhang, Nonconforming immersed finite element spaces
for elliptic interface problems, Comput. Math. Appl. 75 (2018) 2002-2016.
\bibitem{Amann2021}H. Amann, Maximal regularity of parabolic transmission problems, J. Evol. Equ. 21 (2021) 3375-3420.
\bibitem{Pruss2016}J. Pr\"{u}ss, G. Simonett, Moving Interfaces and Quasilinear Parabolic Evolution Equations, Birkh\"{a}user, Basel, 2016.
%\bibitem{Hao2022}C. Hao, W. Zhang, Maximal $L^p$-$L^q$ regularity for two-phase fluid motion in the linearized Oberbeck-Boussinesq approximation, J. Differ. Equ. 322 (2022) 101-134.
\bibitem{Weidemaier2002}P. Weidemaier, Maximal regularity for parabolic equations with inhomogeneous boundary conditions in Sobolev spaces with mixed $L_{p}$-norm, Electr Res. Announc. Am. Math. Soc. 8 (2002) 47-51.
%\bibitem{Khan2018}A. Khan, C.S. Upadhyay, M. Gerritsma, Spectral element method for parabolic interface problems, Comput. Methods Appl. Mech. Engrg. 337 (2018) 66-94.
\end{thebibliography}
\end{document}